 \newtheorem{remark}{Remark}
 \newtheorem{lemma}[remark]{Lemma}
 \newtheorem{theorem}[remark]{Theorem}
 \newtheorem{proposition}[remark]{Proposition}
 \newtheorem{corollary}[remark]{Corollary}
\title{On the strong metric dimension of Cartesian and direct products of graphs}
\author{Juan A. Rodr\'{\i}guez-Vel\'{a}zquez$^{(1)}$, Ismael G. Yero$^{(2)}$,\\
Dorota Kuziak$^{(1)}$ and Ortrud R. Oellermann$^{(3)}$\\
$^{(1)}${\small Departament d'Enginyeria Inform\`atica i Matem\`atiques,}\\
{\small Universitat Rovira i Virgili,}  {\small Av. Pa\"{\i}sos
Catalans 26, 43007 Tarragona, Spain.} \\{\small
juanalberto.rodriguez\@@urv.cat, dorota.kuziak\@@urv.cat}
\\
$^{(2)}${\small Departamento de Matem\'aticas, Escuela Polit\'ecnica Superior de Algeciras}\\
{\small Universidad de C\'adiz,} {\small
Av. Ram\'on Puyol s/n, 11202 Algeciras, Spain.} \\ {\small
ismael.gonzalez\@@uca.es}\\
$^{(3)}${\small Department of Mathematics and Statistics, University of Winnipeg}\\{\small Winnipeg, MB R3B 2E9, Canada.}\\{\small o.oellermann@uwinnipeg.ca}\\}
\begin{document}
\maketitle

\begin{abstract}
Let $G$ be a connected graph. A vertex $w$ {\em strongly resolves} a pair $u, v$ of vertices of $G$ if there exists some shortest
$u-w$ path containing $v$ or some shortest $v-w$ path containing $u$. A set $W$ of vertices is a {\em strong resolving set} for $G$ if every pair of
vertices of $G$ is strongly resolved by some vertex of $W$. The smallest cardinality of a strong resolving set for $G$ is called the {\em strong metric dimension} of $G$.
It is known  that the problem of computing the strong metric
dimension of a graph is NP-hard. In this paper we obtain closed formulae for the strong metric dimension of several families of Cartesian product graphs and direct product graphs.
\end{abstract}

{\it Keywords:} Strong resolving set; strong metric dimension; Cartesian product graph; direct product graph, strong resolving graph.

{\it AMS Subject Classification Numbers:}   05C12; 05C76; 05C90.

\section{Introduction}

A {\em generator} of a metric space is a set $S$ of points in the space with the property that every point of the space is uniquely determined by its distances from the elements of $S$. Given a simple and connected graph $G=(V,E)$, we consider the metric $d_G:V\times V\rightarrow \mathbb{R}^+$, where $d_G(x,y)$ is the length of a shortest path between $u$ and $v$. $(V,d_G)$ is clearly a metric space. A vertex $v\in V$ is said to distinguish two vertices $x$ and $y$ if $d_G(v,x)\ne d_G(v,y)$.
A set $S\subset V$ is said to be a \emph{metric generator} for $G$ if any pair of vertices of $G$ is
distinguished by some element of $S$. A minimum generator is called a \emph{metric basis}, and
its cardinality the \emph{metric dimension} of $G$, denoted by $dim(G)$. Motivated by the problem of uniquely determining the location of an intruder in a network, the concept of metric
dimension of a graph was introduced by Slater in \cite{leaves-trees,slater2}, where the metric generators were called \emph{locating sets}. The concept of metric dimension of a graph was introduced independently by Harary and Melter in \cite{harary}, where metric generators were called \emph{resolving sets}. Applications of this invariant to the navigation of robots in networks are discussed in \cite{landmarks} and applications to chemistry in \cite{pharmacy1,pharmacy2}.  This invariant was studied further in a number of other papers including for example, \cite{pelayo1, chappell,chartrand,chartrand2, fehr,haynes,Tomescu1,LocalMetric,survey,tomescu,yerocartpartres,CMWA}.  Several variations of metric generators including resolving dominating sets \cite{brigham}, independent resolving sets \cite{chartrand1}, local metric sets \cite{LocalMetric}, and strong resolving sets \cite{seb}, etc. have since been introduced and studied.

In this article we are interested in the study  of strong resolving sets \cite{Oellermann,seb}.
For two vertices $u$ and $v$ in a connected graph $G$, the interval $I_G [u, v]$ between $u$ and $v$ is defined as the collection of all vertices that belong to some shortest $u-v$ path. A vertex $w$ strongly resolves two vertices $u$ and $v$ if $v\in  I_G [u,w]$ or  $u\in I_G [v,w]$. A set $S$ of vertices in a connected graph $G$ is a \emph{strong resolving set} for $G$ if every two vertices of $G$ are strongly resolved by some vertex
of $S$. The smallest cardinality of a strong resolving set of $G$ is called \emph{strong metric dimension} and is denoted by $dim_s(G)$.  So, for example, $dim_s(G)=n-1$ if and only if $G$ is the complete graph  of order $n$.
For the cycle $C_n$ of order $n$ the strong dimension is $dim_s(C_n) = \lceil n/2\rceil$  and if $T$ is a tree with $l(T)$ leaves, its strong metric dimension equals  $l(T)-1$   (see \cite{seb}).  We say that a strong resolving set for $G$ of cardinality $dim_s(G)$ is a \emph{strong metric basis} of $G$.

A vertex $u$ of $G$ is \emph{maximally distant} from $v$ if for every vertex $w$ in the open neighborhood of $u$, $d_G(v,w)\le d_G(u,v)$. If $u$ is maximally distant from $v$ and $v$ is maximally distant from $u$, then we say that $u$ and $v$ are \emph{mutually maximally distant}.  The {\em boundary} of $G=(V,E)$ is defined as $\partial(G) = \{u\in V:$ there exists $v\in V$  such that $u,v$  are mutually maximally distant$\}$. For some basic graph classes, such as complete graphs $K_n$, complete bipartite graphs $K_{r,s}$,  cycles $C_n$ and hypercube graphs $Q_k$, the boundary is simply the whole vertex set.
It is not difficult to see that this property holds for all  $2$-antipodal\footnote{The diameter of $G=(V,E)$ is defined as $D(G)=\max_{u,v\in V}\{d(u,v)\}$.  We recall that $G=(V,E)$ is $2$-antipodal if for each vertex $x\in V$ there exists exactly one vertex $y\in V$ such that $d_G(x,y)=D(G)$.} graphs and also for all distance-regular graphs.
Notice that the boundary of a tree consists exactly of the set of its leaves. A vertex  of a graph  is a {\em simplicial vertex} if the subgraph induced by  its neighbors is a complete graph. Given a graph $G$, we denote by $\varepsilon(G)$ the set of simplicial vertices of $G$.
Notice that $\sigma(G)\subseteq \partial(G)$.

We use the notion of strong resolving graph introduced in \cite{Oellermann}. The \emph{strong resolving graph}\footnote{In fact, according to \cite{Oellermann} the strong resolving graph $G'_{SR}$ of a graph $G$ has vertex set $V(G'_{SR})=V(G)$ and two vertices $u,v$ are adjacent in $G'_{SR}$ if and only if $u$ and $v$ are mutually maximally distant in $G$. So, the strong resolving graph defined here is a subgraph of the strong resolving graph defined in \cite{Oellermann} and can be obtained from the latter graph by deleting its isolated vertices.} of $G$ is a graph $G_{SR}$  with vertex set $V(G_{SR}) = \partial(G)$ where two vertices $u,v$ are adjacent in $G_{SR}$ if and only if $u$ and $v$ are mutually maximally distant in $G$.

There are some families of graph for which its resolving graph can be obtained relatively easily. For instance, we emphasize the following cases.
\begin{itemize}
\item If $\partial(G)=\sigma(G)$, then $G_{SR}\cong K_{\partial(G)}$. In particular, $(K_n)_{SR}\cong K_n$ and for any tree $T$ with $l(T)$ leaves, $(T)_{SR}\cong K_{l(T)}$.

\item For any $2$-antipodal graph $G$ of order $n$, $G_{SR}\cong \bigcup_{i=1}^{\frac{n}{2}} K_2$. In particular,  $(C_{2k})_{SR}\cong \bigcup_{i=1}^{k} K_2$.

\item $(C_{2k+1})_{SR}\cong C_{2k+1}$.
\end{itemize}

A set $S$ of vertices of $G$ is a \emph{vertex cover} of $G$ if every edge of $G$ is incident with at least one vertex of $S$. The \emph{vertex cover number} of $G$, denoted by $\alpha(G)$, is the smallest cardinality of a vertex cover of $G$. We refer to an $\alpha(G)$-set in a graph $G$ as a vertex cover  of cardinality $\alpha(G)$. Oellermann and Peters-Fransen \cite{Oellermann} showed that the problem of finding the strong metric dimension of a connected graph $G$ can be transformed to the problem of finding the vertex cover number of $G_{SR}$.

\begin{theorem}{\em \cite{Oellermann}}\label{lem_oellerman}
For any connected graph $G$,
$dim_s(G) = \alpha(G_{SR}).$
\end{theorem}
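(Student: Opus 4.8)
\medskip
\noindent\textbf{Proof proposal.}
The plan is to establish the two inequalities $dim_s(G)\ge\alpha(G_{SR})$ and $dim_s(G)\le\alpha(G_{SR})$ separately. Both directions rest on the following elementary fact about a pair $u,v$ of mutually maximally distant (MMD) vertices of $G$: if a vertex $w$ strongly resolves $\{u,v\}$, then $w\in\{u,v\}$. Indeed, if $v\in I_G[u,w]$ with $w\ne v$, then choosing a neighbour $v'$ of $v$ on a shortest $v$--$w$ path yields $d_G(u,v')=d_G(u,v)+1$, which contradicts that $v$ is maximally distant from $u$; the case $u\in I_G[v,w]$ is symmetric. (If $|V(G)|=1$ both quantities are $0$, so we may assume $|V(G)|\ge2$, in which case no vertex is maximally distant from itself.)

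For the lower bound, let $S$ be a strong metric basis of $G$. Given any edge $uv$ of $G_{SR}$, the pair $\{u,v\}$ is strongly resolved by some $w\in S$, and the fact above forces $w\in\{u,v\}$; hence $S\cap V(G_{SR})$ is a vertex cover of $G_{SR}$, and $dim_s(G)=|S|\ge|S\cap V(G_{SR})|\ge\alpha(G_{SR})$.

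For the upper bound, let $W$ be a minimum vertex cover of $G_{SR}$; I would show that $W$ is a strong resolving set for $G$. The key tool is a path-extension lemma: for all $a,b\in V(G)$ there exists a vertex $c$ that is maximally distant from $a$ and satisfies $b\in I_G[a,c]$. One constructs $c$ by starting at $b$ and repeatedly stepping to a neighbour whose distance to $a$ is strictly larger; finiteness of $V(G)$ forces termination, and a one-line triangle-inequality check shows that the relation $b\in I_G[a,\cdot]$ is preserved at each step. Now fix distinct $x,y\in V(G)$. Apply the lemma to obtain $u$ maximally distant from $y$ with $x\in I_G[y,u]$, and apply it again to obtain $v$ maximally distant from $u$ with $y\in I_G[u,v]$; note $u\ne y$ and $v\ne u$. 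Combining the two distance identities gives $d_G(u,v)=d_G(u,x)+d_G(x,y)+d_G(y,v)$, from which two further short computations yield: (i) $u$ is maximally distant from $v$ (using that $u$ is maximally distant from $y$ together with the triangle inequality), so $u,v$ are MMD and $uv\in E(G_{SR})$; and (ii) $y\in I_G[x,v]$. Since $W$ covers the edge $uv$, either $u\in W$, and then $u$ strongly resolves $\{x,y\}$ because $x\in I_G[y,u]$, or $v\in W$, and then $v$ strongly resolves $\{x,y\}$ because $y\in I_G[x,v]$. Hence every pair of vertices of $G$ is strongly resolved by a member of $W$, so $dim_s(G)\le|W|=\alpha(G_{SR})$.

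I expect the upper bound to be the main obstacle: it is not clear a priori that covering only the MMD pairs (the edges of $G_{SR}$) suffices to resolve all pairs of $G$. The double use of the extension lemma is what makes it go through, effectively sandwiching an arbitrary pair $x,y$ along a shortest path between two MMD vertices $u,v$ so that whichever of $u,v$ the cover $W$ contains is already a strong resolver of $\{x,y\}$. The only delicate point is verifying part (i), that $u$ is maximally distant from $v$, and that reduces to a single application of the triangle inequality.
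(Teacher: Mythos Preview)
The paper does not prove this theorem at all: it is quoted from \cite{Oellermann} and used as a black box, so there is no ``paper's own proof'' to compare against. Your argument is correct and is essentially the standard proof from the cited reference. The two halves are exactly the right ones: for $dim_s(G)\ge\alpha(G_{SR})$ one observes that an MMD pair can only be strongly resolved by one of its two endpoints, so any strong resolving set covers $G_{SR}$; for $dim_s(G)\le\alpha(G_{SR})$ one shows that an arbitrary pair $x,y$ can be extended to an MMD pair $u,v$ with $x,y\in I_G[u,v]$, so whichever of $u,v$ lies in the cover already strongly resolves $x,y$. Your double application of the path-extension lemma and the triangle-inequality check that $u$ is maximally distant from $v$ (via $d_G(v,u')\le d_G(v,y)+d_G(y,u')\le d_G(v,y)+d_G(y,u)=d_G(v,u)$) are the crux, and they are handled cleanly.
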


It was shown in \cite{Oellermann}  that the problem of computing $dim_s(G)$ is NP-hard. This suggests finding the strong metric dimension for special classes of graphs or obtaining good bounds on this invariant. An efficient procedure for finding the strong metric dimension of distance hereditary graphs was described in \cite{may-oellermann}. In this paper we study the problem of finding exact values or sharp bounds for the strong metric dimension of Cartesian and direct products of graphs and express these in terms of invariants of the factor graphs.

\section{The strong metric dimension of Cartesian products of graphs}
We recall that the \emph{Cartesian product of two graphs} $G=(V_1,E_1)$ and
$H=(V_2,E_2)$ is the graph $G\Box H$, such that
$V(G\Box H)=V_1\times V_2$ and two vertices $(a,b),(c,d)$ are adjacent in  $G\Box H$ if and only if, either ($a=c$ and $bd\in E_2$)
or ($b=d$ and $ac\in E_1$).

The following result will be useful to study the  relationship between  the strong resolving graph of $G\Box H$ and the strong resolving graphs of $G$ and $H$.

\begin{lemma}{\em \cite{bres-klav-tepeh}}\label{lem_bound}
For any graphs $G$ and $H$,
$\partial(G\square H)=\partial(G)\times \partial(H).$
\end{lemma}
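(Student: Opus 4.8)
The plan is to reduce the statement to the additive distance formula for the Cartesian product, $d_{G\square H}((a,b),(c,d)) = d_G(a,c) + d_H(b,d)$, together with the elementary observation that the open neighborhood of a vertex $(a,b)$ in $G\square H$ is the disjoint union $\{(a',b) : a'\in N_G(a)\}\cup\{(a,b') : b'\in N_H(b)\}$ of a "$G$-part" and an "$H$-part."

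The core step I would establish first is the following characterization of maximal distance in $G\square H$: the vertex $(a,b)$ is maximally distant from $(c,d)$ if and only if $a$ is maximally distant from $c$ in $G$ \emph{and} $b$ is maximally distant from $d$ in $H$. For the forward implication, suppose some neighbor $a'$ of $a$ in $G$ had $d_G(a',c) > d_G(a,c)$; then $(a',b)$ is a neighbor of $(a,b)$ in $G\square H$ with $d_{G\square H}((a',b),(c,d)) = d_G(a',c)+d_H(b,d) > d_G(a,c)+d_H(b,d) = d_{G\square H}((a,b),(c,d))$, contradicting that $(a,b)$ is maximally distant from $(c,d)$; the same argument applied to a neighbor $b'$ of $b$ handles the second coordinate. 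The converse is the same computation read backwards: every neighbor of $(a,b)$ has one of the two forms above, and in each case the hypothesis forces its distance to $(c,d)$ to be at most $d_{G\square H}((a,b),(c,d))$, because the condition $d_G(a',c)+d_H(b,d)\le d_G(a,c)+d_H(b,d)$ decouples into the purely-$G$ inequality $d_G(a',c)\le d_G(a,c)$ (and symmetrically for $H$).

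Applying this characterization in both directions immediately yields that $(a,b)$ and $(c,d)$ are mutually maximally distant in $G\square H$ if and only if $a,c$ are mutually maximally distant in $G$ and $b,d$ are mutually maximally distant in $H$. From this, both inclusions follow directly. If $(a,b)\in\partial(G\square H)$, choose $(c,d)$ mutually maximally distant with it; then $a$ is witnessed to lie in $\partial(G)$ and $b$ in $\partial(H)$, giving $\partial(G\square H)\subseteq\partial(G)\times\partial(H)$. Conversely, given $a\in\partial(G)$ and $b\in\partial(H)$, pick $c$ with $a,c$ mutually maximally distant in $G$ and $d$ with $b,d$ mutually maximally distant in $H$; then $(a,b)$ and $(c,d)$ are mutually maximally distant in $G\square H$, so $(a,b)\in\partial(G\square H)$.

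I do not expect a genuine obstacle here: the only point that requires a moment's care is the "decoupling" remark — that because the product metric is additive and a neighbor of $(a,b)$ changes only one coordinate, the defining inequality for maximal distance separates into an inequality living entirely in $G$ and one living entirely in $H$. Once that is isolated, the rest is bookkeeping with the definitions of maximally distant, mutually maximally distant, and boundary.
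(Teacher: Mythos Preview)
Your argument is correct. Note, however, that the paper does not actually prove this lemma: it is quoted from \cite{bres-klav-tepeh} and used as a black box. That said, the core equivalence you isolate --- that $(a,b)$ and $(c,d)$ are mutually maximally distant in $G\square H$ if and only if $a,c$ are mutually maximally distant in $G$ and $b,d$ are mutually maximally distant in $H$ --- is proved by the authors inside the proof of Theorem~\ref{cartesian_directed} (the equivalence (iii)$\Leftrightarrow$(iv) there), and their computation is line-for-line the same as yours: they use the additivity $d_{G\square H}((x,a),(z,b))=d_G(x,z)+d_H(a,b)$ together with the fact that a neighbor of $(y,b)$ moves only one coordinate. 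So your proof is both correct and in the same spirit as what the paper does; the only difference is organizational, in that you package the maximal-distance characterization as a standalone step and then read off both inclusions for $\partial(G\square H)$, whereas the paper invokes the boundary equality from the literature and only later reproves the mutual-maximal-distance equivalence where it is needed.
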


The \emph{direct product of two graphs} $G=(V_1,E_1)$ and
$H=(V_2,E_2)$ is the graph $G\times H$, such that
$V(G\times H)=V_1\times V_2$ and two vertices $(a,b),(c,d)$
 are adjacent in  $G\times H$ if and only if $ac\in E_1$ and $bd\in E_2$.

\begin{theorem}\label{cartesian_directed}
Let $G$ and $H$ be two connected graphs. Then $(G\square H)_{SR}\cong G_{SR}\times H_{SR}$.
\end{theorem}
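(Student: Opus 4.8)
The plan is to show that $(G\square H)_{SR}$ and $G_{SR}\times H_{SR}$ have the same vertex set and the same edge set. For the vertices, I would invoke Lemma~\ref{lem_bound}, which gives $V((G\square H)_{SR})=\partial(G\square H)=\partial(G)\times\partial(H)=V(G_{SR})\times V(H_{SR})=V(G_{SR}\times H_{SR})$, so it remains to compare adjacencies. (Throughout one may assume $G$ and $H$ are nontrivial, since for a one-vertex factor the strong resolving graph is empty and the statement degenerates.)

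The heart of the argument is the following claim, which I would prove first: for $(a,b),(c,d)\in V(G)\times V(H)$, the pair $(a,b),(c,d)$ is mutually maximally distant in $G\square H$ if and only if $a,c$ are mutually maximally distant in $G$ and $b,d$ are mutually maximally distant in $H$. To establish it I would use the two standard facts about the Cartesian product: $d_{G\square H}((x,y),(x',y'))=d_G(x,x')+d_H(y,y')$, and $N_{G\square H}((a,b))=(N_G(a)\times\{b\})\cup(\{a\}\times N_H(b))$. With these, the assertion that $(a,b)$ is maximally distant from $(c,d)$ unpacks into: for every $a'\in N_G(a)$, $d_G(c,a')+d_H(d,b)\le d_G(c,a)+d_H(d,b)$, and for every $b'\in N_H(b)$, $d_G(c,a)+d_H(d,b')\le d_G(c,a)+d_H(d,b)$. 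Cancelling the common summand in each inequality, this is exactly the conjunction ``$a$ is maximally distant from $c$ in $G$ and $b$ is maximally distant from $d$ in $H$''. Interchanging the roles of $(a,b)$ and $(c,d)$ and taking the conjunction of the two resulting statements yields the claim.

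Finally I would pass from the claim to adjacency in the strong resolving graphs. If $(a,b)$ and $(c,d)$ are adjacent in $(G\square H)_{SR}$ then they are distinct and mutually maximally distant in $G\square H$, so by the claim $a,c$ are mutually maximally distant in $G$ and $b,d$ are mutually maximally distant in $H$; since in a nontrivial connected graph no vertex is maximally distant from itself (it has a neighbor), $a\ne c$ and $b\ne d$, whence $ac\in E(G_{SR})$ and $bd\in E(H_{SR})$, i.e., $(a,b)(c,d)\in E(G_{SR}\times H_{SR})$. Conversely, if $(a,b)(c,d)\in E(G_{SR}\times H_{SR})$ then $ac\in E(G_{SR})$ and $bd\in E(H_{SR})$, so $a,c$ and $b,d$ are mutually maximally distant pairs in $G$ and $H$ respectively, and the claim gives that $(a,b),(c,d)$ are mutually maximally distant (and clearly distinct) in $G\square H$, hence adjacent in $(G\square H)_{SR}$. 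Thus the edge sets coincide and $(G\square H)_{SR}\cong G_{SR}\times H_{SR}$. I expect no substantial obstacle here: the only care needed is doing the neighborhood/distance bookkeeping cleanly and disposing of the degenerate situation in which a coordinate repeats.
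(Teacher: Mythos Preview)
Your proposal is correct and follows essentially the same route as the paper: both identify the vertex sets via Lemma~\ref{lem_bound} and then reduce the edge comparison to the equivalence ``$(a,b),(c,d)$ mutually maximally distant in $G\square H$ iff $a,c$ and $b,d$ are mutually maximally distant in $G$ and $H$,'' proved using the distance formula $d_{G\square H}=d_G+d_H$ and the neighborhood description of the Cartesian product. Your write-up is slightly tidier in that you cancel the common summand once rather than checking neighbor types separately, and you explicitly dispose of the $a=c$ or $b=d$ degeneracy, which the paper leaves implicit.
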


\begin{proof}
By definition of strong resolving graph and Lemma \ref{lem_bound} we have
 $$V((G\square H)_{SR})=\partial (G\square H)=\partial(G)\times \partial(H)=V(G_{SR}\times H_{SR}).$$
 Moreover, given $x,y\in \partial(G)$ and $a,b\in \partial(H)$ the following assertions are equivalent:
 \begin{enumerate}[(i)]
  \item  $(x,a), (y,b)$ are adjacent in $G_{SR}\times H_{SR}$.
  \item $x,y$ are adjacent in $G_{SR}$ and $a,b$ are adjacent in $H_{SR}$.
  \item $x,y$ are mutually maximally distant in $G$ and $a,b$ are mutually maximally distant in $H$.
  \item  $(x,a),(y,b)$ are mutually maximally distant in $G\square H$.
  \item  $(x,a),(y,b)$ are adjacent in $(G\square H)_{SR}$.
 \end{enumerate}
 We only need to show that (iii) $\Leftrightarrow$ (iv). Let  $x,y$ be two mutually maximally distant vertices in $G$ and let $a,b$ be two mutually maximally distant vertices in $H$. If $(z,b)$ is adjacent to $(y,b)$ in $G\square H$, then $z$ is adjacent to $y$ in $G$. Thus,
 $$d_{G\square H}((x,a),(z,b))=d_G(x,z)+d_H(a,b)\le d_G(x,y)+d_H(a,b)=d_{G\square H}((x,a),(y,b)).$$
 Analogously we see that if $(y,c)$ adjacent to $(y,b)$ in $G\square H$, then $d_{G\square H}((x,a),(y,c))\le d_{G\square H}((x,a),(y,b)).$ Thus, $(y,b)$ is maximally distant from $(x,a)$.
 Analogously it can be shown that $(x,a)$ is maximally distant from $(y,b)$. Hence, (iii)$\Rightarrow$(iv).

Now, let $(x,a),(y,b)$ be two  mutually maximally distant vertices in $G\square H$. Suppose that there exists $z$ belonging to the open neighborhood of $y$ such that $d_G(x,y)<d_G(x,z)$. In this case $(z,b)$ belongs to the open neighborhood of $(y,b)$ and  $$d_{G\square H}((x,a),(y,b))=d_G(x,y)+d_H(a,b)< d_G(x,z)+d_H(a,b)=d_{G\square H}((x,a),(z,b)),$$
 which is a contradiction. Hence, $y$ is maximally distant from $x$ in $G$. Analogously it can be shown that  $x$ is maximally distant from $y$. By symmetry the vertices $a,b$ are mutually maximally distant in $H$. Therefore, (iv)$\Rightarrow$(iii).
\end{proof}

The following result, which is a direct consequence of Theorem \ref{lem_oellerman} and Theorem \ref{cartesian_directed}, will be the main tool of this section.
\begin{theorem}\label{eq_oellerman_direct}
Let $G$ and $H$ be two connected graphs. Then $dim_s(G\square H) = \alpha (G_{SR}\times H_{SR})$.
\end{theorem}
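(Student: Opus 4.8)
The plan is to obtain the formula by simply composing the two results already in hand, so the ``proof'' is really a two-step deduction. First I would invoke Theorem~\ref{lem_oellerman}, applied to the graph $G\square H$: since $G$ and $H$ are connected their Cartesian product is connected, so the theorem gives $dim_s(G\square H) = \alpha((G\square H)_{SR})$. Next I would apply Theorem~\ref{cartesian_directed}, which provides the graph isomorphism $(G\square H)_{SR}\cong G_{SR}\times H_{SR}$.

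The only remaining ingredient is the elementary observation that the vertex cover number $\alpha$ is a graph isomorphism invariant: an isomorphism is an edge-preserving bijection, so it sends a vertex cover of one graph to a vertex cover of the isomorphic graph of the same cardinality, and hence sends a minimum one to a minimum one. Consequently $\alpha((G\square H)_{SR}) = \alpha(G_{SR}\times H_{SR})$, and chaining the two equalities gives $dim_s(G\square H) = \alpha(G_{SR}\times H_{SR})$, as claimed.

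There is no genuine obstacle here; all the real content was already absorbed into the proof of Theorem~\ref{cartesian_directed} — in particular the equivalence (iii)~$\Leftrightarrow$~(iv), where mutual maximal distance in $G\square H$ was tied to mutual maximal distance in the factors $G$ and $H$. The only points deserving a line of care are that $G\square H$ inherits connectivity from $G$ and $H$ (a standard fact), so that Theorem~\ref{lem_oellerman} is applicable, and that $\alpha$ respects the isomorphism of Theorem~\ref{cartesian_directed}. Once these are noted, the statement follows immediately.
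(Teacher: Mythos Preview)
Your argument is correct and is exactly the deduction the paper intends: the statement is recorded there without a written proof, simply as ``a direct consequence of Theorem~\ref{lem_oellerman} and Theorem~\ref{cartesian_directed},'' which is precisely the two-step chain you spell out. Your extra remarks on connectivity of $G\square H$ and isomorphism-invariance of $\alpha$ are the natural bookkeeping details and do not depart from the paper's approach.
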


Now we consider some cases in which we can compute  $\alpha (G_{SR}\times H_{SR})$. To begin with we recall the following well-known result.

\begin{theorem}{\rm (K\"{o}nig, Egerv\'{a}ry, 1931)} For bipartite graphs the size of a maximum matching
equals the size of a minimum vertex cover.
\end{theorem}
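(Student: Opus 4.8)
The stated equality is the classical König--Egerv\'ary theorem, so the plan is to sketch the standard constructive argument via maximum matchings and alternating paths, which is the most self-contained route. Throughout, let $G$ be bipartite with bipartition $(A,B)$ and write $\nu(G)$ for the maximum size of a matching. One inequality needs no bipartiteness: if $M$ is any matching and $S$ any vertex cover of any graph, then each edge of $M$ contains a vertex of $S$, and distinct edges of $M$ are vertex-disjoint, so the chosen cover-vertices are distinct and $|M|\le |S|$; taking $M$ of maximum size and $S$ of minimum size gives $\nu(G)\le\alpha(G)$. It therefore remains to exhibit a vertex cover of size at most $\nu(G)$.

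First I would fix a maximum matching $M$, let $U\subseteq A$ be the set of $M$-unsaturated vertices of $A$, and let $Z$ be the set of all vertices reachable from $U$ by an $M$-alternating path (the trivial paths at vertices of $U$ included), i.e.\ a path whose first edge lies in $E(G)\setminus M$ and whose edges alternate between $E(G)\setminus M$ and $M$. A parity bookkeeping observation is recorded once and used repeatedly: on such a path every vertex of $A$ other than the starting vertex is reached along a matching edge, and every vertex of $B$ is reached along a non-matching edge; consequently each $a\in Z\cap A$ is either in $U$ or $M$-saturated, and each $b\in Z\cap B$ is reached via a non-matching edge from some vertex of $Z\cap A$. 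I then set $S:=(A\setminus Z)\cup(B\cap Z)$ and claim $S$ is a vertex cover with $|S|\le |M|$; combined with the previous paragraph this yields $\alpha(G)\le |S|\le |M|=\nu(G)\le\alpha(G)$, forcing equality.

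Next I would verify the two properties of $S$. \textbf{$S$ covers every edge.} An edge $ab$ with $a\in A$, $b\in B$ escapes $S$ only if $a\in Z$ and $b\notin Z$; if $a\in U$ then $ab$ itself is a non-matching alternating path, and if $a\in Z\setminus U$ then $a$ is reached by an alternating path along a matching edge and $ab\notin M$ (as $a$'s unique matching partner lies in $Z$ while $b$ does not), so in either case the path extends to $b$ and $b\in Z$, a contradiction. \textbf{$|S|\le |M|$.} Every vertex of $S$ is $M$-saturated: a vertex of $A\setminus Z$ cannot be unsaturated (else it would lie in $U\subseteq Z$), and a vertex $b\in B\cap Z$ cannot be unsaturated (else the alternating $U$--$b$ path would be $M$-augmenting, contradicting the maximality of $M$). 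Moreover no edge of $M$ has both ends in $S$: such an edge would have the form $ab$ with $a\in A\setminus Z$ and $b\in B\cap Z$, but then appending the matching edge $ba$ to an alternating $U$--$b$ path places $a\in Z$, a contradiction. Hence the map sending $v\in S$ to the unique edge of $M$ incident with $v$ is injective, so $|S|\le|M|$.

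The main obstacle, and the only point where bipartiteness and the maximality of $M$ are genuinely used, is this last step: showing that the alternating forest grown from $U$ cannot ``leak'' out of $S$ without either producing an augmenting path or dragging a new vertex into $Z$. The delicate part is keeping the alternation parity straight along these paths; once that is in hand, the three contradictions above are routine. (Alternative derivations --- from Hall's marriage theorem, from the max-flow/min-cut theorem applied to a suitable unit-capacity network, or from total unimodularity of the bipartite incidence matrix together with LP duality --- are available, but the alternating-path proof above avoids invoking any outside machinery.)
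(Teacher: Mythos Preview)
Your argument is a correct and complete proof of the K\"onig--Egerv\'ary theorem via the standard alternating-path construction; the parity bookkeeping and the three contradictions are all in order. However, the paper does not actually prove this statement: it merely records the theorem as a classical 1931 result and then invokes it as a tool (together with Theorem~\ref{eq_oellerman_direct}) in the proofs of Theorems~\ref{theoremMatching} and~\ref{theoremMatching2}. So there is nothing to compare against --- your proposal supplies a proof where the paper intentionally gives none.
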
\label{konig-egervary}

We will use K\"{o}nig-Egerv\'{a}ry's theorem to prove the following result.

\begin{theorem}\label{theoremMatching}
Let $G$ and $H$ be two connected graphs such that $H_{SR}$ is bipartite with a perfect matching.
Let $G_i$, $i\in \{1,...,k\}$, be the connected components of $G_{SR}$. If for each $i\in \{1,...,k\}$, $G_i$ is Hamiltonian or $G_i$ has a perfect matching, then
$$dim_s(G\square H)=\frac{|\partial(G)||\partial(H)|}{2}.$$
\end{theorem}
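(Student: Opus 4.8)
The plan is to invoke Theorem~\ref{eq_oellerman_direct} and compute $\alpha(G_{SR}\times H_{SR})$ by exhibiting a perfect matching of the product graph; since a perfect matching of a graph on $N$ vertices forces the vertex cover number to be at least $N/2$, while König--Egerv\'ary (plus the fact that the product will turn out bipartite) gives the matching upper bound, we obtain $\alpha(G_{SR}\times H_{SR})=|V(G_{SR})||V(H_{SR})|/2 = |\partial(G)||\partial(H)|/2$. The first step is to record that $|V(G_{SR})\times V(H_{SR})|=|\partial(G)||\partial(H)|$, using that $V(G_{SR})=\partial(G)$ and $V(H_{SR})=\partial(H)$, and to observe that the hypotheses force this number to be even (each component $G_i$ has even order, being Hamiltonian of order $\ge 3$ or having a perfect matching; and $H_{SR}$ has a perfect matching). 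So it suffices to produce a perfect matching of $G_{SR}\times H_{SR}$.

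The heart of the argument is the construction of that perfect matching, component by component. Write $G_{SR}=\bigcup_{i=1}^k G_i$, so $G_{SR}\times H_{SR}=\bigcup_{i=1}^k (G_i\times H_{SR})$, and it is enough to give a perfect matching of each $G_i\times H_{SR}$. Fix a perfect matching $M_H$ of $H_{SR}$. There are two cases according to the structure of $G_i$. If $G_i$ has a perfect matching $M_i$, then for each edge $xy\in M_i$ and each edge $ab\in M_H$ we can take the two product edges $(x,a)(y,b)$ and $(x,b)(y,a)$ (both present in the direct product by definition); as $xy$ ranges over $M_i$ and $ab$ over $M_H$ these cover every vertex $(v,c)$ of $G_i\times H_{SR}$ exactly once, giving a perfect matching. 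If instead $G_i$ is Hamiltonian, let $C=v_1v_2\cdots v_{2m}v_1$ be a Hamiltonian cycle (recall $|V(G_i)|$ is even). Here I would use the standard fact that the direct product of an even cycle with any graph admitting a perfect matching again has a perfect matching: pair up $V(G_i)$ along $C$ as $\{v_1,v_2\},\{v_3,v_4\},\dots$ using the odd-indexed edges $v_{2j-1}v_{2j}$ of $C$, which again form a perfect matching of $G_i$, and then proceed exactly as in the first case. (In fact the Hamiltonian case thereby reduces to the perfect-matching case, so the dichotomy in the hypothesis could be streamlined, but I will keep both for clarity.) This yields a perfect matching $M$ of $G_{SR}\times H_{SR}$, hence $\alpha(G_{SR}\times H_{SR})\ge |M| = |\partial(G)||\partial(H)|/2$.

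For the matching upper bound I would argue that $G_{SR}\times H_{SR}$ is bipartite: $H_{SR}$, being bipartite, is $2$-colorable, say with color classes $A\cup B=V(H_{SR})$, and then $\{(v,c): c\in A\}\cup\{(v,c): c\in B\}$ is a proper $2$-coloring of $G_{SR}\times H_{SR}$, because any edge $(x,a)(y,b)$ of the product requires $ab\in E(H_{SR})$, forcing $a,b$ to lie in different classes. By König--Egerv\'ary, $\alpha(G_{SR}\times H_{SR})$ equals the size of a maximum matching, which is at most $|V(G_{SR}\times H_{SR})|/2=|\partial(G)||\partial(H)|/2$. Combining with the lower bound gives equality, and Theorem~\ref{eq_oellerman_direct} finishes the proof. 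The main obstacle is the matching construction in the Hamiltonian case: one must be careful that extracting a perfect matching from an even Hamiltonian cycle and then taking the ``crossed'' product edges $(x,a)(y,b),(x,b)(y,a)$ does indeed cover each vertex of $G_i\times H_{SR}$ exactly once — this is where the evenness of $|V(G_i)|$ and the perfectness of $M_H$ are both used, and it is worth verifying that the two product edges associated to a pair $(xy\in M_i,\ ab\in M_H)$ are vertex-disjoint from those associated to any other pair.
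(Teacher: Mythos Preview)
Your overall strategy --- bipartiteness of $G_{SR}\times H_{SR}$ from bipartiteness of $H_{SR}$, a perfect matching of the product, then K\"onig--Egerv\'ary and Theorem~\ref{eq_oellerman_direct} --- matches the paper exactly. The gap is in your treatment of the Hamiltonian case. You write ``let $C=v_1v_2\cdots v_{2m}v_1$ be a Hamiltonian cycle (recall $|V(G_i)|$ is even),'' but Hamiltonicity does \emph{not} force even order: any odd cycle is Hamiltonian, and indeed $(C_{2k+1})_{SR}\cong C_{2k+1}$, so odd Hamiltonian components of $G_{SR}$ genuinely arise. When $|V(G_i)|$ is odd you cannot extract a perfect matching from the Hamiltonian cycle, so your reduction to the perfect-matching case collapses precisely here. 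Your parenthetical remark that ``the dichotomy in the hypothesis could be streamlined'' is a symptom of this error: the Hamiltonian clause is there exactly to cover odd-order components, which have no perfect matching.

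The paper fills this hole with a different matching on $G_i\times H_{SR}$ that exploits the full cycle structure rather than just a $1$-factor. For a Hamiltonian cycle $v_1v_2\cdots v_{n_i}v_1$ with $n_i$ odd and an edge $x_jy_j$ of the perfect matching of $H_{SR}$, take the $n_i$ edges
\[
(v_1,x_j)(v_2,y_j),\ (v_2,x_j)(v_3,y_j),\ \dots,\ (v_{n_i-1},x_j)(v_{n_i},y_j),\ (v_{n_i},x_j)(v_1,y_j).
\]
These are pairwise vertex-disjoint and cover all of $V(G_i)\times\{x_j,y_j\}$; doing this for each matching edge of $H_{SR}$ gives a perfect matching of $G_i\times H_{SR}$ even when $n_i$ is odd. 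Insert this construction in place of your even-order assumption and the rest of your argument goes through unchanged.
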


\begin{proof}
Since $H_{SR}$ is bipartite, $G_{SR}\times H_{SR}$ is bipartite. We show next that $G_{SR}\times H_{SR}$ has a perfect matching.

Let  $n_i$ be the order of $G_i$, $i\in \{1,...,k\}$, and let $\{x_1y_1,x_2y_2, ...,x_{\frac{|\partial(H)|}{2}}y_{\frac{|\partial(H)|}{2}}\}$ be a perfect matching of  $H_{SR}$.
We differentiate two cases.

Case 1: $G_i$ has a perfect matching. If  $\{u_1v_1,u_2v_2 ...,u_{\frac{n_i}{2}}v_{\frac{n_i}{2}}\}$ is a perfect matching of $G_i$, then the set
$\{(u_1,y_1)(v_1,x_1),(v_1,y_1)(u_1,x_1)$,...,$(u_{\frac{n_i}{2}},y_1)$ $(v_{\frac{n_i}{2}},x_1)$, $(v_{\frac{n_i}{2}},y_1)(u_{\frac{n_i}{2}},x_1)$  ,..., $(u_1,y_2)(v_1,x_2),$ $(v_1,y_2)(u_1,x_2)$ ,..., $(u_{\frac{n_i}{2}},y_2)(v_{\frac{n_i}{2}},x_2),$ $(v_{\frac{n_i}{2}},y_2)(u_{\frac{n_i}{2}},x_2),$ ..., $(u_1,y_{\frac{|\partial(H)|}{2}})(v_1,x_{\frac{|\partial(H)|}{2}}),$ $(v_1,y_{\frac{|\partial(H)|}{2}})(u_1,x_{\frac{|\partial(H)|}{2}})$, ...,$(u_{\frac{n_i}{2}},y_{\frac{|\partial(H)|}{2}})(v_{\frac{n_i}{2}},x_{\frac{|\partial(H)|}{2}})$, $(v_{\frac{n_i}{2}},y_{\frac{|\partial(H)|}{2}})(u_{\frac{n_i}{2}},x_{\frac{|\partial(H)|}{2}})\}$
is a perfect matching of $G_i\times H_{SR}$.

Case 2: $G_i$ is Hamiltonian. Let $v_1,v_2,...,v_{n_i},v_1$ be  a Hamiltonian cycle of $G_i$. If $n_i$ is even, then $G_i$ has a perfect matching and this case coincides with Case 1. So we suppose that $n_i$ is odd.  In this case,   $\{(v_1,x_1)(v_2,y_1),(v_2,x_1)(v_3,y_1),$..., $(v_{n_i-1},x_1)(v_{n_i},y_1),$  $(v_{n_i},x_1)(v_1,y_1)$, $(v_1,x_2)(v_2,y_2),(v_2,x_2)(v_3,y_2),$ ..., $(v_{n_i-1},x_2)(v_{n_i},y_2),$ $(v_{n_i},x_2)(v_1,y_2),$ ..., $(v_1,x_{\frac{|\partial(H)|}{2}})(v_2,y_{\frac{|\partial(H)|}{2}}),(v_2,x_{\frac{|\partial(H)|}{2}})(v_3,y_{\frac{|\partial(H)|}{2}}),$ ...,\\ ...,$(v_{n_i-1},x_{\frac{|\partial(H)|}{2}})(v_{n_i},y_{\frac{|\partial(H)|}{2}}),$$ (v_{n_i},x_{\frac{|\partial(H)|}{2}})(v_1,y_{\frac{|\partial(H)|}{2}})\}$
is a perfect matching of $G_i\times H_{SR}$.

According to Cases 1 and 2 the graph $\bigcup_{i=1}^kG_i\times H_{SR}=G_{SR}\times H_{SR}$ has a perfect matching. Now, since $G_{SR}\times H_{SR}$ is bipartite and it has a perfect matching, by the K\"{o}nig-Egerv\'{a}ry Theorem we have $\alpha(G_{SR}\times H_{SR})=\frac{|\partial(G)||\partial(H)|}{2}$. This, concludes the proof by Theorem \ref{eq_oellerman_direct}.
\end{proof}

\begin{corollary}
The following statements hold for any connected $2$-antipodal graph $G$ of order $n$.
\begin{itemize}
\item If $H$ is a connected $2$-antipodal graph of order $r$, then $dim_s(G\square H)=\frac{nr}{2}.$
\item If $H$ is a connected graph where $|\partial(H)|=|\sigma(H)|$, then $$dim_s(G\square H)=\frac{n|\sigma(H)|}{2}.$$ In particular, for any tree $T$, $dim_s(G\square T)=\frac{nl(T)}{2}$, and for any complete graph $K_r$, $dim_s(G\square K_r)=\frac{nr}{2}.$
\end{itemize}
\end{corollary}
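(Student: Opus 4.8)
The plan is to deduce the Corollary from Theorem~\ref{theoremMatching}, so the main task is to verify, in each case, that the hypotheses of that theorem are met: namely that one factor's strong resolving graph is bipartite with a perfect matching, and that every connected component of the other factor's strong resolving graph is either Hamiltonian or has a perfect matching. First I would recall from the itemized list in the introduction that for a connected $2$-antipodal graph $G$ of order $n$ we have $G_{SR}\cong\bigcup_{i=1}^{n/2}K_2$; in particular $|\partial(G)|=n$, each connected component is a $K_2$, and a single edge is trivially a perfect matching. The same description shows $G_{SR}$ is bipartite (a disjoint union of edges) and has a perfect matching. So $G$ simultaneously satisfies both roles in Theorem~\ref{theoremMatching}.

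For the first item, take $H$ to be a connected $2$-antipodal graph of order $r$. Then $H_{SR}\cong\bigcup_{i=1}^{r/2}K_2$ is bipartite with a perfect matching, and the components of $G_{SR}$ are copies of $K_2$, each of which has a perfect matching. Applying Theorem~\ref{theoremMatching} with this $G$ and $H$ yields $dim_s(G\square H)=\frac{|\partial(G)||\partial(H)|}{2}=\frac{nr}{2}$, since $|\partial(G)|=n$ and $|\partial(H)|=r$.

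For the second item, take $H$ connected with $|\partial(H)|=|\sigma(H)|$. By the first bullet of the itemized list, $\partial(H)=\sigma(H)$ forces $H_{SR}\cong K_{|\partial(H)|}$. Here I would split on the parity of $|\partial(H)|$: if it is even, $K_{|\partial(H)|}$ has a perfect matching, but it is not bipartite when $|\partial(H)|\geq 3$, so Theorem~\ref{theoremMatching} cannot be applied in the orientation with $H_{SR}$ bipartite. Instead I would apply it with the roles reversed, using $G_{SR}\cong\bigcup K_2$ as the bipartite factor with a perfect matching, and noting that $H_{SR}\cong K_{|\partial(H)|}$ is Hamiltonian whenever $|\partial(H)|\geq 3$ (and is itself $K_2$, hence has a perfect matching, when $|\partial(H)|=2$; the degenerate cases $|\partial(H)|\le 1$ can be handled directly or excluded). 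Thus every component of $H_{SR}$ is Hamiltonian or has a perfect matching, and Theorem~\ref{theoremMatching} gives $dim_s(G\square H)=\frac{|\partial(G)||\partial(H)|}{2}=\frac{n|\sigma(H)|}{2}$. The special cases follow by recalling that $\partial(T)=\sigma(T)$ is the leaf set of a tree $T$, so $|\sigma(T)|=l(T)$, and that $\partial(K_r)=\sigma(K_r)=V(K_r)$, so $|\sigma(K_r)|=r$.

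The only genuine subtlety — the step I expect to require the most care — is the parity/orientation issue just mentioned: Theorem~\ref{theoremMatching} is stated asymmetrically (one factor bipartite, the other with the Hamiltonian-or-perfect-matching components), so one must be careful to assign $G$ and $H$ to the correct roles, and in particular to use $G_{SR}\cong\bigcup K_2$ as the bipartite partner in the second item rather than trying to force the non-bipartite clique $K_{|\partial(H)|}$ into that slot. A secondary point worth a sentence is checking that $2$-antipodal graphs indeed have even order, which is immediate since the antipodality relation pairs up the vertices, guaranteeing $\frac{nr}{2}$ and $\frac{n|\sigma(H)|}{2}$ are integers.
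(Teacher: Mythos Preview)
Your proposal is correct and follows exactly the intended route: the paper states this corollary immediately after Theorem~\ref{theoremMatching} without an explicit proof, and your argument spells out precisely the verification the paper leaves implicit, namely that $G_{SR}\cong\bigcup K_2$ supplies the bipartite factor with a perfect matching while the other factor's strong resolving graph ($\bigcup K_2$ in the first item, $K_{|\partial(H)|}$ in the second) has components that are Hamiltonian or perfectly matched. Your attention to the asymmetric roles in Theorem~\ref{theoremMatching} and the need to swap $G$ and $H$ in the second item is exactly right and is the only point requiring care.
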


Since $(K_n)_{SR}\cong K_n$, $(T)_{SR}\cong K_{l(T)}$, where $T$ is a tree with $l(T)$ leaves, $(C_{2k})_{SR}\cong \bigcup_{i=1}^k K_2$ and $(C_{2k+1})_{SR}\cong C_{2k+1}$, the following is a consequence of Theorem \ref{theoremMatching}:

\begin{corollary}\label{corollaryTheoremMatching}

\begin{enumerate}[{\rm (a)}]
\item  $dim_s(K_n\square P_r)=n.$
\item  For any tree $T$ of $l(T)$ leaves, $dim_s(T\square P_r)=l(T).$
\item  $dim_s(C_{n}\square P_r)=n.$
\item  $dim_s(K_{n}\square C_{2k})=nk.$
\item  For any tree $T$ of $l(T)$ leaves, $dim_s(T\square C_{2k})=l(T)k.$
\item  \cite{Oellermann} $dim_s(C_n\square C_{2k})=nk.$
\end{enumerate}
\end{corollary}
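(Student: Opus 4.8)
The plan is to obtain all six equalities as immediate applications of Theorem~\ref{theoremMatching}, plugging in the strong resolving graphs recorded in the itemized list of Section~1 together with the boundary cardinalities $|\partial(K_n)|=n$, $|\partial(T)|=l(T)$ (the leaves), and $|\partial(C_n)|=n$. So the work reduces entirely to, in each case, choosing which factor plays the role of ``$G$'' and which plays the role of ``$H$'' in Theorem~\ref{theoremMatching}, and then checking the two hypotheses of that theorem.

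First I would fix the factor that must be the ``$H$'' of Theorem~\ref{theoremMatching}, namely the one whose strong resolving graph is bipartite with a perfect matching. For parts (a)--(c) this is $P_r$: since $P_r$ (with $r\ge 2$) is a tree with two leaves, $(P_r)_{SR}\cong K_2$, which is bipartite and is itself a perfect matching, and $|\partial(P_r)|=2$. For parts (d)--(f) this is $C_{2k}$: by the list $(C_{2k})_{SR}\cong\bigcup_{i=1}^{k}K_2$, a disjoint union of edges, hence bipartite with a perfect matching, and $|\partial(C_{2k})|=2k$. In both situations the hypothesis on $H$ holds.

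Next I would verify, for the remaining factor $G$, that every connected component of $G_{SR}$ is Hamiltonian or has a perfect matching.
\begin{enumerate}[(i)]
\item $G=K_n$ (parts (a), (d)): $G_{SR}\cong K_n$ is connected, and $K_n$ is Hamiltonian for $n\ge 3$ and has a perfect matching for $n$ even, covering all $n\ge 2$.
\item $G=T$ a tree (parts (b), (e)): $G_{SR}\cong K_{l(T)}$ is connected and $l(T)\ge 2$, so the same dichotomy applies.
\item $G=C_n$ (parts (c), (f)): if $n=2k$ then $G_{SR}\cong\bigcup_{i=1}^{k}K_2$ and each component is a single edge, hence has a perfect matching; if $n$ is odd then $G_{SR}\cong C_n$, which is Hamiltonian.
\end{enumerate}
With both hypotheses checked, Theorem~\ref{theoremMatching} yields $dim_s(G\square H)=\tfrac{|\partial(G)||\partial(H)|}{2}$, and I would conclude by substituting: (a) $\tfrac{n\cdot 2}{2}=n$; (b) $\tfrac{l(T)\cdot 2}{2}=l(T)$; (c) $\tfrac{n\cdot 2}{2}=n$; (d) $\tfrac{n\cdot 2k}{2}=nk$; (e) $\tfrac{l(T)\cdot 2k}{2}=l(T)k$; (f) $\tfrac{n\cdot 2k}{2}=nk$.

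There is no genuinely hard step: all the difficulty is already packaged into Theorem~\ref{theoremMatching} (and, behind it, Theorem~\ref{cartesian_directed} and the K\"onig--Egerv\'ary theorem) and into the previously computed strong resolving graphs. The only point that needs a little care is the bookkeeping of which factor is taken to be $H$ — it must be $P_r$ or $C_{2k}$, because $K_n$ for $n\ge 3$, $K_{l(T)}$ for $l(T)\ge 3$, and the odd cycles $C_{2k+1}$ are not bipartite — together with a couple of small-order sanity checks ($K_2$ is not Hamiltonian but has a perfect matching; an odd cycle is Hamiltonian but has no perfect matching), all of which are absorbed by the ``Hamiltonian or has a perfect matching'' alternative in the hypothesis.
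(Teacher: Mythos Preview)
Your proposal is correct and follows exactly the paper's approach: the paper derives all six items as direct consequences of Theorem~\ref{theoremMatching} using the previously listed identifications $(K_n)_{SR}\cong K_n$, $(T)_{SR}\cong K_{l(T)}$, $(C_{2k})_{SR}\cong\bigcup_{i=1}^k K_2$, and $(C_{2k+1})_{SR}\cong C_{2k+1}$. You have simply spelled out the hypothesis-checking and the arithmetic that the paper leaves implicit.
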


Our next tool is a well-known consequence of  Hall's marriage theorem.
\begin{lemma}\label{lemmaHall's}
{\rm (Hall, 1935)}
Every regular bipartite graph has a perfect matching.
\end{lemma}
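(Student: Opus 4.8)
The plan is to derive the result directly from Hall's marriage theorem by a simple double-counting argument. Let $G$ be a $k$-regular bipartite graph with bipartition $(A,B)$; I may assume $k\ge 1$, since for $k=0$ (with $A$ and $B$ both empty) the statement is vacuous. First I would observe that $|A|=|B|$: counting the edges of $G$ from the $A$-side gives $|E(G)|=k|A|$, and counting them from the $B$-side gives $|E(G)|=k|B|$, so $k|A|=k|B|$ and hence $|A|=|B|$.

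Next I would verify Hall's condition for the part $A$, namely that $|N(S)|\ge |S|$ for every $S\subseteq A$, where $N(S)$ denotes the set of vertices of $B$ adjacent to some vertex of $S$. Fix such an $S$ and let $E(S)$ be the set of edges of $G$ having an endpoint in $S$. Since every vertex of $S$ has degree $k$, we have $|E(S)|=k|S|$. On the other hand, every edge of $E(S)$ has its other endpoint in $N(S)$, so $E(S)$ is a subset of the set of edges incident with $N(S)$, which has size at most $k|N(S)|$. Therefore $k|S|\le k|N(S)|$, and dividing by $k$ gives $|S|\le |N(S)|$, as required.

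By Hall's marriage theorem, $G$ then contains a matching $M$ that saturates every vertex of $A$. Since $|A|=|B|$ and $M$ saturates all of $A$, it must saturate all of $B$ as well, so $M$ is a perfect matching of $G$. The argument is entirely routine; there is no genuine obstacle beyond recalling the statement of Hall's theorem and carrying out the edge count. The only points meriting a word of care are the reduction to $k\ge 1$ and the final observation that, once the two sides have equal cardinality, a matching saturating $A$ automatically saturates $B$ and hence is perfect.
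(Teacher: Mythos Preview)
Your argument is correct and is precisely the standard derivation of this fact from Hall's marriage theorem. The paper itself does not give a proof: it simply introduces the lemma as ``a well-known consequence of Hall's marriage theorem'' and states it without argument, so there is no alternative approach to compare against. One trivial remark: your aside about the case $k=0$ is not quite right, since a $0$-regular bipartite graph need not have both parts empty (and indeed has no perfect matching if it has any vertices); the lemma is implicitly about $k\ge 1$, and you should simply say so.
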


\begin{theorem}\label{theoremMatching2}
Let   $G$ and $H$ be two connected graphs such that $G_{SR}$ and $H_{SR}$ are  regular and at least one of them is bipartite. Then
$$dim_s(G\square H)=\frac{|\partial(G)||\partial(H)|}{2}.$$
\end{theorem}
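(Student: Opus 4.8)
The plan is to combine Theorem~\ref{eq_oellerman_direct} with the way regularity and bipartiteness behave under the direct product, and then to invoke Hall's theorem (Lemma~\ref{lemmaHall's}) together with the K\"onig--Egerv\'ary theorem. By Theorem~\ref{eq_oellerman_direct} it suffices to prove that $\alpha(G_{SR}\times H_{SR})=\frac{|\partial(G)||\partial(H)|}{2}$.

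First I would record two elementary facts about the direct product. If $u$ has degree $d_1$ in $G_{SR}$ and $a$ has degree $d_2$ in $H_{SR}$, then $(u,a)$ has degree $d_1 d_2$ in $G_{SR}\times H_{SR}$; hence if $G_{SR}$ is $r_1$-regular and $H_{SR}$ is $r_2$-regular, then $G_{SR}\times H_{SR}$ is $r_1 r_2$-regular. Moreover, assuming say $H_{SR}$ is bipartite with parts $X,Y$, the assignment sending $(u,a)$ to the part of $a$ is a proper $2$-colouring of $G_{SR}\times H_{SR}$, since an edge $(u,a)(v,b)$ forces $ab\in E(H_{SR})$ and thus $a,b$ lie in different parts; hence $G_{SR}\times H_{SR}$ is bipartite.

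Next I would check that the common degree is positive, so that Lemma~\ref{lemmaHall's} is not applied vacuously. By the definition of the strong resolving graph, $V(G_{SR})=\partial(G)$ and every vertex of $\partial(G)$ is mutually maximally distant from some vertex, so $G_{SR}$ has no isolated vertices, and likewise for $H_{SR}$; therefore $r_1,r_2\ge 1$ and $G_{SR}\times H_{SR}$ is $r_1 r_2$-regular with $r_1 r_2\ge 1$. Each connected component of $G_{SR}\times H_{SR}$ is then a regular bipartite graph of positive degree, hence balanced, and so has a perfect matching by Lemma~\ref{lemmaHall's}; the union over components is a perfect matching of $G_{SR}\times H_{SR}$. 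Thus its maximum matching has size $\frac{|V(G_{SR})||V(H_{SR})|}{2}=\frac{|\partial(G)||\partial(H)|}{2}$, and since the graph is bipartite the K\"onig--Egerv\'ary theorem gives $\alpha(G_{SR}\times H_{SR})=\frac{|\partial(G)||\partial(H)|}{2}$. The claimed formula follows from Theorem~\ref{eq_oellerman_direct}.

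There is no deep obstacle here; the points that need a little care are verifying that the direct product of regular graphs is regular and that a direct product with a bipartite factor is bipartite, and, most importantly, ensuring the common degree is positive so that ``regular bipartite $\Rightarrow$ perfect matching'' genuinely applies --- which is exactly why the absence of isolated vertices in $G_{SR}$ and $H_{SR}$ must be noted. It is also worth remarking in passing that this forces $|\partial(G)||\partial(H)|$ to be even, so the stated quantity is indeed an integer.
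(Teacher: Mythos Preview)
Your proof is correct and follows essentially the same approach as the paper: show that $G_{SR}\times H_{SR}$ is regular and bipartite, apply Lemma~\ref{lemmaHall's} to obtain a perfect matching, invoke the K\"onig--Egerv\'ary Theorem to compute $\alpha$, and conclude via Theorem~\ref{eq_oellerman_direct}. The only difference is that you spell out the verifications (regularity and bipartiteness of the direct product, and the positive-degree check) that the paper states without justification.
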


\begin{proof}
Since $G_{SR}$ and $H_{SR}$ are  regular graphs and at least one of them is bipartite,  $G_{SR}\times H_{SR}$ is a regular bipartite graph. Hence, by Lemma \ref{lemmaHall's}, $G_{SR}\times H_{SR}$ has a perfect matching. Thus, by the K\"{o}nig-Egerv\'{a}ry Theorem,  $\alpha(G_{SR}\times H_{SR})=\frac{|\partial(G)||\partial(H)|}{2}$. By Theorem \ref{eq_oellerman_direct} the result follows.
\end{proof}

Note that Corollary \ref{corollaryTheoremMatching} is also  a direct consequence of Theorem \ref{theoremMatching2}.

The following consequence of  Theorem \ref{theoremMatching2} is derived from the fact that the strong resolving graph of a distance-regular graph is regular.

\begin{corollary}\label{corollaryTheoremMatching2}
Let $G$ be a distance-regular graph of order $n$ and let $H$ be a connected graph such that $H_{SR}$ is a regular bipartite graph. Then $$dim_s(G\square H)=\frac{n|\partial(H)|}{2}.$$ In particular, is $H$ is a $2$-antipodal graph of order $r$, then $$dim_s(G\square H)=\frac{nr}{2}.$$
\end{corollary}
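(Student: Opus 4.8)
The plan is to deduce the statement directly from Theorem~\ref{theoremMatching2}, whose hypotheses ask that $G_{SR}$ and $H_{SR}$ both be regular with at least one of them bipartite. Since $H_{SR}$ is regular and bipartite by assumption, the only thing requiring proof is that $G_{SR}$ is regular whenever $G$ is distance-regular. Granting this, Theorem~\ref{theoremMatching2} yields $dim_s(G\square H)=\frac{|\partial(G)||\partial(H)|}{2}$; and, as noted in the introduction, $\partial(G)=V(G)$ for every distance-regular graph, so $|\partial(G)|=n$ and the displayed formula follows.

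The heart of the argument is therefore the claim that $G_{SR}$ is regular for a distance-regular graph $G$ of diameter $D$. I would prove it by first showing that two vertices $u,v$ of $G$ are mutually maximally distant if and only if $d_G(u,v)=D$. If $d_G(u,v)=D$, then every neighbour of $u$ lies at distance at most $D=d_G(u,v)$ from $v$, so $u$ is maximally distant from $v$; by symmetry $v$ is maximally distant from $u$. Conversely, suppose $d_G(u,v)=j<D$. By distance-regularity the number $b_j$ of neighbours of $u$ lying at distance $j+1$ from $v$ depends only on $j$, and $b_j\ge 1$, since $b_j=0$ would mean that no vertex lies at distance greater than $j$ from $v$, contradicting $D(G)=D>j$. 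Hence $u$ has a neighbour at distance $j+1>d_G(u,v)$ from $v$, so $u$ is not maximally distant from $v$. Consequently $G_{SR}$ is exactly the graph on $V(G)$ in which $u$ and $v$ are adjacent precisely when $d_G(u,v)=D$; since the number of vertices at distance $D$ from a given vertex is constant by distance-regularity, $G_{SR}$ is regular. (This incidentally reproves $\partial(G)=V(G)$, since every vertex has some vertex at distance $D$ from it.)

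With $G_{SR}$ regular and $H_{SR}$ regular and bipartite, Theorem~\ref{theoremMatching2} applies and gives $dim_s(G\square H)=\frac{n|\partial(H)|}{2}$. For the ``in particular'' clause, recall from the introduction that a $2$-antipodal graph $H$ of order $r$ has $\partial(H)=V(H)$ and $H_{SR}\cong\bigcup_{i=1}^{r/2}K_2$, a $1$-regular bipartite graph; substituting $|\partial(H)|=r$ gives $dim_s(G\square H)=\frac{nr}{2}$. The only step that is not a mechanical appeal to earlier results is the characterization of mutually maximally distant pairs in a distance-regular graph, and I do not expect it to present real difficulty: both the positivity of $b_j$ for $j<D$ and the constancy of the number of vertices at distance $D$ are immediate from the definition of distance-regularity.
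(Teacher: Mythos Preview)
Your proposal is correct and follows exactly the paper's approach: the paper states just before the corollary that it ``is derived from the fact that the strong resolving graph of a distance-regular graph is regular,'' and then applies Theorem~\ref{theoremMatching2}. You do the same, with the bonus that you actually supply a proof of the asserted regularity of $G_{SR}$ (via the characterization of mutually maximally distant pairs as diametral pairs), which the paper leaves implicit.
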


Recall that the largest cardinality of a set of vertices of $G$, no two of which are adjacent, is called the \emph{independence number} of $G$ and it is denoted by $\beta(G)$. We refer to a $\beta(G)$-set in a graph $G$ as an independent set of cardinality $\beta(G)$.

The following well-known result, due to Gallai, states the relationship between the independence number and the vertex cover number of a graph.

\begin{theorem}{\rm (Gallai's theorem)}\label{th_gallai}
For any graph  $G$ of order $n$,
$\alpha(G)+\beta(G) = n.$
\end{theorem}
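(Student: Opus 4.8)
\section*{Proof proposal}

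The plan is to reduce Gallai's identity to a single structural observation: a set of vertices is a vertex cover of $G$ if and only if its complement is an independent set of $G$. Everything else is bookkeeping with cardinalities.

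First I would prove the complementation lemma. Let $S\subseteq V(G)$ and write $\overline{S}=V(G)\setminus S$. By definition, $S$ is a vertex cover precisely when every edge of $G$ has at least one endpoint in $S$, which is logically equivalent to saying that no edge of $G$ has \emph{both} endpoints in $\overline{S}$; and that in turn is exactly the statement that $\overline{S}$ induces no edges, i.e.\ $\overline{S}$ is independent. Thus $S\mapsto \overline{S}$ is an inclusion-reversing bijection between the family of vertex covers of $G$ and the family of independent sets of $G$.

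Next I would translate this bijection into the numerical statement. Take an $\alpha(G)$-set $S$, i.e.\ a minimum vertex cover; then $\overline{S}$ is an independent set of size $n-\alpha(G)$, so $\beta(G)\ge n-\alpha(G)$. Conversely take a $\beta(G)$-set $U$, a maximum independent set; then $\overline{U}$ is a vertex cover of size $n-\beta(G)$, so $\alpha(G)\le n-\beta(G)$. Combining the two inequalities gives $\alpha(G)+\beta(G)=n$.

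There is essentially no obstacle here; the only point requiring a line of care is the biconditional ``every edge meets $S$'' $\iff$ ``$\overline{S}$ is independent,'' which I would spell out explicitly as above so that the argument is self-contained and does not silently assume $G$ has no isolated vertices or the like (the statement is valid for every graph, loops aside, and isolated vertices simply lie in every maximum independent set and in no minimum vertex cover). I would also note in passing that the identity applies verbatim to the strong resolving graphs $G_{SR}$ appearing earlier, so that $dim_s(G)=\alpha(G_{SR})=|\partial(G)|-\beta(G_{SR})$, which is the form in which Gallai's theorem will be used in the sequel.
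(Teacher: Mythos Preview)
Your argument is the standard and correct proof of Gallai's theorem via the observation that $S$ is a vertex cover if and only if $V(G)\setminus S$ is independent, followed by the two inequalities that pin down $\alpha(G)+\beta(G)=n$. There is nothing to fault in it.

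However, the paper does not actually prove this statement: Gallai's theorem is merely quoted as a ``well-known result'' and used as a tool (for instance, to derive equation~(\ref{eq_oellerman_gallai}) and in the proof of Proposition~\ref{propoCartDirecComplete}). So there is no ``paper's own proof'' to compare against; your proposal simply supplies the classical argument that the authors take for granted. Your closing remark about applying the identity to $G_{SR}$ to get $dim_s(G)=|\partial(G)|-\beta(G_{SR})$ is exactly how the paper uses it.
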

Thus using this result and  Theorem \ref{eq_oellerman_direct} we obtain
\begin{equation}\label{eq_oellerman_gallai}
dim_s(G\square H) = |\partial(G)||\partial(H)| - \beta (G_{SR}\times H_{SR})
\end{equation}

\begin{lemma}\label{lemma-vertex-transitive}{\em \cite{vertex-transitive}}
Let $G$ and $H$ be two  vertex-transitive graphs of order $n_1$, $n_2$, respectively. Then $$\beta(G\times H)=\max\{n_1\beta(H),n_2\beta(G)\}.$$
\end{lemma}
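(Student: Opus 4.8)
The plan is to prove the two inequalities $\beta(G\times H)\ge \max\{n_1\beta(H),n_2\beta(G)\}$ and $\beta(G\times H)\le \max\{n_1\beta(H),n_2\beta(G)\}$ separately. The first is elementary and does not even use vertex-transitivity: if $S$ is a $\beta(G)$-set, then $S\times V(H)$ is an independent set of $G\times H$, because for distinct vertices $(s,h),(s',h')$ of $S\times V(H)$ one has $ss'\notin E(G)$ (either $s=s'$, and there are no loops, or $s\ne s'$ and $s,s'$ lie in the independent set $S$), so $(s,h)$ and $(s',h')$ are non-adjacent in $G\times H$. Hence $\beta(G\times H)\ge |S|\,|V(H)|=n_2\beta(G)$, and symmetrically $\beta(G\times H)\ge n_1\beta(H)$.

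For the reverse inequality I would pass to the fractional chromatic number $\chi_f$, using two facts. \emph{(i)} If $X$ is a vertex-transitive graph of order $N$, then $\chi_f(X)=N/\beta(X)$; indeed $\chi_f(X)\ge N/\beta(X)$ is the standard linear-programming estimate, while for the reverse one uses, as a fractional colouring, the family $\{\sigma(I_0):\sigma\in\Gamma\}$ where $I_0$ is a fixed $\beta(X)$-set and $\Gamma\le\mathrm{Aut}(X)$ is transitive, each set weighted equally: transitivity (via orbit-stabilizer) makes every vertex covered the same total amount, and rescaling yields a fractional colouring of total weight $N/\beta(X)$. \emph{(ii)} If $G$ and $H$ are vertex-transitive, then so is $G\times H$, since $\mathrm{Aut}(G)\times\mathrm{Aut}(H)$ acts transitively on $V(G)\times V(H)$ by $(\sigma,\tau)\cdot(g,h)=(\sigma(g),\tau(h))$ and this action preserves adjacency in $G\times H$. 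Applying (i) to $G$, $H$ and to $G\times H$, and using (ii), the lemma becomes equivalent to the identity
\[
\chi_f(G\times H)=\min\{\chi_f(G),\chi_f(H)\}.
\]
The inequality ``$\le$'' is routine here: pull a fractional colouring of $G$ (respectively of $H$) back along the projection homomorphism $G\times H\to G$ (respectively $G\times H\to H$), using that the preimage of an independent set under a graph homomorphism is independent; this merely reproves the easy lower bound above.

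The inequality $\chi_f(G\times H)\ge\min\{\chi_f(G),\chi_f(H)\}$ — equivalently, the upper bound $\beta(G\times H)\le\max\{n_1\beta(H),n_2\beta(G)\}$ itself — is the substantial step and the only real obstacle; it is the fractional relaxation of Hedetniemi's conjecture, which is known to hold for all graphs, so I would simply invoke it. If a self-contained argument in the vertex-transitive case were preferred, I would instead work with a maximum independent set $I$ of $G\times H$ and its fibres $I^g=\{h\in V(H):(g,h)\in I\}$: if $|I^g|\le\beta(H)$ for every $g$ then $|I|\le n_1\beta(H)$ and we are done, so one may assume some fibre $I^{g^\ast}$ is not independent and contains an edge $h_1h_2$ of $H$; then every vertex of $N_H(h_1)\cup N_H(h_2)$ is excluded from $I^{g'}$ for each $G$-neighbour $g'$ of $g^\ast$, and the idea is to combine this forbidden-neighbourhood phenomenon with the transitivity of both factors to force $|I|\le n_2\beta(G)$. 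I expect the bookkeeping in this second route to be the delicate point, which is why the reduction to fractional Hedetniemi is the cleaner line of attack.
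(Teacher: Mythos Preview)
The paper does not give a proof of this lemma at all: it is simply quoted from Zhang's manuscript \cite{vertex-transitive}, so there is no ``paper's own proof'' to compare your attempt against.

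On its merits, your main line of argument is correct. The lower bound is the standard observation that $S\times V(H)$ is independent in $G\times H$ whenever $S$ is independent in $G$. For the upper bound, your reduction is valid: $G\times H$ is vertex-transitive when $G$ and $H$ are, the identity $\chi_f(X)=|V(X)|/\beta(X)$ holds for every vertex-transitive graph $X$, and so the desired inequality is equivalent to $\chi_f(G\times H)\ge\min\{\chi_f(G),\chi_f(H)\}$. That inequality is precisely Zhu's theorem (the fractional version of Hedetniemi's conjecture, \emph{European J.\ Combin.}\ \textbf{32} (2011) 1168--1175), which holds for all finite graphs, so invoking it is legitimate. This route is arguably cleaner than Zhang's direct combinatorial argument, at the cost of importing a nontrivial external theorem.

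Your alternative ``self-contained'' sketch, however, is not a proof as written. You correctly isolate the dichotomy (either every fibre $I^g$ is independent in $H$, giving $|I|\le n_1\beta(H)$, or some fibre contains an $H$-edge), but the passage from ``some fibre contains an $H$-edge'' to ``$|I|\le n_2\beta(G)$'' is where all the content lies, and the forbidden-neighbourhood observation you make is only the opening move; turning it into the global bound genuinely requires the transitivity hypothesis in a structured way (this is essentially what Zhang's paper does). If you want a proof independent of Zhu's theorem you should either follow Zhang's argument or reproduce the averaging/no-homomorphism machinery that underlies Zhu's result in this special case; the paragraph you wrote would not stand on its own.
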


\begin{theorem}\label{vertex-transitive}
Let $G$ and $H$ be two  connected graphs such that  $G_{SR}$ and $H_{SR}$ are vertex-transitive graphs. Then
$$dim_s(G\square H)=\min\{|\partial(G)|dim_s(H), |\partial(H)|dim_s(G)\}.$$
\end{theorem}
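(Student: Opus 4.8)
The plan is to obtain the formula as a short chain of substitutions, using the Gallai-type identity~(\ref{eq_oellerman_gallai}) as the starting point and Lemma~\ref{lemma-vertex-transitive} as the key input. First I would record the orders and the transitivity hypothesis: the graph $G_{SR}$ has vertex set $\partial(G)$, so its order is $|\partial(G)|$, and likewise $H_{SR}$ has order $|\partial(H)|$; by hypothesis both are vertex-transitive. Hence Lemma~\ref{lemma-vertex-transitive} applies to the direct product $G_{SR}\times H_{SR}$ and yields
$$\beta(G_{SR}\times H_{SR})=\max\{|\partial(G)|\,\beta(H_{SR}),\ |\partial(H)|\,\beta(G_{SR})\}.$$
(Recall from Theorem~\ref{cartesian_directed} that $(G\square H)_{SR}\cong G_{SR}\times H_{SR}$, so this is exactly the independence number appearing in~(\ref{eq_oellerman_gallai}).)

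Next I would substitute this expression into~(\ref{eq_oellerman_gallai}). Writing $C=|\partial(G)|\,|\partial(H)|$ and noting that $C-\max\{A,B\}=\min\{C-A,\,C-B\}$, we get
$$dim_s(G\square H)=\min\{\,|\partial(G)|\,|\partial(H)|-|\partial(G)|\,\beta(H_{SR}),\ \ |\partial(G)|\,|\partial(H)|-|\partial(H)|\,\beta(G_{SR})\,\}.$$
Then I would factor each term and invoke Gallai's theorem (Theorem~\ref{th_gallai}) applied to $H_{SR}$, which has order $|\partial(H)|$, and to $G_{SR}$, which has order $|\partial(G)|$: this gives $|\partial(H)|-\beta(H_{SR})=\alpha(H_{SR})$ and $|\partial(G)|-\beta(G_{SR})=\alpha(G_{SR})$, so the two terms become $|\partial(G)|\,\alpha(H_{SR})$ and $|\partial(H)|\,\alpha(G_{SR})$. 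Finally, Theorem~\ref{lem_oellerman} identifies $\alpha(G_{SR})=dim_s(G)$ and $\alpha(H_{SR})=dim_s(H)$, which turns the last expression into the claimed $\min\{|\partial(G)|\,dim_s(H),\ |\partial(H)|\,dim_s(G)\}$.

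There is essentially no hard step: the whole argument is the composition (\ref{eq_oellerman_gallai}) $\to$ Lemma~\ref{lemma-vertex-transitive} $\to$ Gallai $\to$ Theorem~\ref{lem_oellerman}. The only point that warrants a line of justification is that Lemma~\ref{lemma-vertex-transitive} is genuinely applicable, i.e.\ that we may treat $G_{SR}$ and $H_{SR}$ as the vertex-transitive graphs to which it refers; this is immediate from the hypothesis, and one may additionally remark that every vertex of $\partial(G)$ is mutually maximally distant from some vertex, so $G_{SR}$ has no isolated vertices (a harmless sanity check that does not enter the computation). The formula of Lemma~\ref{lemma-vertex-transitive} for $\beta$ of a direct product of vertex-transitive graphs is doing all the real work; everything else is bookkeeping with Gallai's identity and the fundamental equality $dim_s=\alpha(\cdot_{SR})$.
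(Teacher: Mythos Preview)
Your proof is correct and follows essentially the same route as the paper: apply Lemma~\ref{lemma-vertex-transitive} to compute $\beta(G_{SR}\times H_{SR})$, plug into identity~(\ref{eq_oellerman_gallai}), and then unwind via Gallai's theorem and Theorem~\ref{lem_oellerman}. You are simply more explicit than the paper about the intermediate algebraic step $C-\max\{A,B\}=\min\{C-A,C-B\}$ and the back-substitution $\alpha(\cdot_{SR})=dim_s(\cdot)$, which the paper compresses into a single line.
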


\begin{proof}
Since $G_{SR}$ and $H_{SR}$ are vertex-transitive graphs,  it follows from Lemma \ref{lemma-vertex-transitive} that $\beta(G_{SR}\times H_{SR})=\max\{|\partial(G)|\beta(H),|\partial(H)|\beta(G)\}$. So, by (\ref{eq_oellerman_gallai}) we have
\begin{align*}
dim_s(G\square H)&=|\partial(G)||\partial(H)|-\beta(G_{SR}\times H_{SR})\\
&=|\partial(G)||\partial(H)|-\max\{|\partial(G)|\beta(H_{SR}),|\partial(H)|\beta(G_{SR})\}\\
&=\min\{|\partial(G)|dim_s(H), |\partial(H)|dim_s(G)\}.
\end{align*}
\end{proof}

The observations prior to Corollary 8 and Theorem \ref{vertex-transitive} yield the following:

\begin{corollary}\label{corollaryTheoremtransitive}

\begin{enumerate}[{\rm (a)}]
\item  $dim_s(K_n\square K_r)=\min\{n(r-1), r(n-1)\}.$
\item  For any trees $T_1$ and $T_2$, $dim_s(T_1\square T_2)=\min\{l(T_1)(l(T_2)-1), l(T_2)(l(T_1)-1)\}.$
\item  \cite{Oellermann} $dim_s(C_{2n+1}\square C_{2r+1})=\min\{(2n+1)(r+1), (2r+1)(n+1)\}.$
\item  $dim_s(K_{n}\square C_{2r+1})=\min\{n(r+1), (2r+1)(n-1)\}.$
\item  $dim_s(T\square C_{2r+1})=\min\{l(T)(r+1), (2r+1)(l(T)-1)\}.$
\item  For any tree $T$, $dim_s(K_n\square T)=\min\{l(T)(n-1), n(l(T)-1)\}.$
\end{enumerate}
\end{corollary}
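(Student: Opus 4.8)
The plan is to obtain all six identities from a single application of Theorem~\ref{vertex-transitive}. For that theorem to apply to a pair $(G,H)$ we need $G_{SR}$ and $H_{SR}$ to be vertex-transitive, and then we only have to know the two numbers $|\partial(G)|,|\partial(H)|$ and the two values $dim_s(G),dim_s(H)$ in order to read off $dim_s(G\square H)$ from the formula $\min\{|\partial(G)|\,dim_s(H),\,|\partial(H)|\,dim_s(G)\}$.

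First I would assemble the structural data recalled in Section~1. For a complete graph, $(K_n)_{SR}\cong K_n$, which is vertex-transitive; moreover $\partial(K_n)=V(K_n)$, so $|\partial(K_n)|=n$, and $dim_s(K_n)=n-1$. For a tree $T$ with $l(T)$ leaves, $(T)_{SR}\cong K_{l(T)}$, again vertex-transitive, with $\partial(T)$ equal to the leaf set, so $|\partial(T)|=l(T)$ and $dim_s(T)=l(T)-1$. For an odd cycle, $(C_{2r+1})_{SR}\cong C_{2r+1}$, which is vertex-transitive, with $\partial(C_{2r+1})=V(C_{2r+1})$, so $|\partial(C_{2r+1})|=2r+1$ and $dim_s(C_{2r+1})=\lceil (2r+1)/2 \rceil=r+1$. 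In particular, for every pair chosen from the three families $\{\,K_n,\ \text{trees},\ C_{2r+1}\,\}$ both strong resolving graphs are vertex-transitive, so Theorem~\ref{vertex-transitive} is applicable.

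The six statements then follow by substitution. Taking $(G,H)=(K_n,K_r)$ gives (a); $(T_1,T_2)$ gives (b); $(C_{2n+1},C_{2r+1})$ gives (c); $(K_n,C_{2r+1})$ gives (d); $(T,C_{2r+1})$ gives (e); and $(K_n,T)$ gives (f). In (f) the two terms of the minimum are displayed in the opposite order to the one produced by plugging into the formula, but this is immaterial since $\min$ is symmetric. There is essentially no obstacle beyond this bookkeeping; the only point that warrants a second look is the odd-cycle case, since $C_{2r+1}$ is the one factor whose strong resolving graph is not complete, so one should double-check $(C_{2r+1})_{SR}\cong C_{2r+1}$, $dim_s(C_{2r+1})=r+1$ and $|\partial(C_{2r+1})|=2r+1$ before substituting.
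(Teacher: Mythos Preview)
Your proposal is correct and matches the paper's own approach exactly: the corollary is obtained by applying Theorem~\ref{vertex-transitive} together with the facts recorded in Section~1 that $(K_n)_{SR}\cong K_n$, $(T)_{SR}\cong K_{l(T)}$, and $(C_{2r+1})_{SR}\cong C_{2r+1}$ are all vertex-transitive, along with the known values of $|\partial(\cdot)|$ and $dim_s(\cdot)$ for these graphs.
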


In order to obtain another consequence of Theorem \ref{vertex-transitive} we state another easily verified result.
\begin{lemma}\label{bound-dim-MMD}
For every graph $G$, $|\sigma(G)|-1\le dim_s(G)\le |\partial(G)|-1$.
\end{lemma}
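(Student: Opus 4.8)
The plan is to prove the two inequalities separately, relying throughout on Theorem~\ref{lem_oellerman}, which identifies $dim_s(G)$ with $\alpha(G_{SR})$. First I would recall that $V(G_{SR})=\partial(G)$ and that, by the remark preceding this lemma, $\sigma(G)\subseteq\partial(G)$; moreover every pair of simplicial vertices is mutually maximally distant in $G$, so the vertices of $\sigma(G)$ induce a clique in $G_{SR}$. For the upper bound $dim_s(G)\le|\partial(G)|-1$, observe that $G_{SR}$ has exactly $|\partial(G)|$ vertices, so deleting any single vertex $w\in\partial(G)$ leaves a vertex set $\partial(G)\setminus\{w\}$ of size $|\partial(G)|-1$; this is a vertex cover of $G_{SR}$ precisely because an edge not covered would have to be incident to $w$ only, i.e.\ $w$ would be its own neighbour, which is impossible. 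Hence $\alpha(G_{SR})\le|\partial(G)|-1$, and Theorem~\ref{lem_oellerman} gives the claim.

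For the lower bound $|\sigma(G)|-1\le dim_s(G)$, the key observation is that $G_{SR}$ contains $K_{|\sigma(G)|}$ as a subgraph, since as noted above the $|\sigma(G)|$ simplicial vertices are pairwise mutually maximally distant and hence pairwise adjacent in $G_{SR}$. A vertex cover of $G_{SR}$ restricted to this clique must be a vertex cover of $K_{|\sigma(G)|}$, and a vertex cover of a complete graph on $m$ vertices has size at least $m-1$ (it must omit at most one vertex, else an edge between two omitted vertices is uncovered). Therefore $\alpha(G_{SR})\ge\alpha\!\left(K_{|\sigma(G)|}\right)=|\sigma(G)|-1$, which by Theorem~\ref{lem_oellerman} is exactly $|\sigma(G)|-1\le dim_s(G)$.

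The only point requiring a little care — and the place I expect the main (though modest) obstacle — is justifying that two distinct simplicial vertices $u,v$ are mutually maximally distant in $G$; I would argue that if $w$ is a neighbour of $u$, then $w$ lies in the clique induced by $N(u)$ together with $u$, and one checks $d_G(v,w)\le d_G(v,u)$ by noting that any shortest $v$–$u$ path enters $u$ through a neighbour of $u$, which is adjacent to $w$, giving a $v$–$w$ walk no longer than $d_G(v,u)$; symmetry in $u,v$ then yields mutual maximal distance. With that in hand both bounds follow immediately, completing the proof.
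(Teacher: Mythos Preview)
Your proposal is correct and follows essentially the same route as the paper's own proof: both invoke Theorem~\ref{lem_oellerman} to recast the inequalities as $|\sigma(G)|-1\le\alpha(G_{SR})\le|\partial(G)|-1$, obtain the lower bound from the clique on $\sigma(G)$ sitting inside $G_{SR}$, and obtain the upper bound from the trivial fact that $\alpha(H)\le|V(H)|-1$. Your write-up is actually more thorough than the paper's, since you supply the short geodesic argument showing that two simplicial vertices are mutually maximally distant, whereas the paper simply asserts this.
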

\begin{proof}
Each simplicial vertex of $G$ is mutually maximally distant with every other extreme vertex of $G$. So, $G_{SR}$ has a subgraph isomorphic to $K_{|\sigma(G)|}$. Thus, $\alpha(G_{SR})\ge |\sigma(G)|-1$. Hence, by Theorem \ref{lem_oellerman}, $dim_s(G)\ge |\sigma(G)|-1$.

On the other hand, notice that for any graph $H$, $\alpha(H)\le |V(H)|-1$. Since $V(G_{SR})=\partial(G)$, it follows that $\alpha(G_{SR})\le |\partial(G)|-1$. Thus, by Theorem \ref{lem_oellerman}, the upper bound follows.
\end{proof}

Note that if $\sigma(G)=\partial(G)$, then by Lemma\ref{bound-dim-MMD} $dim_s(G)=|\partial(G)|-1$.
Hence, as a specific case of Theorem \ref{vertex-transitive} we obtain the following result.
\begin{corollary}\label{ej=cotasup}
Let $G$ and $H$ be two connected graphs.  If $\partial(G)=\sigma(G)$ and $\partial(H)=\sigma(H)$, then $$dim_s(G\square H)=\min\{|\partial(G)|(|\partial(H)|-1),|\partial(H)|(|\partial(G)|-1)\}.$$
\end{corollary}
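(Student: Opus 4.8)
The plan is to recognize this as an immediate specialization of Theorem \ref{vertex-transitive} once we identify the strong resolving graphs of $G$ and $H$. First I would recall the observation made in the bulleted list following the definition of the strong resolving graph: when $\partial(G)=\sigma(G)$ one has $G_{SR}\cong K_{|\partial(G)|}$, and likewise $H_{SR}\cong K_{|\partial(H)|}$ under the hypothesis $\partial(H)=\sigma(H)$. Since complete graphs are vertex-transitive, both $G_{SR}$ and $H_{SR}$ satisfy the hypothesis of Theorem \ref{vertex-transitive}.

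Next I would simply invoke Theorem \ref{vertex-transitive} to get
$$dim_s(G\square H)=\min\{|\partial(G)|\,dim_s(H),\ |\partial(H)|\,dim_s(G)\}.$$
To finish, I would substitute the exact values of $dim_s(G)$ and $dim_s(H)$: by the remark immediately following Lemma \ref{bound-dim-MMD}, the condition $\sigma(G)=\partial(G)$ forces $dim_s(G)=|\partial(G)|-1$, and similarly $dim_s(H)=|\partial(H)|-1$. Plugging these in yields exactly
$$dim_s(G\square H)=\min\{|\partial(G)|(|\partial(H)|-1),\ |\partial(H)|(|\partial(G)|-1)\},$$
as claimed.

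There is essentially no obstacle here: the corollary is purely a matter of assembling three already-established facts (the shape of $G_{SR}$ when boundary equals simplicial vertices, vertex-transitivity of complete graphs, and the closed form for $dim_s$ in the $\sigma=\partial$ case) and feeding them into Theorem \ref{vertex-transitive}. The only point worth stating explicitly for the reader is why the vertex-transitivity hypothesis of Theorem \ref{vertex-transitive} is met, namely that $K_{|\partial(G)|}$ and $K_{|\partial(H)|}$ are vertex-transitive; everything else is substitution.
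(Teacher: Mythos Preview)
Your proposal is correct and follows exactly the route the paper takes: the corollary is stated there as ``a specific case of Theorem \ref{vertex-transitive}'' after noting that $\sigma(G)=\partial(G)$ forces $dim_s(G)=|\partial(G)|-1$ via Lemma \ref{bound-dim-MMD}. You have simply spelled out in more detail why the vertex-transitivity hypothesis is met (namely, $G_{SR}\cong K_{|\partial(G)|}$ and $H_{SR}\cong K_{|\partial(H)|}$), which the paper leaves implicit.
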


The size of a largest matching is the \emph{matching number} of a graph $G$, and will be denoted
by $\mu(G)$. The following lemma will be useful to  establish  another  consequence of Theorem \ref{eq_oellerman_direct}.

\begin{lemma}\label{lemmaVertexcoverGxK2}
For any non-trivial  non-empty graphs $G$ and $H$,
$$\alpha(G\times H)\ge \mu(H)\alpha(G\times K_2)=\mu(H)\mu(G\times K_2)\ge 2\mu(G)\mu(H).$$
\end{lemma}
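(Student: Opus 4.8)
The plan is to verify the chain of (in)equalities link by link, from left to right. Write $V(K_2)=\{1,2\}$. The middle equality $\mu(H)\,\alpha(G\times K_2)=\mu(H)\,\mu(G\times K_2)$ reduces to $\alpha(G\times K_2)=\mu(G\times K_2)$: every edge of $G\times K_2$ joins a vertex with second coordinate $1$ to a vertex with second coordinate $2$, so $G\times K_2$ is bipartite and the equality follows from the K\"{o}nig-Egerv\'{a}ry Theorem. For the rightmost inequality it suffices to show $\mu(G\times K_2)\ge 2\mu(G)$ and then multiply by $\mu(H)$. Given a maximum matching $\{u_1v_1,\dots,u_{\mu(G)}v_{\mu(G)}\}$ of $G$, the edges $(u_j,1)(v_j,2)$ and $(u_j,2)(v_j,1)$, for $1\le j\le\mu(G)$, all belong to $G\times K_2$, and the $4\mu(G)$ vertices appearing in them are pairwise distinct because the $u_j$'s and $v_j$'s are; hence these $2\mu(G)$ edges form a matching of $G\times K_2$.

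The leftmost inequality is the heart of the argument. The key observation is: if $ab\in E(H)$, then the subgraph of $G\times H$ induced by $V(G)\times\{a,b\}$ is isomorphic to $G\times K_2$. Indeed, two vertices of $G\times H$ with equal second coordinate are never adjacent (that would require a loop in $H$), while $(x,a)$ and $(y,b)$ are adjacent in $G\times H$ precisely when $xy\in E(G)$, which is exactly the adjacency rule in $G\times K_2$; an isomorphism is given by $(x,a)\mapsto(x,1)$, $(x,b)\mapsto(x,2)$. Now fix a maximum matching $N=\{a_1b_1,\dots,a_{\mu(H)}b_{\mu(H)}\}$ of $H$. Since $N$ is a matching, the vertex sets $V(G)\times\{a_i,b_i\}$, $1\le i\le\mu(H)$, are pairwise disjoint, so $G\times H$ contains $\mu(H)$ pairwise vertex-disjoint subgraphs, each isomorphic to $G\times K_2$.

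To finish I would invoke the elementary fact that $\alpha$ is superadditive over vertex-disjoint subgraphs: if a graph $F$ contains pairwise vertex-disjoint subgraphs $F_1,\dots,F_t$, then $\alpha(F)\ge\sum_{i=1}^{t}\alpha(F_i)$, because for any vertex cover $S$ of $F$ and each $i$ the set $S\cap V(F_i)$ covers all edges of $F_i$, whence $|S\cap V(F_i)|\ge\alpha(F_i)$, and these sets are pairwise disjoint. Applying this with the $\mu(H)$ copies of $G\times K_2$ found above gives $\alpha(G\times H)\ge\mu(H)\,\alpha(G\times K_2)$, which closes the chain. I do not expect a genuine obstacle; the only two points that need (brief) care are the identification of $G\times H$ restricted to $V(G)\times\{a,b\}$ with $G\times K_2$ and the superadditivity of $\alpha$, both of which are one-liners.
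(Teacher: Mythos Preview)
Your proposal is correct and follows essentially the same route as the paper: both arguments take a maximum matching $\{a_ib_i\}$ of $H$, observe that the induced subgraph on each $V(G)\times\{a_i,b_i\}$ is a copy of $G\times K_2$, and deduce the first inequality by intersecting a minimum vertex cover of $G\times H$ with these disjoint vertex sets; the middle equality via K\"{o}nig--Egerv\'{a}ry and the explicit $2\mu(G)$-matching in $G\times K_2$ are likewise identical. Your ``superadditivity'' phrasing is just a convenient repackaging of the paper's intersection argument.
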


\begin{proof}
Let  $M=\{u_iv_i: \;i=1,...,k\}$, be a maximum matching of $H$. Let $A$ be a minimum vertex cover of $G\times H$. Let $A_i=A\cap(V(G)\times \{u_i,v_i\})$, $i\in\{1,,...,k\}$. Notice that $A_i\neq \emptyset$ for every $i\in\{1,...,k\}$. Since $A_i \cap A_j \ne \emptyset$ for $i\ne j$,  $|A_{1}|+|A_{2}|+...+|A_{k}|\le |A|$. Moreover, $A_i$ is a vertex cover of $G\times \langle\{u_i,v_i\}\rangle \cong G\times K_2$ for every $i\in \{1,...,k\}$. Hence $k\alpha(G\times K_2)\le \sum_{i=1}^{k}|A_{i}|\le|A|=\alpha(G\times H)$. As a result, $$\alpha(G\times H)\ge \mu(H)\alpha(G\times K_2).$$
Since $G\times K_2$ is a bipartite graph,  it follows from Theorem \ref{konig-egervary} that $$\alpha(G\times K_2)=\mu(G\times K_2).$$ Finally, every matching $\{x_iy_i: \;i\in \{1,...,k'\}\}$ of $G$ induces a matching $\{(x_i,a)(y_i,b),(y_i,a)(x_i,b): \;i\in\{1,...,k'\}\}$ of $G\times K_2$, where $\{a,b\}$ is the vertex set of $K_2$. Thus, $\mu(G\times K_2)\ge 2\mu(G)$. The completes the proof.
\end{proof}

Note that if $G$ and $H$ are two graphs (of order  $n_1$ and $n_2$, respectively) having a perfect matching and if at least one of them is bipartite, then $G\times H$ is bipartite and $\frac{n_1n_2}{2}\ge \mu(G\times H)=\alpha(G\times H)\ge 2\mu(G)\mu(H)=\frac{n_1n_2}{2}$. So, in this case we have $$\alpha(G\times H)= \mu(H)\alpha(G\times K_2)=\mu(H)\mu(G\times K_2)= 2\mu(G)\mu(H)=\frac{n_1n_2}{2}.$$

The following result is a direct consequence of Lemma \ref{lemmaVertexcoverGxK2} and Theorem \ref{eq_oellerman_direct}.

\begin{theorem}
Let $G$ and $H$ be two connected graphs.
$$dim_s(G\square H)\ge \mu(H_{SR})dim_s(G\square K_2)\ge 2\mu(G_{SR})\mu(H_{SR}).$$
\end{theorem}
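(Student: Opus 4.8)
The plan is to derive the displayed chain of inequalities directly from the two tools just established, applying Lemma~\ref{lemmaVertexcoverGxK2} with $G_{SR}$ in place of $G$ and $H_{SR}$ in place of $H$, and then translating everything back into strong metric dimension via Theorem~\ref{eq_oellerman_direct}. First I would note that if $G$ and $H$ are connected, then $G_{SR}$ and $H_{SR}$ are well-defined graphs (with vertex sets $\partial(G)$ and $\partial(H)$). To invoke Lemma~\ref{lemmaVertexcoverGxK2} one needs $G_{SR}$ and $H_{SR}$ to be non-trivial and non-empty; I would first dispose of the degenerate cases (for instance, if one of the strong resolving graphs has no edges the corresponding $dim_s$ term is $0$ and the inequality holds trivially), so that in the remaining case the lemma applies verbatim.

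Next I would write down the two consequences of the lemma that we need. Applying Lemma~\ref{lemmaVertexcoverGxK2} to $G_{SR}$ and $H_{SR}$ gives
\begin{equation*}
\alpha(G_{SR}\times H_{SR})\ge \mu(H_{SR})\,\alpha(G_{SR}\times K_2)\ge 2\mu(G_{SR})\mu(H_{SR}).
\end{equation*}
By Theorem~\ref{eq_oellerman_direct}, the left-hand side is exactly $dim_s(G\square H)$. It therefore remains only to identify $\alpha(G_{SR}\times K_2)$ with $dim_s(G\square K_2)$. Since $K_2$ is connected and $(K_2)_{SR}\cong K_2$ (indeed $K_2$ is itself $2$-antipodal, or one checks directly that its two vertices are mutually maximally distant), Theorem~\ref{eq_oellerman_direct} applied to the pair $G,K_2$ yields $dim_s(G\square K_2)=\alpha(G_{SR}\times (K_2)_{SR})=\alpha(G_{SR}\times K_2)$. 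Substituting this into the chain above gives
\begin{equation*}
dim_s(G\square H)\ge \mu(H_{SR})\,dim_s(G\square K_2)\ge 2\mu(G_{SR})\mu(H_{SR}),
\end{equation*}
which is the claimed statement.

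The only genuinely delicate point is the identification $(K_2)_{SR}\cong K_2$, which must hold for the application of Theorem~\ref{eq_oellerman_direct} to $G\square K_2$ to produce $\alpha(G_{SR}\times K_2)$ rather than $\alpha(G_{SR}\times(K_2)_{SR})$ with some other small graph; but this is immediate since the two vertices of $K_2$ are simplicial, so $\sigma(K_2)=\partial(K_2)=V(K_2)$ and $(K_2)_{SR}\cong K_{|\partial(K_2)|}=K_2$. Everything else is a direct substitution, so I expect the proof to be short: state the degenerate-case reduction in a sentence, quote Lemma~\ref{lemmaVertexcoverGxK2} for $G_{SR}$ and $H_{SR}$, quote Theorem~\ref{eq_oellerman_direct} twice, and chain the inequalities.
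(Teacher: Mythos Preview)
Your proposal is correct and matches the paper's approach exactly: the paper states the theorem as a direct consequence of Lemma~\ref{lemmaVertexcoverGxK2} and Theorem~\ref{eq_oellerman_direct}, and you have supplied precisely those substitutions, together with the identification $(K_2)_{SR}\cong K_2$ needed to rewrite $\alpha(G_{SR}\times K_2)$ as $dim_s(G\square K_2)$. Your attention to the degenerate case is more explicit than the paper's, but note that for any connected graph on at least two vertices a diametral pair is mutually maximally distant, so $G_{SR}$ and $H_{SR}$ always have at least one edge and the degenerate case does not actually arise.
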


Examples of graphs where  $dim_s(G\square H)= \mu(H_{SR})dim_s(G\square K_2)= 2\mu(G_{SR})\mu(H_{SR})=\frac{|\partial(G)||\partial(H)|}{2}$ are given in Corollary  \ref{corollaryTheoremMatching}.

\subsection{Strong metric dimension of Hamming graphs}

Now we study a particular case of Cartesian product graphs, the so called Hamming graphs. The Hamming graph $H_{k,n}$ is defined as the Cartesian product of $k$ copies of the complete graph $K_n$, {\em i.e.},
$$\begin{array}{c}
            H_{k,n}=\underbrace{K_n\;\Box\; K_n\;\Box\; ...\; \Box\; K_n} \\
            \;\;\;\;\;\;\;\;\;\;k \mbox{ times}
          \end{array}
$$

The strong metric dimension of Hamming graphs was obtained in \cite{strong-hamming} where the authors gave a long and complicated  proof.
 Here we give a simple proof for this result, using Theorem \ref{cartesian_directed} and the next result due to Valencia-Pabon and Vera \cite{valencia}.

\begin{lemma}\label{indep-direct-hamming}
For any positive integers, $n_1,n_2,...n_r$,
$$\beta(K_{n_1}\times K_{n_2}\times ... \times K_{n_r})=\max_{1\le i\le r}\left\{\frac{n_1n_2...n_r}{n_i}\right\}.$$
\end{lemma}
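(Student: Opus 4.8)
The plan is to establish the formula by induction on $r$, bootstrapping from Lemma \ref{lemma-vertex-transitive}, which already handles the two-factor case. The only structural fact I would need is that a direct product of vertex-transitive graphs is again vertex-transitive: if $\phi$ is an automorphism of $G$ and $\psi$ an automorphism of $H$, then $(x,y)\mapsto(\phi(x),\psi(y))$ is an automorphism of $G\times H$, and composites of such maps act transitively on $V(G)\times V(H)$ once the $\phi$'s and $\psi$'s do. Hence every partial product $K_{n_1}\times\cdots\times K_{n_t}$ is vertex-transitive — and Lemma \ref{lemma-vertex-transitive} does not require connectedness, which matters since a partial product involving several copies of $K_2$ is disconnected — so Lemma \ref{lemma-vertex-transitive} is applicable at each stage.

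First I would dispose of the base case $r=1$, where $\beta(K_{n_1})=1=n_1/n_1$. For the inductive step, put $G=K_{n_1}\times\cdots\times K_{n_{r-1}}$, of order $N=n_1\cdots n_{r-1}$, and apply Lemma \ref{lemma-vertex-transitive} to $G\times K_{n_r}$:
$$\beta\bigl(K_{n_1}\times\cdots\times K_{n_r}\bigr)=\max\bigl\{\,N\,\beta(K_{n_r}),\ n_r\,\beta(G)\,\bigr\}.$$
Since $\beta(K_{n_r})=1$ the first entry equals $N=\dfrac{n_1\cdots n_r}{n_r}$; and by the induction hypothesis $\beta(G)=\max_{1\le i\le r-1}\dfrac{N}{n_i}$, so the second entry equals $\max_{1\le i\le r-1}\dfrac{n_rN}{n_i}=\max_{1\le i\le r-1}\dfrac{n_1\cdots n_r}{n_i}$. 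The maximum of the two entries is then exactly $\max_{1\le i\le r}\dfrac{n_1\cdots n_r}{n_i}$, closing the induction.

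I do not expect a genuine obstacle here: the delicate points are just the ones flagged above (vertex-transitivity is inherited by direct products, and no connectedness is needed), plus the harmless degenerate case where some $n_i=1$, in which both sides equal $n_1\cdots n_r$. If one wished to avoid Lemma \ref{lemma-vertex-transitive} altogether, one inequality is immediate: fixing the $i$-th coordinate of the vertices to a common value yields an independent set of size $\prod_{j\ne i}n_j$, because adjacency in a direct product of complete graphs forces the two vertices to differ in every coordinate. The reverse inequality can be extracted from Hoffman's ratio bound applied to the $\prod_i(n_i-1)$-regular product graph, whose least eigenvalue is $-\prod_{j\ne i_0}(n_j-1)$ with $n_{i_0}=\min_i n_i$ (the spectrum of a direct product being the set of products of one eigenvalue from each factor, and $K_{n_i}$ contributing only $n_i-1$ or $-1$); the ratio bound then collapses to $\prod_i n_i\cdot\tfrac{1}{n_{i_0}}=\max_i\prod_{j\ne i}n_j$, so on that route the spectral bookkeeping is the one step that takes real effort.
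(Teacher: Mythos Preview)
The paper does not actually prove this lemma; it is quoted as a result of Valencia-Pabon and Vera \cite{valencia} and simply cited without argument. Your induction is correct and shows that the formula follows directly from Lemma \ref{lemma-vertex-transitive} (Zhang's result for arbitrary vertex-transitive factors), once one observes that a direct product of vertex-transitive graphs is again vertex-transitive --- the coordinatewise automorphism argument you give is exactly right, and you are also right that no connectedness hypothesis is needed. This is in fact a mild simplification of the paper's exposition: instead of importing two independent results from the literature, one can derive the complete-graph case of Lemma \ref{indep-direct-hamming} from the general vertex-transitive case already quoted as Lemma \ref{lemma-vertex-transitive}. Your side remark about recovering the upper bound from Hoffman's ratio bound is also correct (the spectrum of the product is the set of products of factor eigenvalues, and the least such product is indeed $-\prod_{j\ne i_0}(n_j-1)$), though that route does more work than is needed here.
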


By Theorem \ref{cartesian_directed}, it follows that for any positive integers, $n_1,n_2,...n_r$, $$(K_{n_1}\Box K_{n_2}\Box ... \Box K_{n_r})_{SR}\cong K_{n_1}\times K_{n_2}\times ... \times K_{n_r}.$$ Therefore, equation (\ref{eq_oellerman_gallai}) and Lemma \ref{indep-direct-hamming} give the following result.

\begin{theorem}
For any positive integers, $n_1,n_2,...n_r$,
$$dim_s(K_{n_1}\square K_{n_2}\square ... \square K_{n_r})=n_1n_2...n_r-\max_{1\le i\le r}\left\{\frac{n_1n_2...n_r}{n_i}\right\}.$$
\end{theorem}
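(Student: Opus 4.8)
The plan is to apply the machinery already assembled in this section, so the argument is essentially a two-line reduction. First I would invoke Theorem~\ref{cartesian_directed} iteratively (or, more cleanly, the observation stated just before the theorem): since $(K_n)_{SR}\cong K_n$, an easy induction on $r$ using $(G\square H)_{SR}\cong G_{SR}\times H_{SR}$ gives
$$(K_{n_1}\square K_{n_2}\square \cdots \square K_{n_r})_{SR}\cong K_{n_1}\times K_{n_2}\times \cdots \times K_{n_r}.$$
The only subtlety in this step is that Theorem~\ref{cartesian_directed} is stated for two graphs, so I must note that a Cartesian product of connected graphs is connected (hence the hypotheses of the theorem are met at each stage of the induction) and that the direct product is associative up to isomorphism, which lets me collapse the nested two-factor products into the single $r$-fold direct product of complete graphs.

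Next I would combine this with the vertex-cover reformulation. By Theorem~\ref{lem_oellerman}, $dim_s(K_{n_1}\square\cdots\square K_{n_r})=\alpha\big(K_{n_1}\times\cdots\times K_{n_r}\big)$. Then Gallai's theorem (Theorem~\ref{th_gallai}) converts this to $n_1 n_2\cdots n_r-\beta\big(K_{n_1}\times\cdots\times K_{n_r}\big)$, since the $r$-fold direct product of complete graphs has order $n_1 n_2\cdots n_r$. Finally, Lemma~\ref{indep-direct-hamming} evaluates the independence number as $\max_{1\le i\le r}\{n_1 n_2\cdots n_r/n_i\}$, and substituting gives exactly
$$dim_s(K_{n_1}\square K_{n_2}\square \cdots \square K_{n_r})=n_1 n_2\cdots n_r-\max_{1\le i\le r}\left\{\frac{n_1 n_2\cdots n_r}{n_i}\right\},$$
as claimed. (Equivalently, one can cite equation~(\ref{eq_oellerman_gallai}) directly, which already packages the Theorem~\ref{lem_oellerman}/Gallai combination for Cartesian products.)

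The main obstacle — really the only thing requiring care — is the bookkeeping in the inductive step: verifying that repeated application of Theorem~\ref{cartesian_directed} is legitimate. Concretely, I would set $G=K_{n_1}\square\cdots\square K_{n_{r-1}}$ and $H=K_{n_r}$; by the inductive hypothesis $G_{SR}\cong K_{n_1}\times\cdots\times K_{n_{r-1}}$, and $G$ is connected as a Cartesian product of connected graphs, so Theorem~\ref{cartesian_directed} applies to give $(G\square H)_{SR}\cong G_{SR}\times H_{SR}\cong (K_{n_1}\times\cdots\times K_{n_{r-1}})\times K_{n_r}$, which equals the $r$-fold direct product by associativity. Everything after that is a direct substitution into already-proved results, with no genuine difficulty.
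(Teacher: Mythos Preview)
Your proposal is correct and follows essentially the same route as the paper: identify $(K_{n_1}\square\cdots\square K_{n_r})_{SR}\cong K_{n_1}\times\cdots\times K_{n_r}$ via Theorem~\ref{cartesian_directed}, then combine the vertex-cover reformulation with Gallai's theorem and Lemma~\ref{indep-direct-hamming}. If anything you are slightly more careful than the paper, since you spell out the inductive use of Theorem~\ref{cartesian_directed} and apply Theorem~\ref{lem_oellerman} and Gallai directly (which works for the $r$-fold product), whereas the paper simply cites equation~(\ref{eq_oellerman_gallai}), nominally a two-factor statement.
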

Hence, the above result gives as a corollary the value for the strong metric dimension of Hamming graphs.
\begin{corollary}
For any Hamming graph $H_{k,n}$, $dim_s(H_{k,n})=(n-1)n^{k-1}$.
\end{corollary}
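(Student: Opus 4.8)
The plan is to simply specialize the immediately preceding theorem to the case of $k$ equal factors. Recall that by definition the Hamming graph $H_{k,n}$ equals $K_n\,\square\,K_n\,\square\,\cdots\,\square\,K_n$ with $k$ factors, so it is an instance of $K_{n_1}\square K_{n_2}\square\cdots\square K_{n_r}$ with $r=k$ and $n_1=n_2=\cdots=n_k=n$. Hence the theorem applies verbatim and the only work left is an elementary arithmetic simplification of the closed formula.

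First I would compute the product of the factor orders: $n_1 n_2\cdots n_k=n^k$. Next I would evaluate the maximum term $\max_{1\le i\le k}\{n^k/n_i\}$; since every $n_i$ equals $n$, each summand is $n^k/n=n^{k-1}$, so the maximum is just $n^{k-1}$. Substituting these two quantities into the formula $dim_s(K_{n_1}\square\cdots\square K_{n_k})=n_1\cdots n_k-\max_i\{n_1\cdots n_k/n_i\}$ gives
$$dim_s(H_{k,n})=n^k-n^{k-1}=(n-1)n^{k-1},$$
which is the claimed value.

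There is essentially no obstacle here: all the substantive content — the identification $(K_{n_1}\square\cdots\square K_{n_r})_{SR}\cong K_{n_1}\times\cdots\times K_{n_r}$ via Theorem~\ref{cartesian_directed}, the Gallai-type identity \eqref{eq_oellerman_gallai}, and the independence-number formula of Lemma~\ref{indep-direct-hamming} — is already packaged inside the preceding theorem. The corollary is a one-line consequence, and the "hard part," if any, is merely making sure the degenerate-looking expression $\max_i\{n^k/n_i\}$ is read correctly as $n^{k-1}$ rather than, say, $n^k$.
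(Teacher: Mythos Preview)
Your proof is correct and matches the paper's approach exactly: the corollary is stated without proof in the paper, merely as the specialization of the preceding theorem to $n_1=\cdots=n_k=n$, which yields $n^k-n^{k-1}=(n-1)n^{k-1}$.
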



\subsection{Relating the strong metric dimension of the Cartesian product of graphs to the strong metric dimension of its factor graphs}

\begin{lemma}\label{Lemma-Jha}{\em \cite{Jha}}
For any graphs $G$ and $H$ of orders $n_1$ and $n_2$, respectively,
$$\beta(G\times H)\ge \max\{ n_2\beta(G), n_1\beta(H)\}.$$
\end{lemma}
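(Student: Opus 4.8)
The plan is to produce, from an independence set of size $\beta(G)$ in $G$, an independent set of size $n_2\beta(G)$ in $G\times H$, and symmetrically one of size $n_1\beta(H)$; taking the larger of the two yields the claimed bound. So first I would fix a $\beta(G)$-set $S\subseteq V(G)$, that is, an independent set in $G$ with $|S|=\beta(G)$. Consider the ``cylinder'' $T = S\times V(H)\subseteq V(G\times H)$, which has exactly $n_2\beta(G)$ vertices. I claim $T$ is independent in $G\times H$: if $(a,b)$ and $(c,d)$ are both in $T$ with $a,c\in S$, then by definition of the direct product they are adjacent only if $ac\in E(G)$, which is impossible since $S$ is independent in $G$ (this also covers the case $a=c$, since then there is no edge at all). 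Hence $\beta(G\times H)\ge n_2\beta(G)$. Swapping the roles of $G$ and $H$, using a $\beta(H)$-set and the cylinder $V(G)\times S'$, gives $\beta(G\times H)\ge n_1\beta(H)$, and therefore $\beta(G\times H)\ge\max\{n_2\beta(G),n_1\beta(H)\}$.

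The only point that requires a moment's care is the degenerate case $a=c$ in the adjacency check: two vertices sharing a first coordinate are never adjacent in the direct product, so they certainly cannot violate independence; likewise a vertex is not adjacent to itself. With that noted, the argument is completely routine. Indeed there is essentially no obstacle here — the statement is the ``easy half'' of the identity in Lemma \ref{lemma-vertex-transitive} (which asserts equality under the extra hypothesis of vertex-transitivity), and the inequality direction holds for arbitrary graphs precisely because the product of an independent set with a full vertex set is always independent. The reference to \cite{Jha} is for attribution rather than for any delicate content. $\hfill\Box$
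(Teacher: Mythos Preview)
Your argument is correct: if $S$ is independent in $G$ then $S\times V(H)$ is independent in $G\times H$ because adjacency in the direct product requires adjacency in both coordinates, and the first-coordinate condition already fails. The symmetric construction gives the other term, and the maximum follows.

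There is nothing to compare against in the paper itself: Lemma~\ref{Lemma-Jha} is stated with a citation to \cite{Jha} and no proof is given in the present paper. Your write-up supplies exactly the standard one-line argument that is implicit in that reference, so it is entirely adequate. One cosmetic remark: the parenthetical about $a=c$ is fine but unnecessary, since non-adjacency of $(a,b)$ and $(a,d)$ in $G\times H$ already follows from the failure of $aa\in E(G)$ (graphs here are simple), which is the same clause you invoke for distinct $a,c\in S$.
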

\begin{theorem}
For any connected graphs $G$ and $H$,
$$ dim_s(G\square H)\le \min \{dim_s(G)|\partial(H)|,|\partial(G)|dim_s(H)\}.$$
\end{theorem}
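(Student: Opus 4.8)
The plan is to combine the Gallai-type identity~\eqref{eq_oellerman_gallai} with the lower bound on the independence number of a direct product recorded in Lemma~\ref{Lemma-Jha}. Recall that by Theorem~\ref{eq_oellerman_direct} and Gallai's theorem we have $dim_s(G\square H)=|\partial(G)||\partial(H)|-\beta(G_{SR}\times H_{SR})$, so an upper bound on $dim_s(G\square H)$ is exactly a lower bound on $\beta(G_{SR}\times H_{SR})$. The graph $G_{SR}$ has order $|\partial(G)|$ and $H_{SR}$ has order $|\partial(H)|$, so Lemma~\ref{Lemma-Jha} applied to the pair $G_{SR}$, $H_{SR}$ gives $\beta(G_{SR}\times H_{SR})\ge\max\{|\partial(H)|\,\beta(G_{SR}),\,|\partial(G)|\,\beta(H_{SR})\}$.

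Substituting this into the displayed identity yields
\begin{align*}
dim_s(G\square H)&=|\partial(G)||\partial(H)|-\beta(G_{SR}\times H_{SR})\\
&\le |\partial(G)||\partial(H)|-\max\{|\partial(H)|\beta(G_{SR}),|\partial(G)|\beta(H_{SR})\}\\
&=\min\{|\partial(G)|\partial(H)|-|\partial(H)|\beta(G_{SR}),\,|\partial(G)||\partial(H)|-|\partial(G)|\beta(H_{SR})\}\\
&=\min\{|\partial(H)|(|\partial(G)|-\beta(G_{SR})),\,|\partial(G)|(|\partial(H)|-\beta(H_{SR}))\}.
\end{align*}
Finally, by Gallai's theorem applied to $G_{SR}$ and to $H_{SR}$ together with Theorem~\ref{lem_oellerman} we have $|\partial(G)|-\beta(G_{SR})=\alpha(G_{SR})=dim_s(G)$ and likewise $|\partial(H)|-\beta(H_{SR})=dim_s(H)$, which turns the last line into $\min\{|\partial(H)|dim_s(G),|\partial(G)|dim_s(H)\}$, as claimed.

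The only genuine content is Lemma~\ref{Lemma-Jha}, which is quoted, so I do not expect a real obstacle; the mild care needed is bookkeeping, namely correctly matching the roles of $n_1,n_2$ in the lemma to the orders $|\partial(G)|,|\partial(H)|$ of the two strong resolving graphs and pushing the $\max$ through the subtraction to obtain a $\min$. One should also note that the hypothesis "connected" on $G$ and $H$ guarantees that $dim_s(G\square H)$, $G_{SR}$ and $H_{SR}$ are well defined and that $\partial(G),\partial(H)$ are nonempty, so the formulas make sense; if either $G_{SR}$ or $H_{SR}$ were edgeless the bound still holds trivially since then the corresponding $dim_s$ is zero and the direct product is edgeless. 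This completes the proof sketch.
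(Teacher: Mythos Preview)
Your proposal is correct and follows essentially the same approach as the paper: both apply Lemma~\ref{Lemma-Jha} to $G_{SR}$ and $H_{SR}$, convert via Gallai's theorem, and invoke Theorem~\ref{eq_oellerman_direct}. The paper compresses your chain of equalities into the single line $\alpha(G_{SR}\times H_{SR})\le \min\{|\partial(H)|\alpha(G_{SR}),|\partial(G)|\alpha(H_{SR})\}$, but the content is identical (note a small typo in your display: $|\partial(G)|\partial(H)|$ should read $|\partial(G)||\partial(H)|$).
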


\begin{proof}
By Lemma \ref{Lemma-Jha},
$$
\beta(G_{SR}\times H_{SR})\ge \max\{  |\partial(H)|\beta (G_{SR}),  |\partial(G)|\beta (H_{SR})\}.
$$
Thus, by Gallai's theorem,
$$
\alpha(G_{SR}\times H_{SR})\le \min \{  |\partial(H)|\alpha (G_{SR}),  |\partial(G)|\alpha (H_{SR})\}.
$$
The result now follows from Theorem \ref{eq_oellerman_direct}.
\end{proof}

Several examples of graphs where $dim_s(G\square H)= \min \{dim_s(G)|\partial(H)|,|\partial(G)|dim_s(H)\}$ are given in Corollary \ref{corollaryTheoremtransitive}.

\begin{lemma}\label{cota-sup-indep-direct}\em \cite{spacapan}
For any graphs $G$ and $H$ of orders $n_1$ and $n_2$, respectively,
$$\beta(G\times H)\le n_2\beta(G)+n_1\beta(H)-\beta(G)\beta(H).$$
\end{lemma}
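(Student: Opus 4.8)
The plan is to rephrase the bound in terms of vertex cover numbers and then to dissect a maximum independent set of $G\times H$ along minimum vertex covers of the two factors. By Gallai's theorem (Theorem~\ref{th_gallai}) applied to $G$, to $H$, and to $G\times H$ (of order $n_1n_2$), the asserted inequality is equivalent to $\alpha(G\times H)\ge\alpha(G)\alpha(H)$; so it suffices to show that every maximum independent set $W$ of $G\times H$ has $|W|\le n_1n_2-\alpha(G)\alpha(H)$. Fix a maximum independent set $A$ of $G$ and put $B=V(G)\setminus A$, a minimum vertex cover of $G$ with $|A|=\beta(G)$, $|B|=\alpha(G)$; fix a maximum independent set $A'$ of $H$ and put $B'=V(H)\setminus A'$. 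The four sets $A\times A'$, $A\times B'$, $B\times A'$, $B\times B'$ partition $V(G\times H)$, and $A\times A'$ is itself independent in $G\times H$ because any edge inside it would project to an edge of $G$ lying inside $A$. Writing $W_{XY}=W\cap(X\times Y)$, I would reduce everything to the single ``corner estimate''
\[
|W_{BB'}|\ \le\ \bigl(|A\times B'|-|W_{AB'}|\bigr)+\bigl(|B\times A'|-|W_{BA'}|\bigr).
\]
Indeed, adding $|W_{AA'}|+|W_{AB'}|+|W_{BA'}|$ to both sides and using $|W_{AA'}|\le|A|\,|A'|=\beta(G)\beta(H)$ gives $|W|\le\beta(G)\beta(H)+|A\times B'|+|B\times A'|=\beta(G)n_2+\alpha(G)\beta(H)=n_1n_2-\alpha(G)\alpha(H)$, as required.

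To prove the corner estimate I would exhibit a matching saturating $W_{BB'}$ in the auxiliary bipartite graph $\Gamma$ with parts $W_{BB'}$ and $\bigl((A\times B')\setminus W\bigr)\cup\bigl((B\times A')\setminus W\bigr)$, in which $(b,b')\in W_{BB'}$ is joined to $(a,b')$ for every neighbour $a$ of $b$ in $G$ with $a\in A$ and $(a,b')\notin W$, and to $(b,a')$ for every neighbour $a'$ of $b'$ in $H$ with $a'\in A'$ and $(b,a')\notin W$. Two facts make this work. Since $B$ is a \emph{minimum} vertex cover of $G$, every vertex of $B$ has a neighbour in $A$, and likewise every vertex of $B'$ has a neighbour in $A'$. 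And since $W$ is independent, whenever $ab\in E(G)$ and $a'b'\in E(H)$ the vertices $(a,b')$ and $(b,a')$ are adjacent in $G\times H$ and so are not both in $W$. Combining these, every $(b,b')\in W_{BB'}$ has positive degree in $\Gamma$; more precisely, for each such vertex either \emph{all} the pairs $(a,b')$ with $a\in A$ a neighbour of $b$, or \emph{all} the pairs $(b,a')$ with $a'\in A'$ a neighbour of $b'$, lie outside $W$ and hence are neighbours of $(b,b')$ in $\Gamma$.

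The hard part is verifying Hall's condition for $\Gamma$. The naive attempt — sending each corner vertex $(b,b')$ to one fixed neighbour in $A$ or in $A'$ — need not be injective, since two corner vertices sharing the coordinate $b'$ may be forced onto the same pair $(a,b')$. The real argument must route the corner vertices carefully: given $S\subseteq W_{BB'}$, split $S$ into the vertices that can be sent ``horizontally'' (their $A$-neighbours in $G$, paired with $b'$, avoid $W$) and those that can be sent ``vertically'', using the freedom available for vertices that can go either way, and then bound $|N_\Gamma(S)|$ from below via the minimum-vertex-cover structure of $B$ in $G$ and of $B'$ in $H$. Making this case analysis airtight — in particular handling the situation where the bipartite graph between $B$ and $A$ in $G$ (or between $B'$ and $A'$ in $H$) does not itself satisfy Hall's condition, as happens when a factor is an odd cycle — is the technical heart of the proof and the step I expect to cost the most effort; this is presumably where the argument of \cite{spacapan} does its real work.
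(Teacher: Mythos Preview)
The paper does not give a proof of this lemma; it is quoted verbatim from \v{S}pacapan \cite{spacapan} and used as a black box. So there is no ``paper's own proof'' to compare your attempt against.

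On your attempt itself: the Gallai reformulation is correct --- the inequality is equivalent to $\alpha(G\times H)\ge\alpha(G)\alpha(H)$ --- and your reduction of this to the corner estimate
\[
|W_{BB'}|\ \le\ \bigl(|A\times B'|-|W_{AB'}|\bigr)+\bigl(|B\times A'|-|W_{BA'}|\bigr)
\]
is algebraically sound. The dichotomy you observe (for each $(b,b')\in W_{BB'}$, either all horizontal targets $(a,b')$ or all vertical targets $(b,a')$ lie outside $W$) is also correct. But the proposal stops exactly where the difficulty lies: you explicitly leave the Hall verification for $\Gamma$ unproven, and this is a genuine gap, not a routine check. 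Your own odd-cycle remark already shows the $B$--$A$ bipartite graph in a factor can violate Hall's condition; the same happens even more drastically for $G=K_n$, where $|A|=1$ and $|B|=n-1$, so many corner vertices compete for a single horizontal target. Some further idea is required, and you have not supplied one.

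Two further cautions. First, the corner estimate is \emph{strictly stronger} than what you need: it discards the slack $|A\times A'|-|W_{AA'}|$, so even if the lemma is true the corner estimate might fail for some choices of $A,A'$ and some non-maximum $W$; you have not argued that it holds in the generality you assert. Second, since the paper only needs this lemma as an input to the next theorem, the appropriate course here is simply to cite \cite{spacapan}, as the authors do, rather than to attempt an independent proof.
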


\begin{theorem}
For any connected graphs $G$ and $H$,
$$ dim_s(G\square H)\ge dim_s(G)dim_s(H).$$
\end{theorem}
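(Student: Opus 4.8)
The plan is to combine the Gallai-type identity $(\ref{eq_oellerman_gallai})$ with the upper bound on the independence number of a direct product provided by Lemma \ref{cota-sup-indep-direct}. Since, by the definition of the strong resolving graph, $G_{SR}$ and $H_{SR}$ are graphs of orders $|\partial(G)|$ and $|\partial(H)|$ respectively, I would first apply Lemma \ref{cota-sup-indep-direct} to them to get
$$\beta(G_{SR}\times H_{SR})\le |\partial(H)|\,\beta(G_{SR})+|\partial(G)|\,\beta(H_{SR})-\beta(G_{SR})\beta(H_{SR}).$$
Substituting this into $(\ref{eq_oellerman_gallai})$ then yields
\begin{align*}
dim_s(G\square H)&=|\partial(G)||\partial(H)|-\beta(G_{SR}\times H_{SR})\\
&\ge |\partial(G)||\partial(H)|-|\partial(H)|\beta(G_{SR})-|\partial(G)|\beta(H_{SR})+\beta(G_{SR})\beta(H_{SR}).
\end{align*}

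Next I would eliminate the independence numbers of the strong resolving graphs in favour of strong metric dimensions. By Gallai's theorem (Theorem \ref{th_gallai}) applied to $G_{SR}$ and $H_{SR}$, together with Theorem \ref{lem_oellerman}, one has $\beta(G_{SR})=|\partial(G)|-\alpha(G_{SR})=|\partial(G)|-dim_s(G)$ and likewise $\beta(H_{SR})=|\partial(H)|-dim_s(H)$. Writing $p=|\partial(G)|$, $q=|\partial(H)|$, $a=dim_s(G)$ and $b=dim_s(H)$, the right-hand side above becomes $pq-q(p-a)-p(q-b)+(p-a)(q-b)$, which a direct expansion collapses to exactly $ab$. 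Hence $dim_s(G\square H)\ge dim_s(G)\,dim_s(H)$, as claimed.

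There is no genuine obstacle here: the statement is essentially a two-line consequence of results already in hand, namely Theorem \ref{eq_oellerman_direct}, Gallai's theorem, and the \v{S}pacapan bound. The only points that require a little care are correctly identifying the orders of $G_{SR}$ and $H_{SR}$ as $|\partial(G)|$ and $|\partial(H)|$, and carrying out the final cancellation attentively, since each of the four terms $pq$, $-q(p-a)$, $-p(q-b)$, $(p-a)(q-b)$ contributes a $pq$-summand together with several mixed summands that must be verified to cancel. It would also be natural to remark afterward that, combined with the preceding theorem, this sandwiches $dim_s(G\square H)$ between $dim_s(G)\,dim_s(H)$ and $\min\{dim_s(G)|\partial(H)|,\,|\partial(G)|\,dim_s(H)\}$, with Corollary \ref{corollaryTheoremtransitive} supplying instances where the upper bound is attained.
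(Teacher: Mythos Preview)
Your proposal is correct and follows essentially the same route as the paper: apply Lemma \ref{cota-sup-indep-direct} to $G_{SR}$ and $H_{SR}$, then use Gallai's theorem to convert the independence-number inequality into the desired lower bound. The paper states the intermediate step as $\alpha(G_{SR}\times H_{SR})\ge \alpha(G_{SR})\alpha(H_{SR})$ before invoking Theorem \ref{eq_oellerman_direct}, whereas you pass through $(\ref{eq_oellerman_gallai})$ directly to $dim_s$, but the underlying algebra is identical.
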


\begin{proof}
By Lemma \ref{cota-sup-indep-direct},
$$\beta(G_{SR}\times H_{SR})\le |\partial(H)|\beta(G_{SR})+|\partial(G)|\beta(H_{SR})-\beta(G_{SR})\beta(H_{SR}).$$
Thus, by Gallai's theorem,
$$\alpha(G_{SR}\times H_{SR})\ge \alpha(G_{SR})\alpha( H_{SR}).$$
The result now follows from Theorem \ref{eq_oellerman_direct}.
\end{proof}

\subsection{Cartesian product graphs with strong metric dimension 2}

\begin{lemma}\label{lem_path}
For every connected graph $G$ of order $n$, $dim_s(G) = 1$ if and only if $G\cong P_n$.
\end{lemma}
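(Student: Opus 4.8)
The plan is to use Theorem~\ref{lem_oellerman}, which reduces the computation of $dim_s(G)$ to the vertex cover number $\alpha(G_{SR})$, and characterize when $\alpha(G_{SR})=1$. Since a graph has vertex cover number $1$ precisely when it is a nonempty star $K_{1,t}$ (including $K_2$ as the case $t=1$), after discarding isolated vertices, the statement $dim_s(G)=1$ is equivalent to saying that $G_{SR}$ is a star $K_{1,t}$ for some $t\ge 1$. So the task splits into two directions. For the easy direction, if $G\cong P_n$ then its two leaves are mutually maximally distant and every internal vertex lies on the unique shortest path between them but is not in the boundary, so $\partial(P_n)$ consists of the two leaves and $(P_n)_{SR}\cong K_2$; hence $dim_s(P_n)=\alpha(K_2)=1$. (One should also treat $n=1,2$ as trivial or degenerate cases, or simply note $P_2=K_2$ gives $dim_s=1$.)

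For the hard direction, suppose $dim_s(G)=1$, i.e.\ $G_{SR}\cong K_{1,t}$ with center $c$ and leaves $\ell_1,\dots,\ell_t$ (all in $\partial(G)$). First I would argue $t=1$: the leaves $\ell_1,\dots,\ell_t$ are pairwise \emph{not} mutually maximally distant, yet each is MMD with $c$. If $t\ge 2$, I want to derive a contradiction — for instance by showing that among $c,\ell_1,\ell_2$ some pair other than $\{c,\ell_1\},\{c,\ell_2\}$ must also be MMD, contradicting that $G_{SR}$ is a star. A clean way is: pick $\ell_1$ with $d_G(c,\ell_1)$ maximum among the leaves; since $\ell_1$ is maximally distant from $c$ and $c$ is maximally distant from $\ell_1$, and $\ell_2$ is maximally distant from $c$, one can try to show $\ell_1$ is maximally distant from $\ell_2$ and vice versa, or locate another MMD pair. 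So $G_{SR}\cong K_2$, say with MMD pair $u,v$, and $\partial(G)=\{u,v\}$.

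Now the core geometric claim: if $\partial(G)=\{u,v\}$ with $u,v$ mutually maximally distant, then $G$ is a path. The key observation is that $\partial(G)$ always contains every \emph{diametral} pair and, more usefully, for any vertex $w$ there is a vertex of $\partial(G)$ maximally distant from $w$ at distance $d_G(w,\cdot)$ realizing the eccentricity; more precisely, any vertex that is maximally distant from some vertex lies in $\partial(G)$ (a vertex $x$ maximally distant from $y$ need not be MMD with $y$, but iterating — replace $y$ by something maximally distant from $x$ — one reaches an MMD pair, so $x\in\partial(G)$). Hence every vertex of $G$ lies on a shortest $u$–$v$ path: given $w\notin\{u,v\}$, a vertex maximally distant from $w$ is in $\partial(G)=\{u,v\}$, so $\operatorname{ecc}(w)=\max\{d_G(w,u),d_G(w,v)\}$, and combined with the fact that $u,v$ are maximally distant, one shows $d_G(u,w)+d_G(w,v)=d_G(u,v)$, i.e.\ $w\in I_G[u,v]$. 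Thus $V(G)=I_G[u,v]$. Finally, a connected graph all of whose vertices lie on a common shortest path between two vertices, and whose only boundary vertices are the two endpoints, must be that path itself: any chord or extra vertex would create a second vertex maximally distant from some interior vertex, enlarging $\partial(G)$, or would shorten $d_G(u,v)$. Formalizing ``$V(G)=I_G[u,v]$ and $\partial(G)=\{u,v\}$ implies $G=P_n$'' is the main obstacle; I expect to handle it by showing $G$ must be $\{u=w_0,w_1,\dots,w_k=v\}$ with $d_G(u,w_i)=i$, that this is a geodesic, and that no additional edges exist because an edge $w_iw_j$ with $j>i+1$ would make $d_G(u,v)<k$, while the absence of vertices off this path follows since any such vertex would have an eccentricity-realizing vertex outside $\{u,v\}$.
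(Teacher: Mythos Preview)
Your approach is workable in principle but far more elaborate than necessary, and several steps are left as unfulfilled promises. The paper's proof is essentially one line: every strong resolving set is a resolving set (if $w$ strongly resolves $u,v$, say $v\in I_G[u,w]$, then $d_G(u,w)=d_G(u,v)+d_G(v,w)>d_G(v,w)$), so $1\le\dim(G)\le\dim_s(G)$; hence $\dim_s(G)=1$ forces $\dim(G)=1$, and it is classical that $\dim(G)=1$ if and only if $G$ is a path. You never invoke the inequality $\dim(G)\le\dim_s(G)$, which is exactly the shortcut that collapses the whole argument.

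Going your route instead, two steps remain genuine gaps. First, ruling out $G_{SR}\cong K_{1,t}$ with $t\ge 2$: your plan ``pick $\ell_1$ with $d_G(c,\ell_1)$ maximum \dots\ one can try to show $\ell_1$ is maximally distant from $\ell_2$ and vice versa'' is not an argument, and in fact does not work as stated---what one actually needs is to run a maximal-distance chain from one leaf, show it must terminate in $\partial(G)$ at the centre $c$, deduce that the leaf lies in $I_G[c,\ell_j]$, and then contradict the fact that $c$ and $\ell_j$ are mutually maximally distant; for $t\ge 3$ one must also control where the chain terminates among the several boundary vertices. Second, you explicitly flag ``$V(G)=I_G[u,v]$ and $\partial(G)=\{u,v\}$ implies $G\cong P_n$'' as ``the main obstacle''; your derivation of $V(G)=I_G[u,v]$ is fine, but the remaining step---showing each distance level $\{w:d_G(u,w)=i\}$ is a singleton and that no chords exist---still needs a real proof, not just the remark that extra vertices ``would enlarge $\partial(G)$.'' None of this is unfixable, but you are doing a page of structural work where the paper spends one sentence.
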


\begin{proof}
If $G\cong P_n$, then $dim_s(G) = 1$. Since $1 \le dim_(G) \le dim_s(G)$ it follows, if $dim_s(G) = 1$, that $dim(G) =1$. Thus $G$ is a path.
\end{proof}

\begin{lemma}\label{LemmaIntervals} Given $a,x,c\in V(G)$ and $b,y,d\in V(H)$,
$(a,b)\in I_{G\Box H}[(x,y),(c,d)]$ if and only if $a\in I_{G}[x,c]$ and $b\in I_{H}[y,d]$.
\end{lemma}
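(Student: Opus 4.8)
The plan is to prove both directions of the equivalence by working directly from the definition of shortest paths in the Cartesian product, exploiting the well-known distance formula $d_{G\Box H}((x,y),(c,d)) = d_G(x,c) + d_H(y,d)$. Recall that a vertex $(a,b)$ lies in $I_{G\Box H}[(x,y),(c,d)]$ if and only if $d_{G\Box H}((x,y),(a,b)) + d_{G\Box H}((a,b),(c,d)) = d_{G\Box H}((x,y),(c,d))$.

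For the forward direction, I would assume $(a,b)\in I_{G\Box H}[(x,y),(c,d)]$ and rewrite the defining equation using the distance formula: $\big(d_G(x,a) + d_H(y,b)\big) + \big(d_G(a,c) + d_H(b,d)\big) = d_G(x,c) + d_H(y,d)$. Rearranging gives $\big(d_G(x,a)+d_G(a,c) - d_G(x,c)\big) + \big(d_H(y,b)+d_H(b,d) - d_H(y,d)\big) = 0$. By the triangle inequality each bracketed term is nonnegative, so each must vanish individually; that is exactly the statement that $a\in I_G[x,c]$ and $b\in I_H[y,d]$.

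For the reverse direction, I would assume $a\in I_G[x,c]$ and $b\in I_H[y,d]$, so $d_G(x,a)+d_G(a,c) = d_G(x,c)$ and $d_H(y,b)+d_H(b,d) = d_H(y,d)$. Adding these two equalities and regrouping with the distance formula yields $d_{G\Box H}((x,y),(a,b)) + d_{G\Box H}((a,b),(c,d)) = d_{G\Box H}((x,y),(c,d))$, which is precisely $(a,b)\in I_{G\Box H}[(x,y),(c,d)]$.

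The only real ingredient needed is the additive distance formula for the Cartesian product, which is standard; once that is invoked, the argument is a short manipulation using nonnegativity of the ``geodesic defect'' $d(u,w)+d(w,v)-d(u,v)$. I do not anticipate a serious obstacle, but I would be careful to state the distance formula explicitly (or cite it) so that the reader sees where the separation of the $G$- and $H$-coordinates comes from, since that additivity is exactly what makes the two nonnegative terms decouple.
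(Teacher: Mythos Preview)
Your proposal is correct and follows essentially the same approach as the paper: both directions use the additive distance formula $d_{G\Box H}((x,y),(c,d))=d_G(x,c)+d_H(y,d)$ and the characterization of interval membership via additivity of distances. The only cosmetic difference is that for the forward implication the paper argues by contradiction (if $a\notin I_G[x,c]$ or $b\notin I_H[y,d]$ then the sum strictly exceeds $d_G(x,c)+d_H(y,d)$), whereas you phrase the same observation directly as ``two nonnegative terms summing to zero must each vanish.''
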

\begin{proof}
If $a\in I_{G}[x,c]$, then $d_{G}(x,c)=d_{G}(x,a)+d_{G}(a,c)$. Similarly if $b\in I_{H}[y,d]$, then
$d_{H}(y,d)=d_{H}(y,b)+d_{H}(b,d)$. Hence
\begin{align*}d_{G\Box H}((x,y),(c,d))&=d_{G}(x,c)+d_{H}(y,d)\\
&=(d_{G}(x,a)+d_{G}(a,c))+(d_{H}(y,b)+d_{H}(b,d))\\
&=(d_{G}(x,a)+d_{H}(y,b))+(d_{G}(a,c)+d_{H}(b,d))\\
&=d_{G\Box H}((x,y),(a,b))+d_{G\Box H}((a,b),(c,d)).
\end{align*}
Thus $(a,b)\in I_{G\Box H}[(x,y),(c,d)]$.

Conversely,  if $(a,b)\in I_{G\Box H}[(x,y),(c,d)]$, then
\begin{align*}d_{G\Box H}((x,y),(c,d))&=d_{G\Box H}((x,y),(a,b))+d_{G\Box H}((a,b),(c,d))\\
&=(d_{G}(x,a)+d_{H}(y,b))+(d_{G}(a,c)+d_{H}(b,d))\\
&=(d_{G}(x,a)+d_{G}(a,c))+(d_{H}(y,b)+d_{H}(b,d)).
\end{align*}
Now, if  $a\not\in I_{G}[x,c]$ or $b\not\in I_{G}[y,d]$, then
$d_{G\Box H}((x,y),(c,d))> d_{G}(x,c)+d_{H}(y,d),$ a contradiction.
\end{proof}

\begin{proposition}
Let $G$ and $H$ be two connected graphs of order at least $2$. Then $dim_s(G\square H) = 2$ if and only if $G$ and $H$ are both paths.
\end{proposition}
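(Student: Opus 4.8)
The plan is to prove both directions. For the "if" direction, suppose $G\cong P_{n_1}$ and $H\cong P_{n_2}$ with $n_1,n_2\ge 2$. Then $G_{SR}\cong K_2$ and $H_{SR}\cong K_2$ (each path has exactly two leaves, which are mutually maximally distant), so by Theorem~\ref{cartesian_directed}, $(G\square H)_{SR}\cong K_2\times K_2\cong 2K_2$, a graph with two disjoint edges. Hence $\alpha((G\square H)_{SR})=2$, and by Theorem~\ref{eq_oellerman_direct}, $dim_s(G\square H)=2$.

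For the "only if" direction, suppose $dim_s(G\square H)=2$. First I would rule out $dim_s(G\square H)=1$: by Lemma~\ref{lem_path}, $dim_s(G\square H)=1$ would force $G\square H$ to be a path, which is impossible since a nontrivial Cartesian product contains a $4$-cycle and hence is not a path; so indeed the value $2$ is the genuine minimum among products. Next, apply the lower bound $dim_s(G\square H)\ge dim_s(G)\,dim_s(H)$ (the last theorem of subsection on relating to factors). Since both factors have order at least $2$, each satisfies $dim_s(G)\ge 1$ and $dim_s(H)\ge 1$, so $dim_s(G)\,dim_s(H)\le 2$ forces $\{dim_s(G),dim_s(H)\}\subseteq\{1,2\}$ with at least one of them equal to $1$; say $dim_s(H)=1$, so by Lemma~\ref{lem_path} $H$ is a path, and $dim_s(G)\in\{1,2\}$.

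It remains to exclude $dim_s(G)=2$. Here I would use the lower bound $dim_s(G\square H)\ge \mu(H_{SR})\,dim_s(G\square K_2)$ from the matching-based theorem, together with the observation that $dim_s(G\square K_2)\ge dim_s(G\square P_2)$... more directly: since $H$ is a path $P_{n_2}$ with $n_2\ge 2$, we have $H_{SR}\cong K_2$, so $\mu(H_{SR})=1$ and the bound reads $dim_s(G\square H)\ge dim_s(G\square K_2)$, but since $K_2=P_2$ is itself a path this is not yet a contradiction. The cleaner route: invoke $dim_s(G\square H)\ge dim_s(G)\,dim_s(H)$ again — if $dim_s(G)=2$ and $dim_s(H)=1$ this gives only $dim_s(G\square H)\ge 2$, so equality is not yet excluded. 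The genuine obstacle is therefore to show that when $G$ is not a path, the product with a path has strong dimension strictly bigger than $2$. For this I would argue at the level of the strong resolving graph: if $G$ is not a path then $dim_s(G)\ge 2$, so $\alpha(G_{SR})\ge 2$, meaning $G_{SR}$ has at least two independent edges or a vertex of degree $\ge 2$ in a way forcing $\mu(G_{SR})\ge 2$; combined with $H_{SR}\cong K_2$ and Lemma~\ref{lemmaVertexcoverGxK2} applied as $\alpha(G_{SR}\times H_{SR})=\alpha(G_{SR}\times K_2)=\mu(G_{SR}\times K_2)\ge 2\mu(G_{SR})\ge 4>2$, a contradiction.

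Thus the main obstacle is the precise step $dim_s(G)\ge 2\Rightarrow \mu(G_{SR})\ge 2$: if $\alpha(G_{SR})\ge 2$ then $G_{SR}$ cannot be a star (a star has vertex cover number $1$) nor a single edge, so $G_{SR}$ — having no isolated vertices by construction — must contain two independent edges, giving $\mu(G_{SR})\ge 2$. I would state this as a small auxiliary observation and then the chain $dim_s(G\square H)=\alpha(G_{SR}\times K_2)=\mu(G_{SR}\times K_2)\ge 2\mu(G_{SR})\ge 4$ closes the argument, forcing $dim_s(G)=1$ and hence $G$ a path as well.
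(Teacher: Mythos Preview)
Your argument has a genuine gap at the step ``$\alpha(G_{SR})\ge 2$ and $G_{SR}$ has no isolated vertices $\Rightarrow$ $\mu(G_{SR})\ge 2$.'' A graph with no isolated vertices and matching number~$1$ need not be a star: it can also be a triangle. Concretely, if $G=K_3$ (or any tree with exactly three leaves, e.g.\ $K_{1,3}$), then $G_{SR}\cong K_3$, so $\alpha(G_{SR})=2$ but $\mu(G_{SR})=1$. In that case your chain gives only $\alpha(G_{SR}\times K_2)\ge 2\mu(G_{SR})=2$, which does not contradict $dim_s(G\square H)=2$.

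The gap is easy to patch: graphs with no isolated vertices and $\mu=1$ are precisely stars and $K_3$. You have already excluded stars (they have $\alpha=1$), so the only remaining case is $G_{SR}\cong K_3$, and then $G_{SR}\times K_2\cong C_6$, giving $\alpha(G_{SR}\times K_2)=3>2$ directly. With this extra line your argument is complete.

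For comparison, the paper's proof takes a different route for the ``only if'' direction: it works directly with a strong metric basis $\{(a,x),(b,y)\}$, shows by an explicit distance computation that $a\ne b$ and $x\ne y$ cannot both hold, and then uses Lemma~\ref{LemmaIntervals} on intervals to project down and conclude one factor is a path; only then does it pass to the strong resolving graph to handle the second factor. Your approach stays entirely at the level of $G_{SR}$ and $H_{SR}$ via the product bounds, which is more uniform and avoids the interval lemma, at the cost of needing the small matching/cover observation above.
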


\begin{proof}
If $G$ and $H$ are paths, then, by Theorem \ref{eq_oellerman_direct},
\begin{align*}
dim_s(G\square H)&=\alpha (K_2\times K_2)=2.
\end{align*}
On the other hand, let $S = \{(a,x),(b,y)\}$ be a strong metric basis of $G\square H$. If $a\neq b$ and $x\neq y$. Let $c$ be a neighbor of $b$ on a $a-b$ path (possibly $a = c$). Let $z$ be a neighbor of $y$ on a $x-y$ path (notice that could be $x = z$). So,  we have $d_{G\square H}((b,z),(a,x)) = d_G(a,b)+d_H(z,x) = d_G(a,c)+1+d_H(x,y)-1 = d_G(a,c)+d_H(x,y) = d_{G\square H}((c,y),(a,x))$. Thus, $(b,z)\notin I_{G\square H}[(c,y),(a,x)]$ and $(c,y)\notin I_{G\square H}[(b,z),(a,x)]$. Moreover, $d_{G\square H}((b,z),(b,y)) = d_H(z,y) = 1 = d_G(a,c) = d_{G\square H}((c,y),(b,y))$. Thus, $(b,z)\notin I_{G\square H}[(c,y),(b,y)]$ and $(c,y)\notin I_{G\square H}[(b,z),(b,y)]$. Therefore, $S = \{(a,x),(b,y)\}$ does not strongly resolve $(b,z)$ and $(c,y)$.

If $a = b$, then the projection of $S$ onto $G$ is a single vertex. By Lemma \ref{LemmaIntervals}, the projection of $S$ onto $G$ strongly resolves $G$ and thus, by Lemma \ref{lem_path}, $G$ is a path. Similarly, if $x = y$, then $H$ is a path.  Therefore, $G$ or $H$ is a path. We assume, without loss of generality, that $G$ is a path. By Theorem \ref{eq_oellerman_direct} it follows that $2=dim_s(G\square H)=\alpha(K_2\times H_{SR})$. Thus, either $H_{SR}$ is isomorphic to $K_2$ or $\alpha(H_{SR})=1$. Thus $dim_s(H)=1$. Therefore, by Lemma \ref{lem_path}, $H$ is a path.
\end{proof}

\section{Strong metric dimension of direct product graphs}

\begin{lemma}{\em \cite{direct-cart-isom}}
Let $G$ and $H$ be two connected graphs. Then $G\Box H\cong G\times H$ if and only if $G\cong H\cong C_{2k+1}$ for some positive integer $k$.
\end{lemma}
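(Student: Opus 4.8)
\emph{The ``if'' direction.} I would exhibit an explicit isomorphism $C_{2k+1}\times C_{2k+1}\to C_{2k+1}\square C_{2k+1}$. Identify $V(C_{2k+1})$ with $\mathbb{Z}_{2k+1}$ so that $i\sim j$ iff $i-j\equiv\pm1\pmod{2k+1}$, and identify the common vertex set of both products with $\mathbb{Z}_{2k+1}^2$. Since $2k+1$ is odd, $2$ is invertible modulo $2k+1$, indeed $2(k+1)\equiv1$; so the map $\phi(a,b)=\bigl((k+1)(a+b),(k+1)(a-b)\bigr)$ has determinant $-2(k+1)^2$, a unit modulo $2k+1$, hence is a bijection of $\mathbb{Z}_{2k+1}^2$. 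Now $(a,b)\sim(c,d)$ in $C_{2k+1}\times C_{2k+1}$ precisely when $c=a+\varepsilon$, $d=b+\delta$ with $\varepsilon,\delta\in\{1,-1\}$, and then $\phi(c,d)-\phi(a,b)=(k+1)(\varepsilon+\delta,\varepsilon-\delta)$, which runs over exactly $\{(\pm1,0),(0,\pm1)\}$ as $(\varepsilon,\delta)$ runs over $\{1,-1\}^2$ (using $2(k+1)\equiv1$). Hence $\phi$ maps the four neighbours of each vertex in the direct product bijectively onto the four neighbours of its image in the Cartesian product, so $\phi$ is a graph isomorphism.

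\emph{The ``only if'' direction: reduction to cycles.} Suppose $G\square H\cong G\times H$ with $G,H$ connected. A vertex $(u,v)$ has degree $\deg_G(u)+\deg_H(v)$ in $G\square H$ and degree $\deg_G(u)\deg_H(v)$ in $G\times H$. Equating minimum and maximum degrees of the two isomorphic graphs gives $\delta(G)+\delta(H)=\delta(G)\delta(H)$ and $\Delta(G)+\Delta(H)=\Delta(G)\Delta(H)$; the only positive-integer solution of $x+y=xy$ is $x=y=2$ (rewrite as $(x-1)(y-1)=1$). Hence $G$ and $H$ are connected $2$-regular graphs, i.e. $G\cong C_{n_1}$ and $H\cong C_{n_2}$ with $n_1,n_2\ge3$.

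\emph{The ``only if'' direction: the two cycles have equal length.} Since $C_{n_1}\times C_{n_2}\cong C_{n_1}\square C_{n_2}$ is connected, Weichsel's theorem gives that at least one factor is non-bipartite; say $n_2$ is odd. Then a fibre $\{u\}\square C_{n_2}$ is an odd cycle, so $C_{n_1}\square C_{n_2}$ is non-bipartite, hence so is $C_{n_1}\times C_{n_2}$; but a direct product with a bipartite factor is bipartite, so $C_{n_1}$ is also non-bipartite, i.e. $n_1$ is odd. Next I would compare odd girths. A fibre $C_{n_i}\square\{x\}$ is an odd cycle, so the odd girth of $C_{n_1}\square C_{n_2}$ is at most $\min\{n_1,n_2\}$; conversely, an odd closed walk in $C_{n_1}\square C_{n_2}$ uses an odd number of edges in one of the two coordinate directions, and those edges form an odd closed walk of the corresponding factor $C_{n_i}$, whose length is therefore at least $n_i\ge\min\{n_1,n_2\}$; thus the odd girth of $C_{n_1}\square C_{n_2}$ equals $\min\{n_1,n_2\}$. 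On the other hand, a closed walk of length $\ell$ in $C_{n_1}\times C_{n_2}$ projects to closed walks of the same length $\ell$ in both factors, and conversely closed walks of a common length in $C_{n_1}$ and $C_{n_2}$ combine coordinatewise into a closed walk of that length in the product; since the lengths of odd closed walks of an odd cycle $C_n$ are exactly $n,n+2,n+4,\dots$, the odd girth of $C_{n_1}\times C_{n_2}$ equals $\max\{n_1,n_2\}$. An isomorphism forces $\min\{n_1,n_2\}=\max\{n_1,n_2\}$, hence $n_1=n_2$. Writing $n_1=n_2=2k+1$ (odd, $\ge3$) yields $G\cong H\cong C_{2k+1}$ with $k\ge1$.

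\emph{Anticipated main obstacle.} The delicate point is the odd-girth computation for the direct product, specifically that the lower bound $\max\{n_1,n_2\}$ is attained: one must check that two closed walks of equal odd length in $C_{n_1}$ and $C_{n_2}$ really do combine into a \emph{cycle} (no repeated vertex) of that length in $C_{n_1}\times C_{n_2}$, which works because if one of the two walks is the single full traversal of the longer cycle then its coordinates there are pairwise distinct. If one wishes to avoid this, the equality $n_1=n_2$ follows just as well by comparing least adjacency eigenvalues: for odd $n_1,n_2$ the least eigenvalue of $C_{n_1}\square C_{n_2}$ is $-2\cos(\pi/n_1)-2\cos(\pi/n_2)$, that of $C_{n_1}\times C_{n_2}$ is $-4\cos\bigl(\pi/\max\{n_1,n_2\}\bigr)$, and these coincide only when $n_1=n_2$.
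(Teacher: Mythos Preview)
The paper does not prove this lemma; it is quoted from Miller~\cite{direct-cart-isom} without proof, so there is nothing in the paper to compare your argument against. Your proof is correct and self-contained. The explicit isomorphism $\phi(a,b)=\bigl((k{+}1)(a{+}b),\,(k{+}1)(a{-}b)\bigr)$ handles the ``if'' direction cleanly, and for the ``only if'' direction the chain---the degree identity $(x-1)(y-1)=1$ forcing $2$-regularity, Weichsel's theorem forcing both cycle lengths odd, and the odd-girth comparison $\min\{n_1,n_2\}=\max\{n_1,n_2\}$ forcing $n_1=n_2$---is sound and elementary; the eigenvalue alternative you sketch is also valid. The only quibble is the degenerate pair $G=H=K_1$, which satisfies $G\Box H\cong G\times H$ yet is not an odd cycle; this is an artefact of the lemma's phrasing (implicitly excluded by your ``positive-integer'' hypothesis on degrees), not a flaw in your reasoning.
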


The above lemma and Corollary \ref{corollaryTheoremtransitive} (c) give the following result.

\begin{proposition}
For any positive integer $k$, $dim_s(C_{2k+1}\times C_{2k+1})=(2k+1)(k+1)$.
\end{proposition}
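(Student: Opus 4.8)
The plan is to simply combine the two results that immediately precede this proposition. By the lemma of \cite{direct-cart-isom}, for any positive integer $k$ we have the graph isomorphism $C_{2k+1}\times C_{2k+1}\cong C_{2k+1}\Box C_{2k+1}$, since both factors are odd cycles of the same length. Since the strong metric dimension is a graph invariant (it depends only on the isomorphism class), this gives $dim_s(C_{2k+1}\times C_{2k+1})=dim_s(C_{2k+1}\Box C_{2k+1})$.

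Next I would invoke Corollary \ref{corollaryTheoremtransitive} (c), which states that $dim_s(C_{2n+1}\Box C_{2r+1})=\min\{(2n+1)(r+1),(2r+1)(n+1)\}$. Specializing to $n=r=k$, the two quantities inside the minimum coincide, both equal to $(2k+1)(k+1)$, so $dim_s(C_{2k+1}\Box C_{2k+1})=(2k+1)(k+1)$. Chaining the two equalities yields $dim_s(C_{2k+1}\times C_{2k+1})=(2k+1)(k+1)$, as claimed.

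There is essentially no obstacle here: the proof is a two-line deduction once the isomorphism lemma and the Cartesian-product corollary are in hand. If one wanted to be fully self-contained one could instead argue directly via Theorem \ref{cartesian_directed} and the fact that $(C_{2k+1})_{SR}\cong C_{2k+1}$, so that $(C_{2k+1}\Box C_{2k+1})_{SR}\cong C_{2k+1}\times C_{2k+1}$, and then compute $\alpha$ of this graph using Gallai's theorem together with Lemma \ref{lemma-vertex-transitive} (odd cycles are vertex-transitive, with $\beta(C_{2k+1})=k$), obtaining $\alpha=(2k+1)^2-\max\{(2k+1)k,(2k+1)k\}=(2k+1)(k+1)$; but since Corollary \ref{corollaryTheoremtransitive} (c) already packages exactly this computation, the short route is preferable.
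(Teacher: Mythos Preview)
Your proof is correct and follows exactly the same approach as the paper: invoke the isomorphism lemma from \cite{direct-cart-isom} to reduce $C_{2k+1}\times C_{2k+1}$ to $C_{2k+1}\Box C_{2k+1}$, then apply Corollary~\ref{corollaryTheoremtransitive}(c) with $n=r=k$. The alternative self-contained route you sketch is also valid but, as you note, redundant given the packaged corollary.
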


\begin{lemma}\label{k_r-times-k_t}
For any positive integers $r,t\ge 3$,
 $(K_r\times K_t)_{SR}\cong K_r\Box K_t$.
\end{lemma}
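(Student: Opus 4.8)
The plan is to first pin down the metric structure of $K_r\times K_t$ and then read off its strong resolving graph directly from the definition of mutually maximally distant vertices, exactly as in the proof of Theorem \ref{cartesian_directed}.

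\textbf{Step 1 (distances).} I would show that for $r,t\ge 3$ the graph $K_r\times K_t$ is connected of diameter $2$, with $d((a,b),(c,d))=1$ when $a\ne c$ and $b\ne d$, and $d((a,b),(c,d))=2$ when exactly one of $a=c$, $b=d$ holds (these three cases, together with equality, exhaust all pairs). The distance-$1$ part is just the definition of the direct product; for the distance-$2$ part one exhibits a common neighbour of two distinct vertices that agree in exactly one coordinate, and this is possible precisely because $r\ge 3$ and $t\ge 3$: e.g. for $(a,b)$ and $(a,d)$ with $b\ne d$ one picks $c'\ne a$ and $d'\notin\{b,d\}$, and symmetrically for $(a,b)$ and $(c,b)$ with $a\ne c$.

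\textbf{Step 2 (adjacent vertices are never mutually maximally distant).} Let $(a,b)$ and $(c,d)$ be adjacent, so $a\ne c$ and $b\ne d$. Since $t\ge 3$ there is $b'\notin\{b,d\}$; then $(c,b')$ lies in the open neighbourhood of $(a,b)$ because $c\ne a$ and $b'\ne b$, while by Step 1, $d((c,d),(c,b'))=2>1=d((c,d),(a,b))$. Hence $(a,b)$ is not maximally distant from $(c,d)$, so no pair of adjacent vertices of $K_r\times K_t$ is mutually maximally distant; equivalently, no edge of $(K_r\times K_t)_{SR}$ joins two vertices at distance $1$.

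\textbf{Step 3 (conclusion).} Because the diameter is $2$, any vertex at distance $2$ from a given vertex is automatically maximally distant from it, so combining with Step 2 we get that two vertices of $K_r\times K_t$ are mutually maximally distant if and only if they are at distance exactly $2$, i.e. if and only if they agree in exactly one coordinate. Every vertex $(a,b)$ has such a partner (for instance $(a,b')$ with $b'\ne b$), so $\partial(K_r\times K_t)=V(K_r)\times V(K_t)$, and the adjacency relation just described is precisely the adjacency relation of $K_r\Box K_t$. Therefore the identity map on $V(K_r)\times V(K_t)$ is an isomorphism $(K_r\times K_t)_{SR}\cong K_r\Box K_t$. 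The only delicate point is Step 2, which is also where the hypothesis $r,t\ge 3$ is genuinely used: it is exactly what forces adjacent vertices to fail to be maximally distant and what keeps every ``agree-in-one-coordinate'' pair at distance $2$ rather than further; with a $K_2$ factor the metric structure, and hence $(K_r\times K_t)_{SR}$, changes.
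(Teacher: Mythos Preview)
Your proof is correct and follows essentially the same route as the paper's: compute the distances in $K_r\times K_t$ (diameter $2$, with distance $2$ exactly when the vertices agree in one coordinate), and conclude that mutually maximally distant pairs are precisely those at distance $2$, which is the adjacency relation of $K_r\Box K_t$. Your Step~2 makes explicit the one point the paper leaves implicit---namely, why adjacent vertices fail to be mutually maximally distant---and correctly identifies this as the place where $r,t\ge 3$ is genuinely needed.
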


\begin{proof}
Let $V_1$ and $V_2$ be the vertex sets of $K_r$ and $K_t$, respectively. Let $(u_1,v_1)$ and $(u_2,v_2)$ be two distinct vertices of $K_r\times K_t$. If $u_1=u_2$ or $v_1=v_2$, then $d_{K_r\times K_t}((u_1,v_1),(u_2,v_2))=2$. On the other hand, if $u_1\ne u_2$ and $v_1\ne v_2$, then $d_{K_r\times K_t}((u_1,v_1),(u_2,v_2))=1$. Thus, any two distinct vertices $(u_1,v_1)$ and $(u_2,v_2)$ are mutually maximally distant in $K_r\times K_t$ if and only if either $u_1=u_2$ or $v_1=v_2$. So, every vertex $(x,y)$ is adjacent in $(K_r\times K_t)_{SR}$ to all the vertices of the sets $\{(x,v_i)\,:\,v_i\in V_2-\{y\}\}$ and $\{(u_i,y)\,:\,u_i\in V_1-\{x\}\}$ and thus, $(K_r\times K_t)_{SR}$ is isomorphic to the Cartesian product graph $K_r\Box K_t$.
\end{proof}

\begin{lemma}{\rm (\cite{Vizing1963}, Vizing, 1963)} \label{BizingIndep}For any graphs $G$ and $H$ of order $r$ and $t$, respectively,
$$\beta(G)\beta(H)+\min\{r-\beta(G),t-\beta(H)\}\le \beta(G\Box H)\le \min\{t\beta(G),r\beta(H).\}$$
\end{lemma}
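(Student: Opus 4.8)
The plan is to prove the two inequalities separately, writing $|V(G)|=r$ and $|V(H)|=t$. For the upper bound I would use the standard ``layer'' decomposition of the Cartesian product: for each vertex $v\in V(H)$ the set $V(G)\times\{v\}$ induces a subgraph of $G\Box H$ isomorphic to $G$. Hence if $S$ is any independent set of $G\Box H$, then $S\cap(V(G)\times\{v\})$ is independent in that copy of $G$, so $|S\cap(V(G)\times\{v\})|\le\beta(G)$. Summing over the $t$ vertices of $H$ yields $|S|\le t\,\beta(G)$, and therefore $\beta(G\Box H)\le t\,\beta(G)$; the symmetric argument using the $H$-layers $\{u\}\times V(H)$ gives $\beta(G\Box H)\le r\,\beta(H)$, and together these give the upper bound.

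For the lower bound I would exhibit an independent set of the required size. Assume without loss of generality that $r-\beta(G)\le t-\beta(H)$, so that the minimum in the statement equals $r-\beta(G)$, which by Gallai's theorem (Theorem~\ref{th_gallai}) equals the vertex cover number $\alpha(G)$. Fix a $\beta(G)$-set $A$ of $G$ and put $C=V(G)\setminus A$; by Gallai's theorem $C$ is a minimum vertex cover of $G$, so $|C|=\alpha(G)$, and we write $C=\{c_1,\dots,c_{\alpha(G)}\}$. Fix a $\beta(H)$-set $B$ of $H$ and put $D=V(H)\setminus B$; then $|D|=t-\beta(H)\ge r-\beta(G)=\alpha(G)$, so we may choose pairwise distinct vertices $d_1,\dots,d_{\alpha(G)}\in D$. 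Define
$$S=(A\times B)\cup\{(c_i,d_i):1\le i\le\alpha(G)\}.$$

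Next I would check that $S$ is independent by analysing the three types of pairs. Two vertices of $A\times B$ are non-adjacent in $G\Box H$ because adjacency would force an edge of $G$ inside $A$ or an edge of $H$ inside $B$. Two diagonal vertices $(c_i,d_i),(c_j,d_j)$ with $i\ne j$ disagree in both coordinates, hence are non-adjacent. Finally, a diagonal vertex $(c_i,d_i)$ and a vertex $(a,b)\in A\times B$ disagree in the first coordinate (since $c_i\in C$ and $a\in A$ lie in disjoint sets) and in the second ($d_i\in D$, $b\in B$), so they too are non-adjacent. Since $A\cap C=\emptyset$, the two parts of $S$ are disjoint, so $|S|=\beta(G)\beta(H)+\alpha(G)=\beta(G)\beta(H)+\min\{r-\beta(G),t-\beta(H)\}$, which gives the lower bound.

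There is no deep obstacle here: the upper bound is a one-line counting argument, and the lower bound reduces to guessing the right configuration --- pairing a minimum vertex cover of the ``smaller'' factor with arbitrary distinct vertices outside the maximum independent set of the ``larger'' factor, placed on a diagonal so that no coordinate collisions arise. The only points requiring care are the symmetry reduction at the start of the lower bound and a quick sanity check of the degenerate cases (for instance $\alpha(G)=0$, where $G$ is edgeless and $G\Box H$ is a disjoint union of $r$ copies of $H$, so that $S$ collapses to $V(G)\times B$).
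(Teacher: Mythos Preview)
Your proof is correct. The paper, however, does not prove this lemma at all: it is quoted as a classical result of Vizing \cite{Vizing1963} and used as a black box in the proof of Proposition~\ref{propoCartDirecComplete}. So there is no paper proof to compare against; you have simply supplied what the paper omits.

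For the record, both halves of your argument are the standard ones. The upper bound is the usual layer-counting. For the lower bound, your construction is exactly Vizing's: take the product $A\times B$ of maximum independent sets and enlarge it by a ``diagonal'' matching between $V(G)\setminus A$ and a subset of $V(H)\setminus B$ of the same size. Your three-case verification that no two vertices of $S$ share a coordinate in the relevant way is complete, and the degenerate case $\alpha(G)=0$ that you flag is handled automatically by the construction.
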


\begin{proposition}\label{propoCartDirecComplete}
For any positive integers $r,t\ge 3$,
 $dim_s(K_r\times K_t)=\max\{r(t-1),t(r-1)\}$
\end{proposition}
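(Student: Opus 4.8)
The plan is to funnel everything through Theorem~\ref{lem_oellerman} and the structural identification of $(K_r\times K_t)_{SR}$, and then to pin down the relevant independence number by squeezing it between Vizing's two bounds.

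First I would apply Lemma~\ref{k_r-times-k_t}, which (for $r,t\ge 3$) gives $(K_r\times K_t)_{SR}\cong K_r\Box K_t$. Feeding this into Theorem~\ref{lem_oellerman} immediately yields
$$dim_s(K_r\times K_t)=\alpha\bigl((K_r\times K_t)_{SR}\bigr)=\alpha(K_r\Box K_t),$$
so the whole problem reduces to computing the vertex cover number of the rook's graph $K_r\Box K_t$.

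Next I would determine $\beta(K_r\Box K_t)$ using Lemma~\ref{BizingIndep} with $G=K_r$ and $H=K_t$. Since $\beta(K_r)=\beta(K_t)=1$, the lower bound becomes $\beta(K_r)\beta(K_t)+\min\{r-\beta(K_r),\,t-\beta(K_t)\}=1+\min\{r-1,t-1\}=\min\{r,t\}$, while the upper bound becomes $\min\{t\beta(K_r),\,r\beta(K_t)\}=\min\{t,r\}$. The two bounds coincide, so $\beta(K_r\Box K_t)=\min\{r,t\}$. (Alternatively, one may observe directly that a maximum independent set of the $r\times t$ rook's graph is a set of mutually non-attacking rooks, of size $\min\{r,t\}$, but quoting Lemma~\ref{BizingIndep} is cleaner and needs no ad hoc argument.)

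Finally, Gallai's theorem (Theorem~\ref{th_gallai}) applied to $K_r\Box K_t$, a graph of order $rt$, gives
$$\alpha(K_r\Box K_t)=rt-\beta(K_r\Box K_t)=rt-\min\{r,t\}=\max\{r(t-1),\,t(r-1)\},$$
and combining with the first displayed equation finishes the proof. I do not anticipate a genuine obstacle here: the only point that requires a moment's care is checking that Vizing's lower and upper bounds agree, which happens precisely because both factors are complete and hence have independence number $1$; the hypothesis $r,t\ge 3$ is needed only to license Lemma~\ref{k_r-times-k_t}.
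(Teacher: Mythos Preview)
Your proof is correct and follows essentially the same route as the paper: identify $(K_r\times K_t)_{SR}\cong K_r\Box K_t$ via Lemma~\ref{k_r-times-k_t}, invoke Theorem~\ref{lem_oellerman}, compute $\beta(K_r\Box K_t)=\min\{r,t\}$ from Vizing's bounds (Lemma~\ref{BizingIndep}), and finish with Gallai's theorem. The only difference is cosmetic ordering; the paper applies Gallai before quoting Vizing, whereas you apply it after, but the ingredients and logic are identical.
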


 \begin{proof}
 By Theorem \ref{lem_oellerman}, Lemma \ref{k_r-times-k_t}, and Gallai's theorem
 $dim_s(K_r\times K_t)=rt-\beta(K_r\Box K_t)$. By Lemma \ref{BizingIndep}, $\beta(K_r\Box K_t)=\min\{r,t\}$. Thus $dim_s(K_r\times K_t)=rt-\min\{r,t\}=\max\{r(t-1),t(r-1)\}$.
\end{proof}

We now introduce a well-known class of graphs that will be used to  prove our next result. Let $\mathbb{Z}_n$ be the additive group of integers modulo $n$ and let $M\subset \mathbb{Z}_n$, such that, $i\in M$ if and only if $-i\in M$. We can construct a graph $G=(V,E)$ as follows: the vertices of $V$ are the elements of $\mathbb{Z}_n$ and $(i,j)$ is an edge in $E$ if and only if $j-i\in M$. This graph is a \emph{circulant of order $n$} and is denoted by $CR(n, M)$. With this notation, a cycle is  the same as $CR(n, \{-1,1\})$ and the complete graph is $CR(n,\mathbb{Z}_n)$. In order to simplify the notation
we will use $CR(n,t)$, $0<t\leq\frac{n}{2}$, instead of $CR(n,\{-t,-t+1,...,-1,1,2,...,t\})$. This is also the $t^{th}$ power of $C_n$.

\begin{lemma}\label{circulant-indep}
For any circulant graph $CR(n,2)$, $\beta(CR(n,2))=\left\lfloor\frac{n}{3}\right\rfloor$.
\end{lemma}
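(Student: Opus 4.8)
The plan is to prove the equality by squeezing $\beta(CR(n,2))$ between two matching bounds, using the elementary description of $CR(n,2)$ as the square of $C_n$: two vertices $i,j\in\mathbb{Z}_n$ are adjacent exactly when $j-i\in\{\pm1,\pm2\}\pmod n$, so a set $S\subseteq\mathbb{Z}_n$ is independent precisely when every pair of elements of $S$ that are consecutive in cyclic order are at circular distance at least $3$.

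For the upper bound $\beta(CR(n,2))\le\lfloor n/3\rfloor$, I would take an arbitrary independent set $S$ with $|S|=k$; the cases $k\le 1$ are immediate since $\lfloor n/3\rfloor\ge 1$, so assume $k\ge 2$. Writing the elements of $S$ in cyclic order as $a_0<a_1<\dots<a_{k-1}$ in $\{0,\dots,n-1\}$, define the cyclic gaps $g_i=a_{i+1}-a_i$ for $i<k-1$ and $g_{k-1}=n-a_{k-1}+a_0$, so that $\sum_{i=0}^{k-1}g_i=n$ and each $g_i\ge 1$. The crucial observation is that if some $g_i\in\{1,2\}$, then the corresponding pair of (cyclically) consecutive vertices differs by $\pm1$ or $\pm2$ modulo $n$, hence is an edge of $CR(n,2)$, contradicting independence. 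Therefore $g_i\ge 3$ for every $i$, whence $n=\sum_i g_i\ge 3k$ and $k\le\lfloor n/3\rfloor$.

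For the lower bound I would exhibit an explicit independent set of size $q:=\lfloor n/3\rfloor$. If $q\le 1$ the singleton $\{0\}$ suffices (and $q\ge 1$ whenever $n\ge 3$); if $q\ge 2$, equivalently $n\ge 6$, take $S=\{0,3,6,\dots,3(q-1)\}$, which has $q$ elements. For $0\le i<j\le q-1$ one has $3\le 3(j-i)\le 3(q-1)=3q-3\le n-3$, using $3q\le n$; consequently $3(j-i)\notin\{1,2,n-1,n-2\}$, so $3i$ and $3j$ are non-adjacent and $S$ is independent. Combining the two bounds yields $\beta(CR(n,2))=\lfloor n/3\rfloor$.

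The argument is essentially routine; the only points requiring a little care are the correct handling of the wrap-around gap $g_{k-1}$ (respectively the last block of $S$) and a quick check of the small-order cases $n\le 5$, where $CR(n,2)$ may actually be complete and the construction degenerates to a single vertex. I do not anticipate a genuine obstacle beyond this bookkeeping.
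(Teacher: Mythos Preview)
Your proposal is correct and follows essentially the same approach as the paper: the explicit independent set $\{0,3,\dots,3(q-1)\}$ coincides with the paper's case-by-case construction, and your cyclic-gap argument for the upper bound is a cleaner version of the paper's pigeonhole-by-contradiction step. The only difference is presentation; your gap-summation makes the upper bound fully explicit, whereas the paper simply asserts that more than $\lfloor n/3\rfloor$ vertices force two within cyclic distance $2$.
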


\begin{proof}
Let $V=\{u_0,u_1,...,u_{n-1}\}$ be the set of vertices of $CR(n,2)$, where two vertices $u_i,u_j$ are adjacent if and only if $i-j\in \{-2,-1,1,2\}$. Notice that every vertex $u_i$ is adjacent to the vertices $u_{i-2},u_{i-1},u_{i+1},u_{i+2}$, where the operations with the subindex $i$ are expressed modulo $n$. Let $S$ be the set of vertices of $CR(n,2)$ satisfying the following.
\begin{itemize}
\item If $n\equiv 0$ mod $3$, then $S=\{u_0,u_3,u_6,...,u_{n-6},u_{n-3}\}$.
\item If $n\equiv 1$ mod $3$, then $S=\{u_0,u_3,u_6,...,u_{n-7},u_{n-4}\}$.
\item If $n\equiv 2$ mod $3$, then $S=\{u_0,u_3,u_6,...,u_{n-8},u_{n-5}\}$.
\end{itemize}
Notice that $S$ is an independent set. Thus, $\beta(CR(n,2))\ge |S|=\left\lfloor\frac{n}{3}\right\rfloor$. Now, let us suppose that $\beta(CR(n,2))>\left\lfloor\frac{n}{3}\right\rfloor$ and let $S'$ be an independent set of maximum cardinality in $CR(n,2)$. Hence there exist two vertices $u_i,u_j\in S'$ such that either $i=j+1$, $i=j-1$, $i=j+2$ or $i=j-2$, where the operations with the subindexes $i,j$ are expressed modulo $n$. Thus, $i-j\in \{-2,-1,1,2\}$ and, hence, $u_i$ and $u_j$ are adjacent, which is a contradiction. Therefore, $\beta(CR(n,2))=\left\lfloor\frac{n}{3}\right\rfloor$ and the proof is complete.
\end{proof}

From now on we will use the notation $u\sim v$ if $u$ and $v$ are adjacent vertices. For a  vertex
$v$ of a graph $G$, $N_G(v)$ will denote the set of neighbors of $v$ in $G$, i.e.,  $N_G(v)=\{u\in V:\; u\sim v\}$.

\begin{theorem}
For any positive integers $r\ge 4$ and $t\ge 3$,
$$dim_s(C_r\times K_t)=\left\{\begin{array}{ll}
                                t(r-1), & \mbox{ if $r\in \{4,5\}$,} \\
                                & \\
                                \frac{tr}{2}, & \mbox{ if $r$ is even and $r\ge 6$,} \\
                                & \\
                                t(r-\left\lfloor\frac{r}{3}\right\rfloor), & \mbox{ otherwise.}
                              \end{array}
\right.$$
\end{theorem}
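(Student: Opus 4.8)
The plan is to follow the route used for $K_r\times K_t$ in Proposition~\ref{propoCartDirecComplete}: first pin down the strong resolving graph $(C_r\times K_t)_{SR}$, and then invoke Theorem~\ref{lem_oellerman} together with Gallai's theorem (Theorem~\ref{th_gallai}). Since $C_r$ and $K_t$ are vertex-transitive, $C_r\times K_t$ is vertex-transitive, and in a vertex-transitive graph $\partial(G)=V(G)$ (any diametral pair is mutually maximally distant, and one translates). Hence $V\big((C_r\times K_t)_{SR}\big)$ has $rt$ elements and
\[
dim_s(C_r\times K_t)=\alpha\big((C_r\times K_t)_{SR}\big)=rt-\beta\big((C_r\times K_t)_{SR}\big),
\]
so everything reduces to computing this independence number in each of the three regimes.

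The key preliminary is a distance formula. I would use that $d_{G\times H}((g,h),(g',h'))$ equals the least $k$ for which $G$ has a $g$--$g'$ walk of length $k$ and $H$ has an $h$--$h'$ walk of length $k$, plus the elementary walk-length descriptions in $K_t$ (for $t\ge3$: every length $\ge1$ between distinct vertices, every length in $\{0\}\cup\{2,3,4,\dots\}$ from a vertex to itself) and in $C_r$ (governed by the parity of $r$). Writing $a:=d_{C_r}(i,j)$, this yields
\[
d_{C_r\times K_t}\big((i,u),(j,v)\big)=
\begin{cases}
a, & a\ge 2,\\
1, & a=1,\ u\ne v,\\
3, & a=1,\ u=v,\\
2, & a=0,\ u\ne v.
\end{cases}
\]
A short neighbourhood inspection (a neighbour of $(i,u)$ is $(i\pm1,u')$ with $u'\ne u$) then identifies the strong resolving graph: for $r\ge6$, $(i,u)$ and $(j,v)$ are adjacent in $(C_r\times K_t)_{SR}$ iff $d_{C_r}(i,j)=\lfloor r/2\rfloor$ (for all $u,v$) or $d_{C_r}(i,j)=1$ and $u=v$; for $r\in\{4,5\}$ the mutually maximally distant pairs turn out to be exactly the pairs $(i,u),(j,u)$ with $i\ne j$, so $(C_r\times K_t)_{SR}\cong t\,K_r$. (The ``$a=1,\ u=v$'' edges, present only because $t\ge3$ forces $d=3$ there, are the easiest to miss.)

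With this description in hand I would treat the three cases separately. For $r\in\{4,5\}$, $\alpha(t\,K_r)=t(r-1)$. For $r$ even with $r\ge6$, the strong resolving graph induced on one fibre $\{i\}\times V(K_t)$ is a copy of $CR(r,\{1,r/2\})$ (the cycle plus its $r/2$ diameters) and antipodal fibres are joined completely; an independent set may be taken of the form $I^{\ast}\times V(K_t)$ with $I^{\ast}$ a maximum independent set of $CR(r,\{1,r/2\})$, and since each fibre meets an independent set in an independent set of $CR(r,\{1,r/2\})$, one gets $\beta\big((C_r\times K_t)_{SR}\big)=t\,\beta\big(CR(r,\{1,r/2\})\big)$; when $r\equiv2\pmod4$ this circulant is bipartite and $3$-regular, hence has a perfect matching and independence number $r/2$, which gives $dim_s=rt-tr/2=tr/2$ (the residue class $r\equiv0\pmod4$ needs separate attention, since there $CR(r,\{1,r/2\})$ is no longer bipartite). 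For $r$ odd with $r\ge7$, note $r=2\cdot\frac{r-1}{2}+1$, so $\gcd(\frac{r-1}{2},r)=1$ and $i\mapsto-2i$ is a permutation of $\mathbb Z_r$ that relabels the ``$d_{C_r}=\frac{r-1}{2}$'' adjacencies as ordinary cycle edges and the within-fibre cycle edges as ``distance $2$'' edges; thus each fibre now carries a copy of $CR(r,2)$ and consecutive fibres are joined completely. Taking $I^{\ast}$ to be a $\beta(CR(r,2))$-set, which by Lemma~\ref{circulant-indep} has size $\lfloor r/3\rfloor$, the set $I^{\ast}\times V(K_t)$ is independent, so $\beta\ge t\lfloor r/3\rfloor$; conversely, for an independent set $S$ let $I$ be the set of positions $S$ meets (necessarily independent in $C_r$), break $I$ into maximal runs of elements at consecutive distance $2$ with sizes $m_1,\dots,m_p$, observe that disjointness of the ``distance-$2$, $u=v$'' slices forces run $c$ to contribute at most $\lceil m_c/2\rceil\cdot|V(K_t)|$ vertices, that run $c$ together with its trailing gap of length $\ge3$ occupies at least $2m_c+1$ positions so $\sum_c(2m_c+1)\le r$, and that $\lceil m/2\rceil\le(2m+1)/3$; hence $\sum_c\lceil m_c/2\rceil$ is an integer at most $r/3$, so at most $\lfloor r/3\rfloor$, giving $\beta\le t\lfloor r/3\rfloor$ and $dim_s=rt-t\lfloor r/3\rfloor=t(r-\lfloor r/3\rfloor)$.

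The step I expect to be the main obstacle is fixing the strong resolving graph precisely: it hinges on the somewhat fussy distance formula and on checking, pair by pair, that moving along any incident edge of $C_r\times K_t$ does not strictly increase the distance. After that, the even case is the delicate one arithmetically, because whether $CR(r,\{1,r/2\})$ is bipartite — and hence the value of $\beta\big(CR(r,\{1,r/2\})\big)$ — depends on $r\bmod4$.
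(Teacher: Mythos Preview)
Your overall route---determine $(C_r\times K_t)_{SR}$ exactly and then compute its vertex cover number---is precisely the paper's, and your description of the strong resolving graph matches the paper's in every regime (you wrote the fibre as $\{i\}\times V(K_t)$ where you meant $V(C_r)\times\{u\}$, but the intent is clear). For $r\in\{4,5\}$ and for odd $r\ge 7$ your argument is complete; in the odd case the paper, after the same relabelling, instead exhibits an explicit vertex cover of size $t\bigl(r-\lfloor r/3\rfloor\bigr)$, whereas you bound $\beta$ directly by your run-counting inequality. Both routes are valid.

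Your caution about $r\equiv 0\pmod 4$ is not a gap waiting to be filled---it is exactly where the paper's own proof breaks. The paper asserts that
\[
A=\{(u_i,v_j):\ i\in\{0,2,4,\ldots,r-2\},\ j\in\{1,\ldots,t\}\}
\]
is a vertex cover of $(C_r\times K_t)_{SR}$, but when $r/2$ is even the edge joining $(u_1,v_1)$ to $(u_{1+r/2},v_2)$ has both endpoints with odd first coordinate and is not covered by $A$. In fact your fibre argument is tight here: $\beta\bigl((C_r\times K_t)_{SR}\bigr)=t\cdot\beta\bigl(CR(r,\{1,r/2\})\bigr)$, since the set $I^{\ast}\times V(K_t)$ realises equality. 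For $r\equiv 0\pmod 4$ the graph $CR(r,\{1,r/2\})$ is the M\"obius ladder $M_r$, which is non-bipartite and has independence number $r/2-1$ (one cannot select one vertex from each of the $r/2$ antipodal pairs while keeping the selection independent in $C_r$, as this would force a proper $2$-colouring of $C_{r/2}$ with equal endpoints; dropping a single pair does work). Consequently
\[
dim_s(C_r\times K_t)=rt-t\Bigl(\frac{r}{2}-1\Bigr)=t\Bigl(\frac{r}{2}+1\Bigr)
\]
for $r\equiv 0\pmod 4$ with $r\ge 8$, so the theorem as stated is incorrect in that range; your instinct that this residue class behaves differently was right.
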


\begin{proof}
Let $V_1=\{u_0,u_1,...,u_{r-1}\}$ and $V_2=\{v_1,v_2,....,v_{t}\}$ be the vertex sets of $C_r$ and $K_t$, respectively. We assume $C_r=u_0 u_1 u_2 \cdots u_{r-1} u_0$ in $C_r$. Hereafter all the operations with the subindex of a vertex $u_i$ of $C_r$ are expressed modulo $r$. Let $(u_i,v_j),(u_l,v_k)$ be two distinct vertices of $C_r\times K_t$.

\noindent{Case 1:} Let $r=4$ or $5$.

Subcase 1.1: $u_i=u_l$. Hence, $d_{C_r\times K_t}((u_i,v_j),(u_l,v_k))=2$. Since $(u_i,v_j)\sim (u_{i-1},v_k)$ if $k \ne j$and $d_{C_r\times K_t}((u_{i-1},v_k),(u_l,v_k))=3$ it follows that  $(u_i,v_j)$ and $(u_l,v_k)$ are not mutually maximally distant in $C_r\times K_t$.

Subcase 1.2: $v_j=v_k$. If $l=i+1$ or $i=l+1$, then without loss of generality we suppose $l=i+1$ and we have that $d_{C_r\times K_t}((u_i,v_j),(u_l,v_k))=3=D(C_r\times K_t)$. Thus, $(u_i,v_j)$ and $(u_l,v_k)$ are mutually maximally distant in $C_r\times K_t$. On the other hand, if $l\ne i+1$ and $i\ne l+1$, then $d_{C_r\times K_t}((u_i,v_j),(u_l,v_k))=2$. Since for every vertex $(u,v)\in N_{C_r\times K_t}(u_i,v_j)$ we have that $d_{C_r\times K_t}((u,v),(u_l,v_k))\le 2$ and also for every vertex $(u,v)\in N_{C_r\times K_t}(u_l,v_k)$ we have that $d_{C_r\times K_t}((u,v),(u_i,v_j))\le 2$, we obtain that $(u_i,v_j)$ and $(u_l,v_k)$ are mutually maximally distant in $C_r\times K_t$.

Subcase 1.3: $u_i\ne u_l$, $v_j\ne v_k$ and $(u_i,v_j)\sim (u_l,v_k)$. So, there exists a vertex $(u,v)\in N_{C_r\times K_t}(u_l,v_k)$ such that $d_{C_r\times K_t}((u,v),(u_i,v_j))=2$ and, as a consequence, $(u_i,v_j)$ and $(u_l,v_k)$ are not mutually maximally distant in $C_r\times K_t$.

Subcase 1.4: $u_i\ne u_l$, $v_j\ne v_k$ and $(u_i,v_j)\not\sim (u_l,v_k)$. Hence, $d_{C_r\times K_t}((u_i,v_j),(u_l,v_k))=2$. We can suppose, without loss of generality, that $l=i+2$. Since ($(u_i,v_j)\sim (u_{l-1},v_k)$ and $(u_l,v_k)\sim (u_{l-1},v_j)$) and also ($d_{C_r\times K_t}((u_i,v_j),(u_{l-1},v_j))=3$ and $d_{C_r\times K_t}((u_l,v_k),(u_{l-1},v_k)=3$), we obtain that $(u_i,v_j)$ and $(u_l,v_k)$ are not mutually maximally distant in $C_r\times K_t$.

Hence the strong resolving graph $(C_r\times K_t)_{SR}$ is isomorphic to $\bigcup_{i=1}^t K_r$. Thus, by Theorem \ref{lem_oellerman},

$$dim_s(C_r\times K_t)=\alpha((C_r\times K_t)_{SR})=\alpha\left(\bigcup_{i=1}^t K_r\right)=\sum_{i=1}^{t}\alpha(K_r)=t(r-1).$$

\noindent{Case 2:} $r\ge 6$. Let $(u_i,v_j),(u_l,v_k)$ be two different vertices of $C_r\times K_t$.

Subcase 2.1: $u_i=u_l$. As in Subcase 1.1 it can be shown that $(u_i,v_j),(u_l,v_k)$ are not mutually maximally distant.

Subcase 2.2: $v_j=v_k$. We consider the following further subcases.
\begin{enumerate}[(a)]
\item $l=i+1$ or $i=l+1$. Without loss of generality we suppose $l=i+1$. Hence, $d_{C_r\times K_t}((u_i,v_j),(u_l,v_k))=3$. Notice that $N_{C_r\times K_t}(u_i,v_j)=\{u_{i-1},u_{i+1}\}\times (V_2-\{v_j\})$ and $N_{C_r\times K_t}(u_l,v_k)=\{u_{i},u_{i+2}\}\times (V_2-\{v_k\})$. Thus, for every vertex $(u,v)\in N_{C_r\times K_t}(u_i,v_j)$ it follows that $d_{C_r\times K_t}((u,v),(u_l,v_k))\le 2$ and for every vertex $(u,v)\in N_{C_r\times K_t}(u_l,v_k)$ it follows that $d_{C_r\times K_t}((u,v),(u_i,v_j))\le 2$. Hence, $(u_i,v_j)$ and $(u_l,v_k)$ are mutually maximally distant in $C_r\times K_t$.

\item $l\ne i+1$, $i\ne l+1$ and $d_{C_r}(u_i,u_l)<D(C_r)$. So, $d_{C_r\times K_t}((u_i,v_j),(u_l,v_k))=\min\{l-i,i-l\}$. Since $(u_i,v_j)\sim (u_{i-1},v_q)$ with $q\ne j$ and $d_{C_r\times K_t}((u_{i-1},v_q),(u_l,v_k))=\min\{l-i+1,i-l+1\}$ we have that $(u_i,v_j)$ and $(u_l,v_k)$ are not mutually maximally distant in $C_r\times K_t$.
\item $l\ne i+1$, $i\ne l+1$ and $d_{C_r}(u_i,u_l)=D(C_r)$. Thus, $d_{C_r\times K_t}((u_i,v_j),(u_l,v_k))=\min\{l-i,i-l\}=D(C_r)=\left\lfloor\frac{r}{2}\right\rfloor$ and, as a consequence, we have that $(u_i,v_j)$ and $(u_l,v_k)$ are mutually maximally distant in $C_r\times K_t$.
\end{enumerate}

Subcase 2.3: $u_i\ne u_l$, $v_j\ne v_k$ and $d_{C_r}(u_i,u_l)<D(C_r)$. As in Subcase 2.2(b) it can be shown that $(u_i,v_j)$ and $(u_l,v_k)$ are not mutually maximally distant in $C_r\times K_t$.

Subcase 2.4: $u_i\ne u_l$, $v_j\ne v_k$ and $d_{C_r}(u_i,u_l)=D(C_r)$. As in Subcase 2.2(c) it can be shown that $(u_i,v_j)$ and $(u_l,v_k)$ are mutually maximally distant in $C_r\times K_t$.

From the above cases it follows that the strong resolving graph $(C_r\times K_t)_{SR}$ has vertex set $V_1\times V_2$ and two vertices $(u_i,v_j),(u_l,v_k)$ are adjacent in this graph if and only if either, ($\min\{l-i,i-l\}=1$ and $j=k$) or ($\min\{l-i,i-l\}=D(C_r)=\left\lfloor\frac{r}{2}\right\rfloor$ and $1\le j,k\le t$). Next we obtain the vertex cover number of $(C_r\times K_t)_{SR}$.

If $r$ is even, then every vertex $(u_i,v_j)$ has $t$ neighbors of type $(u_{i+r/2},v_l)$, $1\le l\le t$ and two neighbors $(u_{i-1},v_j),(u_{i+1},v_j)$. So, $\alpha((C_r\times K_t)_{SR})\ge t\alpha(C_r)=t\frac{r}{2}$. On the other hand, if we take the set of vertices $A=\{(u_i,v_j)\,:\,i\in \{0,2,4,...,r-2\},\;j\in \{1,...,t\}\}$, then every edge of $(C_r\times K_t)_{SR}$ is incident to some vertex of $A$. So, $A$ is a vertex cover and $\alpha((C_r\times K_t)_{SR})\le |A|= t\frac{r}{2}$. Hence $\alpha((C_r\times K_t)_{SR})=t\frac{r}{2}$. Therefore
$$dim_s(C_r\times K_t)=\alpha((C_r\times K_t)_{SR})=t\frac{r}{2}.$$

If $r$ is odd, then every vertex $(u_i,v_j)$ has $t$ neighbors of type $(u_{i+(r-1)/2},v_l)$, $t$ neighbors of type $(u_{i+r/2},v_l)$, $1\le l\le t$, and the two neighbors $(u_{i-1},v_j),(u_{i+1},v_j)$. Thus for every $k\in \{1,...,t\}$ it follows that
\begin{equation}\label{adjacencies}
(u_{0},v_k)\sim (u_{\frac{r-1}{2}},v_k)\sim (u_{r-1},v_k)\sim (u_{\frac{r-1}{2}-1},v_k)\sim (u_{r-2},v_k)\sim \cdots \sim (u_{1},v_k)\sim (u_{\frac{r+1}{2}},v_k)\sim (u_{0},v_k).
\end{equation}
Also, since $(u_{0},v_k)\sim (u_{1},v_k)\sim \cdots \sim (u_{r-1},v_k)\sim (u_{0},v_k)$, the graph $G'$ formed from $t$ disjoint copies of a circulant graph $CR(r,2)$ is a subgraph of $(C_r\times K_t)_{SR}$. By Lemma \ref{circulant-indep}
$$\alpha((C_r\times K_t)_{SR})\ge t\alpha(CR(r,2))=t(r-\beta(CR(r,2))=t\left(r-\left\lfloor\frac{r}{3}\right\rfloor\right).$$
Now, we will rename the vertices of $C_r$ according to the adjacencies in (\ref{adjacencies}), {\em i.e.}, $u'_0=u_0$, $u'_1=u_{\frac{r-1}{2}}$, $u'_2=u_{r-1}$, $u'_3=u_{\frac{r-1}{2}-1}$, $u'_2=u_{r-2}$, ..., $u'_{r-2}=u_1$ and $u'_{r-1}=u_{\frac{r+1}{2}}$. With this notation, we define a set $B$, of vertices of $(C_r\times K_t)_{SR}$, as follows:
\begin{itemize}
\item $B=\{(u'_i,v_j):\,i\in \{0,1,3,4,6,7,...,r-3,r-2\},\;j\in \{1,...,t\}\}$, if $r\equiv 0$ (mod $3$).
\item $B=\{(u'_i,v_j):\,i\in \{0,1,3,4,6,7,...,r-4,r-3,r-1\},\;j\in \{1,...,t\}\}$, if $r\equiv 1$ (mod $3$).
\item $B=\{(u'_i,v_j):\,i\in \{0,1,3,4,6,7,...,r-5,r-4,r-2,r-1\},\;j\in \{1,...,t\}\}$, if $r\equiv 2$ (mod $3$)
\end{itemize}
Note that if $(u,v),(x,y)\notin B$, then $(u,v)\not\sim (x,y)$ and, thus $B$ is a vertex cover of $(C_r\times K_t)_{SR}$. Hence,
$\alpha((C_r\times K_t)_{SR})\le |B|=t(r-\left\lfloor\frac{r}{3}\right\rfloor)$, which leads to $\alpha((C_r\times K_t)_{SR})=t(r-\left\lfloor\frac{r}{3}\right\rfloor)$.
Therefore, we have the following
$$dim_s(C_r\times K_t)=\alpha((C_r\times K_t)_{SR})=t\left(r-\left\lfloor\frac{r}{3}\right\rfloor\right).$$
\end{proof}

\begin{theorem}
For any positive integers $r\ge 2$ and $t\ge 3$,
$$dim_s(P_r\times K_t) = t\left \lceil \frac{r}{2}\right \rceil.$$
\end{theorem}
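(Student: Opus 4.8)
The plan is to follow the route used throughout the paper: pin down the strong resolving graph $(P_r\times K_t)_{SR}$ explicitly and then read off its vertex cover number via Theorem~\ref{lem_oellerman}. Write $V(P_r)=\{u_1,\dots,u_r\}$ with $P_r=u_1u_2\cdots u_r$ and $V(K_t)=\{v_1,\dots,v_t\}$, all arithmetic on the indices of the $u_i$ being ordinary integer arithmetic. The first task is the distance function. Using the description of distances in a direct product by pairs of equal-length walks in the two factors, together with the facts that (i) any two walks between two vertices of $P_r$ have length of the parity of their distance and every such length $\ge$ the distance occurs (here $r\ge2$), and (ii) in $K_t$ with $t\ge3$ a closed walk realises every length in $\{0\}\cup\{2,3,\dots\}$ while a walk between distinct vertices realises every positive length, one obtains, for distinct $(u_i,v_j)$ and $(u_l,v_k)$,
$$d_{P_r\times K_t}\big((u_i,v_j),(u_l,v_k)\big)=\begin{cases}3,& \text{if } j=k \text{ and } |i-l|=1,\\ 2,& \text{if } j\ne k \text{ and } i=l,\\ |i-l|,& \text{otherwise.}\end{cases}$$
The crucial feature is that two ``vertical neighbours'' $(u_i,v_j)$ and $(u_{i+1},v_j)$ are at distance $3$, not $1$.

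Next I would classify the mutually maximally distant pairs by checking, for each type of pair $\{(u_i,v_j),(u_l,v_k)\}$, whether some neighbour of one endpoint is strictly farther from the other endpoint. I expect: a pair with $i=l$ is never mutually maximally distant; a pair with $j=k$ and $|i-l|=1$ always is; a pair with $j=k$ and $|i-l|\ge2$ is mutually maximally distant precisely when $\{i,l\}=\{1,r\}$ (so only if $r\ge3$); a pair with $j\ne k$ and $|i-l|\in\{1,2\}$ never is; and a pair with $j\ne k$ and $|i-l|\ge3$ is mutually maximally distant precisely when $\{i,l\}=\{1,r\}$ (so only if $r\ge4$). It then follows that $\partial(P_r\times K_t)=V(P_r)\times V(K_t)$ and that $(P_r\times K_t)_{SR}$ is: for $r\in\{2,3\}$ the disjoint union $\bigcup_{j=1}^{t}K_r$, where for each fixed $j$ the vertices $(u_1,v_j),\dots,(u_r,v_j)$ form a clique; and for $r\ge4$ the union of the $t$ vertex-disjoint paths $(u_1,v_j)-(u_2,v_j)-\cdots-(u_r,v_j)$ together with every edge joining a ``left end'' $(u_1,v_j)$ to a ``right end'' $(u_r,v_k)$, so that the $t$ left ends and the $t$ right ends together induce a copy of $K_{t,t}$ while no two left ends (and no two right ends) are adjacent.

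The cover/independence computation is then short. For $r\in\{2,3\}$ we get $\alpha\big(\bigcup_{j=1}^{t}K_r\big)=t(r-1)=t\lceil r/2\rceil$ at once. For $r\ge4$ I would go through Gallai's theorem (Theorem~\ref{th_gallai}) and compute the independence number $\beta$ of $(P_r\times K_t)_{SR}$: on one hand the set of all even-coordinate vertices $\{(u_i,v_j):i\ \text{even}\}$ is independent, since it alternates along each path and contains no left end and hence no endpoint of a $K_{t,t}$-edge, giving $\beta\ge t\lfloor r/2\rfloor$; on the other hand an independent set cannot meet both the set of left ends and the set of right ends (these induce $K_{t,t}$), so if, say, it avoids all right ends then on the $j$-th path it is an independent set of the sub-path $(u_1,v_j)-\cdots-(u_{r-1},v_j)\cong P_{r-1}$, hence of size at most $t\lceil(r-1)/2\rceil=t\lfloor r/2\rfloor$. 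Thus $\beta=t\lfloor r/2\rfloor$, so $\alpha\big((P_r\times K_t)_{SR}\big)=rt-t\lfloor r/2\rfloor=t\lceil r/2\rceil$, and Theorem~\ref{lem_oellerman} gives $dim_s(P_r\times K_t)=t\lceil r/2\rceil$.

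The main obstacle is the second step, and inside it the distance formula: the parity phenomenon that puts vertical neighbours at distance $3$ is precisely what makes those pairs mutually maximally distant, and getting it wrong would corrupt everything downstream; after that one only has to patiently dispose of the handful of pair types. I would also flag that the split $r\le3$ versus $r\ge4$ is genuine --- the $K_{t,t}$ among the path-ends materialises only once walks of length $r-1$ of the appropriate parity exist --- and that K\"{o}nig--Egerv\'{a}ry cannot be applied uniformly, since for odd $r\ge5$ the graph $(P_r\times K_t)_{SR}$ contains an odd cycle and is not bipartite; the Gallai argument sidesteps this.
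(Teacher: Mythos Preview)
Your proposal is correct and follows essentially the same route as the paper: a case analysis of distances and mutually maximally distant pairs to pin down $(P_r\times K_t)_{SR}$, followed by a cover-number computation, with the only difference being that you compute $\beta$ and invoke Gallai where the paper bounds $\alpha$ directly via an explicit cover and the subgraph $\bigcup_{j=1}^{t}C_r$. One small wording slip to fix: in your lower bound for $\beta$ when $r$ is even, the even-index set \emph{does} contain the right ends $(u_r,v_j)$, so ``no endpoint of a $K_{t,t}$-edge'' is false as stated; the correct (and sufficient) observation is that every $K_{t,t}$-edge has its left endpoint outside the set, so no such edge lies inside it.
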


\begin{proof}
Let $V_1=\{u_1,u_2,...,u_r\}$ and $V_2=\{v_1,v_2,....,v_t\}$ be the vertex sets of $P_r$ and $K_t$, respectively. We assume $u_1\sim u_2\sim u_3\sim \cdots \sim u_r$ in $P_r$. If $r=2$, then a vertex $(u_i,v_j)$ in $P_2\times K_t$ is mutually maximally distant only with the vertex $(u_l, v_j)$, where $i\ne l$. So, $(P_2\times K_t)_{SR}\cong \bigcup_{m=1}^{t} K_2$. Thus, by Theorem \ref{lem_oellerman},
$$dim_s(P_2\times K_t)=\alpha((P_2\times K_t)_{SR})=\alpha\left(\bigcup_{i=1}^{t} K_2\right)=\sum_{i=1}^t\alpha(K_2)=t.$$

If $r=3$, then a vertex $(u_i,v_j)$ in $P_3\times K_t$ is mutually maximally distant only with those vertices $(u_l, v_j)$, where $i\ne l$. Thus, $(P_3\times K_t)_{SR}\cong \bigcup_{m=1}^{t} K_3$ and, by Theorem \ref{lem_oellerman},
$$dim_s(P_3\times K_t)=\alpha((P_3\times K_t)_{SR})=\alpha \left(\bigcup_{i=1}^{t} K_3\right)=\sum_{i=1}^t\alpha(K_3)=t\left \lceil \frac{r}{2}\right \rceil.$$

From now on we suppose $r\ge 4$. Let $(u_i,v_j),(u_l,v_k)$ be two different vertices of $P_r\times K_t$. We consider the following cases.

Case 1: $u_i=u_l$. Hence, it is satisfied that $d_{P_r\times K_t}((u_i,v_j),(u_l,v_k))=2$. If $i\ne 1$, then $(u_i,v_j)\sim (u_{i-1},v_k)$ and $d_{P_r\times K_t}((u_{i-1},v_k),(u_l,v_k))=3$. Also, if $i=1$, then $(u_i,v_j)\sim (u_{i+1},v_k)$ and $d_{P_r\times K_t}((u_{i+1},v_k),(u_l,v_k))=3$. Thus, $(u_i,v_j)$ and $(u_l,v_k)$ are not mutually maximally distant in $P_r\times K_t$.

Case 2: $v_j=v_k$ and, without loss of generality,  $i<l$. We have the following cases.
\begin{enumerate}[(a)]

\item If $u_i\sim u_l$ in $P_r$, then $d_{P_r\times K_t}((u_i,v_j),(u_l,v_k))=3$. Let $(u_a,v_b)$ be a vertex such that $(u_i,v_j)\sim (u_a,v_b)$. So, ($a=i-1$ or $a=l$) and $b\ne j$. Thus, for every $(u_a,v_b)$ we have that $d_{P_r\times K_t}((u_a,v_b),(u_l,v_k))=2<3=d_{P_r\times K_t}((u_i,v_j),(u_l,v_k))$. Now, let $(u_c,v_d)$ be a vertex such that $(u_l,v_k)\sim (u_c,v_d)$. So, ($c=i$ or $c=l+1$) and $d\ne j$. Thus, for every $(u_c,v_d)$ we have that $d_{P_r\times K_t}((u_c,v_d),(u_i,v_j))=2<3=d_{P_r\times K_t}((u_i,v_j),(u_l,v_k))$. Therefore, $(u_i,v_j)$ and $(u_l,v_k)$ are mutually maximally distant in $P_r\times K_t$.

\item If $u_i\not\sim u_l$ in $P_r$, then $d_{P_r\times K_t}((u_i,v_j),(u_l,v_k))=|i-l|$. Now, if $u_i\ne u_1$, then for every vertex $(u_{i-1},v_p)$, $p\ne j$, we have that $(u_i,v_j)\sim (u_{i-1},v_p)$ and $d_{P_r\times K_t}((u_{i-1},v_p),(u_l,v_k))=|i-l+1|$. Similarly, if $u_l\not \sim u_r$, then for every vertex $(u_{l+1},v_p)$, $p\ne j$, we have that $(u_l,v_k)\sim (u_{l+1},v_p)$ and $d_{P_r\times K_t}((u_{l+1},v_p),(u_i,v_j))=|i-l+1|$. Thus, we obtain that $(u_i,v_j)$ and $(u_l,v_k)$ are not mutually maximally distant in $P_r\times K_t$.

\item If $u_i=u_1$ and $u_l=u_r$, then $d_{P_r\times K_t}((u_i,v_j),(u_l,v_k))=r-1=D(P_r\times K_t)$. Thus, $(u_i,v_j)$ and $(u_l,v_k)$ are mutually maximally distant in $P_r\times K_t$.
\end{enumerate}

Case 3: $u_i\ne u_l$, $v_j\ne v_k$ and we consider, without loss of generality,  $i<l$. If $u_i\ne u_1$ or $u_l\ne u_r$, then as in Case 2 (b) it follows that $(u_i,v_j)$ and $(u_l,v_k)$ are not mutually maximally distant in $P_r\times K_t$. On the other hand, if $u_i=u_1$ and $u_l=u_r$, then as in Case 2 (c) it follows that $(u_i,v_j)$ and $(u_l,v_k)$ are mutually maximally distant in $P_r\times K_t$.

Therefore, $(P_r\times K_t)_{SR}$ is isomorphic to a graph with vertex set $V_1\times V_2$ and such that two vertices $(u_i,v_j),(u_l,v_k)$ are adjacent if and only if either, ($|l-i|=1$ and $j=k$) or ($|l-i|=r-1$ and $1\le j,k\le r$). Notice that every vertex $(u_i,v_j)$, where $1<i<r$, has only two neighbors $(u_{i-1},v_j)$ and $(u_{i+1},v_j)$, while every vertex $(u_1,u_j)$ has a neighbor $(u_2,u_j)$ and $r$ neighbors of type $(u_r,u_l)$, $1\le l\le t$. Also, every vertex $(u_r,u_j)$ has a neighbor $(u_{r-1},u_j)$ and $r$ neighbors of type $(u_1,u_l)$, $1\le l\le t$. So, $(P_r\times K_t)_{SR}$ has a subgraph $G'$ isomorphic to the disjoint union of $t$ cycles of order $r$ and, as a consequence, $\alpha((P_r\times K_t)_{SR})\ge t\alpha(C_r)=t\left \lceil \frac{r}{2}\right \rceil$.

On the other hand, let $r$ be an even number. If we take the set of vertices $A=\{(u_i,v_j)\,:\,i\in \{1,3,5,...,r-1\},\;j\in \{1,...,t\}\}$, then every edge of $(P_r\times K_t)_{SR}$ is incident to some vertex of $A$. Thus, $A$ is a vertex cover of $(P_r\times K_t)_{SR}$ and we have that $\alpha((P_r\times K_t)_{SR})\le |A|=t\left \lceil \frac{r}{2}\right \rceil$. Now, suppose $r$ odd. If we take the set of vertices $B=\{(u_i,v_j)\,:\,i\in \{1,3,5,...,r\},\;j\in \{1,...,t\}\}$, then every edge of $(P_r\times K_t)_{SR}$ is incident to some vertex of $B$. So, $B$ is a vertex cover of $(P_r\times K_t)_{SR}$ and thus $\alpha((P_r\times K_t)_{SR})\le |B|=t\left \lceil \frac{r}{2}\right \rceil$. Hence $\alpha((P_r\times K_t)_{SR})=t\left \lceil \frac{r}{2}\right \rceil$. Therefore, from Theorem \ref{lem_oellerman},
$$dim_s(P_r\times K_t)=\alpha((P_r\times K_t)_{SR})=t\left \lceil \frac{r}{2}\right \rceil.$$
\end{proof}


\begin{thebibliography}{99}

\bibitem{bres-klav-tepeh} B. Bre\v{s}ar, S. Klav\v{z}ar, and A. Tepeh Horvat, On the geodetic number and related metric sets in Cartesian product graphs, {\it Discrete Mathematics} {\bf 308} (2008) 5555--5561.

\bibitem{brigham} R. C. Brigham, G. Chartrand, R. D. Dutton, and P. Zhang, Resolving domination in graphs, {\it Mathematica Bohemica} {\bf 128} (1) (2003) 25--36.


\bibitem{pelayo1} J. C\'aceres, C. Hernando, M. Mora, I. M. Pelayo, M. L. Puertas, C. Seara, and D. R. Wood, On the metric dimension of Cartesian product of graphs, {\it SIAM Journal on Discrete Mathematics} {\bf 21} (2) (2007) 273--302.


\bibitem{chappell} G. Chappell, J. Gimbel, and C. Hartman, Bounds on the metric and partition dimensions of a graph, {\it Ars Combinatoria} {\bf 88} (2008) 349--366.

\bibitem{chartrand} G. Chartrand, L. Eroh, M. A. Johnson, and O. R. Oellermann, Resolvability in graphs and the metric dimension of a graph, {\it Discrete Applied Mathematics} {\bf 105} (2000) 99--113.

\bibitem{chartrand1} G. Chartrand, C. Poisson, and P. Zhang, Resolvability and the upper dimension of graphs, {\it Computers and Mathematics with Applications} {\bf 39} (2000) 19--28.

\bibitem{chartrand2} G. Chartrand, E. Salehi, and P. Zhang, The partition dimension of a graph, {\it  Aequationes Mathematicae} (1-2) \textbf{59} (2000) 45--54.

\bibitem{fehr} M. Fehr, S. Gosselin, and O. R. Oellermann, The partition dimension of Cayley digraphs, {\it Aequationes Mathematicae} {\bf 71} (2006) 1--18.

\bibitem{harary} F. Harary and R. A. Melter, On the metric dimension of a graph, {\it Ars Combinatoria} {\bf 2} (1976) 191--195.

\bibitem{haynes} T. W. Haynes, M. Henning, and J. Howard, Locating and total dominating sets in trees, {\it Discrete Applied Mathematics} {\bf 154} (2006) 1293--1300.


\bibitem{Jha} P. K. Jha and G. Slutzki, Independence numbers of product graphs,
\emph{Applied Mathematics Letters }\textbf{7} (4) (1994) 91--94.

\bibitem{pharmacy1} M. A. Johnson, Structure-activity maps for visualizing the graph variables arising in drug design, {\it Journal of Biopharmaceutical Statistics} {\bf 3} (1993) 203--236.

\bibitem{pharmacy2} M. A. Johnson, Browsable structure-activity datasets, {\it Advances in Molecular Similarity} (R. Carb\'o--Dorca and P. Mezey, eds.) JAI Press Connecticut (1998) 153--170.

\bibitem{landmarks} S. Khuller, B. Raghavachari, and A. Rosenfeld, Landmarks in graphs, {\it Discrete Applied Mathematics} {\bf 70} (1996) 217--229.

\bibitem{strong-hamming} J. Kratica, V. Kova\v{c}evi\'c-Vuj\v{c}i\'c, M. \v{C}angalovi\'c, and M. Stojanovi\'c, Minimal doubly resolving sets and the strong metric dimension of Hamming graphs, \emph{Applicable Analysis and Discrete Mathematics} \textbf{6} (2012) 63--71.


\bibitem{may-oellermann} T. R. May and O.R. Oellermann, The strong dimension of distance-hereditary graphs, {\em JCMCC} {\bf 76} (2011) 59--73.

\bibitem{Tomescu1} R. A. Melter and  I. Tomescu, Metric bases in digital geometry, {\it Computer Vision  Graphics and Image Processing} {\bf 25} (1984) 113--121.

\bibitem{direct-cart-isom} D. J. Miller, The categorical product of graphs, {\em Canadian Journal of Mathematics} {\bf 20} (1968) 1511--1521.

\bibitem{Oellermann} O. R. Oellermann and  J. Peters-Fransen, The strong metric dimension of graphs and digraphs, {\it Discrete Applied Mathematics} {\bf 155} (2007) 356--364.

\bibitem{LocalMetric} F. Okamoto,   B. Phinezyn, and  P. Zhang,
The local metric dimension of a graph. \emph{ Mathematica Bohemica}  \textbf{135} (3) (2010)   239--255.

\bibitem{survey} V. Saenpholphat and P. Zhang, Conditional resolvability in graphs: a survey, {\it International Journal of Mathematics and Mathematical Sciences} {\bf 38} (2004) 1997--2017.

\bibitem{seb} A. Seb\H{o} and E. Tannier, On metric generators of graphs, {\it Mathematics of Operations Research} {\bf 29} (2) (2004) 383--393.

\bibitem{leaves-trees} P. J. Slater, Leaves of trees, Proceeding of the 6th Southeastern Conference on Combinatorics, Graph Theory, and Computing, {\it Congressus Numerantium} {\bf 14} (1975) 549--559.

\bibitem{slater2} P.J. Slater, Dominating and reference sets in a graph, {\it Journal of Mathematical and Physical Sciences} {\bf 22} (4) (1988) 445--455.

\bibitem{spacapan} S. \v{S}pacapan, The $k$-independence number of direct products of graphs and Hedetniemi's conjecture, \emph{European Journal of Combinatorics} \textbf{32} (2011) 1377--1383.

\bibitem{tomescu} I. Tomescu, Discrepancies between metric and partition dimension of a connected graph, {\it Discrete Mathematics} {\bf 308} (2008) 5026--5031.

\bibitem{valencia} M. Valencia-Pabon and J. Vera, Independence and coloring properties of direct products of some vertex-transitive graphs, {\it Discrete Mathematics} {\bf 306} (2006) 2275--2281.

\bibitem{Vizing1963} V. G. Vizing, The Cartesian product of graphs (Russian), \emph{Vy\v{c}islitel'nye Sistemy} \textbf{9} (1963) 30--43.

\bibitem{yerocartpartres} I. G. Yero and J. A. Rodr\'{\i}guez-Vel\'{a}zquez, A note on the partition dimension of Cartesian product graphs, {\it Applied Mathematics and Computation} {\bf 217} (7) (2010) 3571--3574.

\bibitem{CMWA}    I. G. Yero, D. Kuziak, and J. A. Rodr\'{\i}guez-Vel\'{a}zquez, On the metric dimension of corona product graphs, \emph{Computers} \& \emph{Mathematics with Applications } \textbf{61} (9) (2011) 2793--2798.

\bibitem{vertex-transitive} H. Zhang, Independent sets in direct products of vertex-transitive graphs. Manuscript. {\em http://arxiv.org/pdf/1007.0797v1.pdf}

\end{thebibliography}
\end{document}